\theoremstyle{plain}
\newtheorem{thm}{\protect\theoremname}
\theoremstyle{remark}
\newtheorem{rem}{\protect\remarkname}
\theoremstyle{plain}
\newtheorem{lem}{\protect\lemmaname}
\theoremstyle{plain}
\newtheorem{Example}{\protect\lemmaname}
\journal{}
\begin{document}


\newtheorem{The}{Theorem}[section]
\newtheorem{lemma}{Lemma}[section]
\newtheorem{Remark}{Remark}[section]
\newtheorem{Assumption}{Assumption}[section]
\newtheorem{Definition}{Definition}[section]

\renewcommand{\baselinestretch}{1.25}
\def\a{\alpha}
\def\b{\beta}
\def\g{\gamma}
\def\d{\delta}
\def\D{\Delta}
\def\v{\varphi}
\def\l{\lambda}
\def\m{\mathcal}

\def\o{\omega}
\def\O{\Omega}
\def\s{\sigma}
\def\e{\eta}
\def\n{\nabla}
\def\t{\tilde}

\def\no{{\nonumber}}
\def\r{\rightarrow}
\def\p{\partial}
\def\f{\frac}
\def\div{\mbox{div}}

\begin{frontmatter}

\title{A fully discretization, unconditionally energy stable finite element method solving the thermodynamically consistent  diffuse interface model for incompressible two-phase MHD flows with large density ratios\tnoteref{t1}}


\author{Ke Zhang}
\ead{zhangkemath@139.com/zkmath@stu.xju.edu.cn}
\author{Haiyan Su\corref{cor1}}
\ead{shymath@126.com }
\cortext[cor1]{Corresponding author. Tel./fax number: 86 9918582482.}


\begin{abstract}

A diffusion interface two-phase magnetohydrodynamic model has been used  for matched densities  in our previous work \cite{2022Highly, 2023Energy}, which may limit the applications of the model. In this work, we derive a thermodynamically consistent diffuse interface model for diffusion interface two-phase magnetohydrodynamic fluids with large density ratios by Onsager's variational principle and conservation law for the first time. The finite element method for spatial discretization and the first order semi-implicit scheme linked with convect splitting method for temporal discretization, is proposed to solve this new model. The mass conservation, unconditionally energy stability and convergence  of the scheme can be proved.  Then  we derive the existence of  weak solutions of governing system employing the above properties of the scheme and compactness
method. Finally, we show some numerical results to test the effectiveness and well behavior of proposed scheme.

\textbf{Keywords: Diffuse interface CH-MHD model; Large density ratios;  Multiphase flow;  Existence of weak solutions }
\end{abstract}

\end{frontmatter}

\section{Introduction}
The change in interface topology of immiscible or partially miscible fluids is a fundamental issue in fluid dynamics. Topological transformations such as splintering and reconnection of fluid interfaces are important features of many systems and have a strong effect on flow, see for instance \cite{1998Quasi, David1999Calculation, 2006Fully}. The recently developed diffusion interface model is proved to be an appropriate method to describe this phenomenon. This method allows mixing of fluids in the interface region, see for instance \cite{ABELS2012THERMODYNAMICALLY, 2016Diffuse, 2012Thermodynamically}. When the interface thickness tends to 0, diffused interface model converges to the sharp interface model, see for instance \cite{2021SharpZHANG, 2003ALIU, 1994ConvergenceALI}.  It is worth mentioning that the study of diffuse interface two-phase magnetohydrodynamic (CH-MHD) system in which two incompressible and insoluble conductive fluids interact with electromagnetic fields has been widely used in geophysical fluid dynamics, energy metallurgy, petroleum industry, chemical industry and other fields related to multiphase flow in \cite{1964EFFECTDJ, 2017Second, 2018EntropyEN, 2022Highly, 2023Energy}.


A two-phase diffusion interface system with matched densities can be found in  \cite{1977TheoryHO, 1996TwoGU}. In addition, the existence of weak solutions of the two-phase diffusion interface about singular free energy densities with matched densities is builded in \cite{2009OnABELS}. Immediately after, Abels et al. unveil  two-phase diffusion interface fluids based on sharp interface system, which has the existence of weak solutions, more details refer  to \cite{2009Existence}. When the densities are different, there are also  some works that can be used as a reference. A physically consistent two-phase incompressible system with different density and viscosity that admits an energy law, which is proposed in  \cite{2010ASHEN}. When small density ratios,  the Boussinesq  approximation is introduced to give the unmatched model in above work. The similar process also found in \cite{2015DecoupledSHEN}. And several  numerical tests are carried out to validate the accuracy for problems with large density and viscosity ratios in  \cite{2010ASHEN}.  The applicability  for two-phase incompressible fluid flows with large viscosity and density ratios are contrasted in \cite{2007DiffuseJH}, but it is uncertain whether the model is thermodynamic consistent.  The new two-phase diffuse interface system with unmatched densities based on rational continuum mechanics is introduced in \cite{2012Thermodynamically}, and derived the sharp interface model when the interfacial thickness tends to 0. Abels et al. consider the existence of weak solutions globally of two-phase incompressible flow with different densities in \cite{2019Existence}. The well-posedness of the  Cahn-Hilliard-Navier-Stokes (CH-NS) system can be focused in \cite{1999MathematicalBOYER}, which stands for a model of a multiphase fluid under shear. And the incompressible CH-NS model with unmatched density and give the local existence of a unique very regular solution  in \cite{2001Nonhomogeneous}. And the well-posedness of diffuse interface Cahn-Hilliard-Stokes-Darcy model is derived in \cite{2014ExistenceHAN}. In addition, the two-phase ferrofluid  system by phase field method is proposed in  \cite{2016A}, which  give the  stability, convergence, and existence of solutions. And the  diffuse interface CH-MHD system with different viscosities and electric conductivities is shown in \cite{2019A}, the existence of weak solutions is established.


And the two-phase flow problem is the basis for considering multiple physical field models. For numerical schemes, the fully decoupled, energy stable scheme for solving two-phase flows are proposed in \cite{2010ASHEN,MR3299207, 2021AMI, 2022ALIU, MR4483330, doishen,2022HighlyY, 2021A}. And the algorithms and analysis are focused in  \cite{2019AZHANG, MR4411485,  zhang2021efficient, MR4634319, MR4366278,MR3786791, 2018A, 2010A,2009A,MR3028356,1991On}, to handling the MHD related models. For the CH-MHD problem can be traced to \cite{1964EFFECT}, the first order and unconditionally energy stable scheme is presented in \cite{2019A}. And the magnetic field is considered in $H(curl, \Omega)$ refer to \cite{2022Highly}, which the scheme also fully decoupled and energy stable. The second order and unconditionally energy stable schemes are shown in \cite{MR4243525, 2010Advanced, Goedecke2004Principles, 2020Second}. The considered CH-MHD model mainly with matched densities in above works. In actually life, the density of the two fluids is different, may large density ratios or small density ratios. Often these diffuse interface models (also called phase field models) allow for local entropy or free energy inequalities and in such situations they can be called thermodynamically consistent. The thermodynamically consistent diffuse interface model for incompressible CH-MHD flows with unmatched densities (allow large density ratios) and it's reliability analysis have not been considered, which is the mainly work in this paper. To complete this idea, we mainly take the following steps.


The first objective is to establish a diffusion interface model conforming to the thermodynamic consistency, which is based on Onsager's variational principle. The new system is coupled by incompressible Navier-Stokes system with unmatched densities, Cahn-Hilliard system and Maxwell system by convection, stresses, generalized Ohm's law and Lorentz forces. The second objective is to  design a fully discrete finite element method with  first order semi-implicit scheme in temporal discretization for the model, which has a mass conservation, unconditional energy stability. And the existence of solutions to the numerical method is based on Leray-Schauder fixed point theorem. The third objective is to prove the existence to the weak solution of the model is based on stability of the proposed numerical method and the compactness method, which exist subsequence of the fully discrete solutions converges to the weak solution of the model variation as the space size $h$ and time size $\Delta t$ tend to zero. The forth objective is to verify the theoretical results and validate of the  presented algorithm by the several numerical simulations.

The article consists of the following parts.  In Section 2, we present the preliminaries, the new model derivation and original energy are shown.  Fully discrete energy stable finite element method and it's unconditionally energy stability can be proposed in Section 3. In addition, convergence of the numerical scheme and existence of the weak solution are displayed in Section 4. And several numerical examples are tests in Section 5.
\section{Model derivation and original  energy}
In this section, we focus on the derivation of incompressible diffuse interface  CH-MHD system with large density ratios based on the Onsager's variational principle in  \cite{1931Reciprocal, Onsager1931Reciprocal, 1953Fluctuations}. And the established model satisfies thermodynamic consistency. For simplicity, we give the following notations.
\subsection{Preliminaries}
Firstly, we set convex, bounded region $\Omega\subset R^{d}$, d=2, 3. The $\textbf{L}^{2}$ inner product: $(\textbf{a}, \textbf{b})=\int_{\Omega}\textbf{a}\cdot \textbf{b} \mbox{d}\textbf{x}$ for   $\textbf{a}, \textbf{b}$ be vector functions. Considering $\textbf{L}^{2}$ norm is $\|\textbf{a}\| $=$(\textbf{a}, \textbf{a})^{1/2}$. And  $L^{s}(0, T; W)$, for $1\leq s\leq \infty$, the norm can be defined as $\|\cdot\|_{L^{\infty}(W)}$:=ess$\sup\limits_{0\leq t\leq T}\|\cdot\|_{W}$ and $\|\cdot\|_{L^{s}(W)}$:=$(\int_{0}^{T}\|\cdot\|_{W}^{s} dt )^{\frac{1}{s}}$ for  $W$ is a real Banach space with the norm $\|\cdot\|_{W}$. In addition, we give the following standard Sobolev spaces as follows
\begin{subequations}
\begin{align}
&H^{1}(\Omega)=\{\psi \in L^{2}(\Omega),\, \nabla\psi \in \textbf{L}^{2}(\Omega) \},\\
&\textbf{H}_{0}^{1}(\Omega)=\{\textbf{v}\in  H^{1}(\Omega)^{d},\, \textbf{v}|_{\partial\Omega}=\textbf{0} \},\\
& L_{0}^{2}(\Omega)=\{p\in L^{2}(\Omega),\, \int_{\Omega}p\mbox{d}\textbf{x}=0\},\\
&\textbf{H}_{\tau}^{1}(\Omega)=\{\textbf{m }\in H^{1}(\Omega)^{d},\, \nabla\cdot\textbf{m }=0,\, \textbf{n}\times \textbf{m}=\textbf{0} \},\\
&\textbf{H}_{n}^{1}(\Omega)=\{\textbf{B }\in H^{1}(\Omega)^{d},\,   \textbf{B}\cdot\textbf{n} =\textbf{0} \}.
\end{align}
\end{subequations}

Based on the  Poincare  inequalities and embedding inequalities in \cite{1975Sobolev, Jean2006Mathematical, 1986On, 2019A}, there are estimates as
\begin{subequations}
\begin{align}
&C_{1}\|\nabla \textbf{v}\|\leq\|D(\textbf{v})\|\leq \|\nabla \textbf{v}\|, \quad \forall \textbf{v}\in \textbf{H}_{0}^{1},\\
&\| \textbf{v}\|_{L^{q}}\leq C_{2}\|\nabla \textbf{v}\|, \quad  \forall \textbf{v}\in \textbf{H}_{0}^{1},\, 2\leq q\leq 6,\\
&\| \textbf{u}\|_{L^{3}}\leq C_{3}\| \textbf{u}\|^{\frac{6-d}{6}}\|\nabla \textbf{u}\|^{\frac{d}{6}},\quad \forall \textbf{u}\in \textbf{H}_{0}^{1}, \\
&\| \textbf{u}\|_{L^{4}}\leq C_{4}\| \textbf{u}\|^{\frac{4-d}{4}}\|\nabla \textbf{u}\|^{\frac{d}{4}},\quad \forall \textbf{u}\in \textbf{H}_{0}^{1},\\
&\| \textbf{B}\|_{L^{p}}\leq C_{5} \| \textbf{B}\|_{\textbf{H}^{1}_{n}},\quad \forall \textbf{B}\in \textbf{H}_{n}^{1},\, 2\leq p\leq 6,\\
&\| \textbf{B}\|_{L^{3}}\leq C_{6}\| \textbf{B}\|^{\frac{6-d}{6}}\|\textbf{B}\|^{\frac{d}{6}}_{\textbf{H}^{1}_{n}},\quad \forall \textbf{B}\in \textbf{H}_{n}^{1},\\
&\| \textbf{B}\|_{L^{4}}\leq C_{7}\| \textbf{B}\|^{\frac{4-d}{4}}\|\textbf{B}\|^{\frac{d}{4}}_{\textbf{H}^{1}_{n}},\quad \forall \textbf{B}\in \textbf{H}_{n}^{1},\\
&\|\psi\|_{L^{q}}\leq C_{8}\|\psi\|_{H^{1}},\quad \forall \psi\in H^{1},\, 2\leq q \leq 6,
\end{align}
\end{subequations}
here  $C_{i}$, $i$=1, $\cdots$, 8 are positive constants, and  $\| \textbf{B}\|_{\textbf{H}^{1}_{n}}$ can be defined as $(\|\nabla\times \textbf{B}\|^{2}+\|\nabla\cdot \textbf{B}\|^{2})^{1/2}$.
And there are the summarize common vector operations
\begin{subequations}
\begin{align}
&\nabla\cdot(\textbf{A}\textbf{D})=\textbf{A}\nabla\cdot \textbf{D}+\nabla \textbf{A}\cdot \textbf{D}, \label{2.1a}\\
&\textbf{A}\otimes \textbf{D} =\textbf{A} (\textbf{D})^{T}.
\end{align}
\end{subequations}

And  we need to introduce some basic equations
\begin{equation}\label{e-1}
\begin{aligned}
&  (A-D, A)=\frac{1}{2}(\|A\|^{2}- \|D\|^{2}+\|A-D\|^{2}   ), \\
& (A^{3}-D)(A-D)=\frac{1}{4}[ (A^{2}-1)^{2}-(D^{2}-1)^{2}   ]+\frac{1}{4}(A^{2}-D^{2})^{2}+\frac{1}{2}A^{2}(A-D)^{2}+\frac{1}{2}(A-D)^{2},\\
& \theta(A^{3}-B)(A-\theta B)=\theta  (A^{3}-\theta B)(A-\theta B)-(1-\theta)(A-\theta B)\theta B.
\end{aligned}
\end{equation}

Inspired by Section 4 of \cite{1975Sobolev}, we show the following interpolation inequality. If $1\leq p\leq r\leq q\leq \infty$, which satisfy $\frac{1}{r}=\frac{\varsigma}{p}+\frac{1-\varsigma}{q}$, such that $\textbf{L}^{p}\cap \textbf{L}^{q}\hookrightarrow \textbf{L}^{r}$ and $\forall \textbf{u}\in \textbf{L}^{p}\cap \textbf{L}^{q}$ satisfy the following inequality
\begin{equation}\label{inter}
\|\textbf{u}\|_{\textbf{L}^{r}}\leq \|\textbf{u}\|_{\textbf{L}^{p}}^{\varsigma}\|\textbf{u}\|_{\textbf{L}^{q}}^{1-\varsigma}, \quad  \varsigma=[0, 1].
\end{equation}

\begin{lemma}\label{lax}
(\textbf{Lax-Milgram Theorem}) See for instance \cite{2008}, given a Hilbert space (\textbf{V}, ($\cdot,\cdot$)), a continuous, coercive bilinear form a($\cdot,\cdot$) and a continuous linear functional $\textbf{F}\in \textbf{V}'$, there exists a unique $\textbf{u}\in \textbf{V}$ such that
\begin{equation*}
a(\textbf{u},\textbf{v})=\textbf{F}(\textbf{v}), \quad \forall \textbf{v}\in \textbf{V}.
\end{equation*}
\end{lemma}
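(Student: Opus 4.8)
The plan is to recast the variational equation as a single operator equation on $\textbf{V}$ and then use coercivity to invert that operator. First I would apply the Riesz Representation Theorem twice. For each fixed $\textbf{u}\in\textbf{V}$ the map $\textbf{v}\mapsto a(\textbf{u},\textbf{v})$ is, by continuity of $a$, a bounded linear functional on $\textbf{V}$, so there is a unique element $A\textbf{u}\in\textbf{V}$ with $a(\textbf{u},\textbf{v})=(A\textbf{u},\textbf{v})$ for every $\textbf{v}\in\textbf{V}$; bilinearity and boundedness of $a$ make $A:\textbf{V}\to\textbf{V}$ a bounded linear operator. Similarly, $\textbf{F}$ is represented as $\textbf{F}(\textbf{v})=(\textbf{f},\textbf{v})$ for a unique $\textbf{f}\in\textbf{V}$. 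The problem $a(\textbf{u},\textbf{v})=\textbf{F}(\textbf{v})$ for all $\textbf{v}$ is then equivalent to $A\textbf{u}=\textbf{f}$, so it suffices to prove that $A$ is a bijection of $\textbf{V}$.

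The central estimate is furnished by coercivity. Writing $\alpha>0$ for the coercivity constant, I would note
\[
\alpha\|\textbf{u}\|^{2}\le a(\textbf{u},\textbf{u})=(A\textbf{u},\textbf{u})\le\|A\textbf{u}\|\,\|\textbf{u}\|,
\]
so that $\|A\textbf{u}\|\ge\alpha\|\textbf{u}\|$ for all $\textbf{u}$. This lower bound gives injectivity of $A$ at once, and it also forces the range $\mathcal{R}(A)$ to be closed: if $A\textbf{u}_{n}$ is Cauchy, then $\{\textbf{u}_{n}\}$ is Cauchy by the same inequality, so $\textbf{u}_{n}\to\textbf{u}$ and, by continuity of $A$, $A\textbf{u}_{n}\to A\textbf{u}\in\mathcal{R}(A)$.

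For surjectivity I would use the orthogonal decomposition $\textbf{V}=\mathcal{R}(A)\oplus\mathcal{R}(A)^{\perp}$, valid because $\mathcal{R}(A)$ is closed. If $\textbf{w}\in\mathcal{R}(A)^{\perp}$, then taking $\textbf{u}=\textbf{w}$ in $(A\textbf{u},\textbf{w})=0$ yields $(A\textbf{w},\textbf{w})=a(\textbf{w},\textbf{w})=0$, whereupon coercivity gives $\alpha\|\textbf{w}\|^{2}\le 0$ and hence $\textbf{w}=\textbf{0}$. Thus $\mathcal{R}(A)^{\perp}=\{\textbf{0}\}$, so $\mathcal{R}(A)=\textbf{V}$ and $A$ is onto. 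Consequently $\textbf{u}=A^{-1}\textbf{f}$ is the unique solution, and the estimate $\|A\textbf{u}\|\ge\alpha\|\textbf{u}\|$ also delivers the stability bound $\|\textbf{u}\|\le\alpha^{-1}\|\textbf{F}\|_{\textbf{V}'}$.

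I expect the surjectivity step to be the only point requiring real care, since it is essential that $A$ take values in $\textbf{V}$ itself rather than in the dual $\textbf{V}'$, which is precisely what the Riesz identification in the first step secures and what makes the orthogonality argument legitimate. As an alternative that avoids the orthogonality argument, I could instead apply the Banach fixed-point theorem to the map $T_{\rho}\textbf{v}=\textbf{v}-\rho\,(A\textbf{v}-\textbf{f})$ and choose $\rho>0$ small enough, using boundedness and coercivity of $A$, that $T_{\rho}$ becomes a contraction; its unique fixed point is then the sought solution. Either route is classical, so the work is essentially careful bookkeeping rather than the surmounting of a serious obstacle.
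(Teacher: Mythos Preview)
Your proof is correct and follows the standard Riesz-representation route to Lax--Milgram; the paper, however, does not prove this lemma at all but merely cites it from a reference, so there is no approach to compare against. Your argument stands on its own and is more than the paper provides.
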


\subsection{Derivation of the model}
From the point of mixed energy view, the governing system is coupled by Cahn-Hilliard system for the phase field and chemical potential, single-phase MHD system for velocity field, pressure field, and magnetic field. Unlike previous diffusion interface CH-MHD with matched densities \cite{2022Highly, 2023Energy}, we mainly consider large density ratios problem, so it is uncertain whether the new model will produce coupled terms. The Onsager's variational principle and conservation laws will give results as follows. The total  energy consists of three parts
 \begin{equation}\label{17}
\begin{aligned}
E&=E_{\phi}+E_{u}+E_{B}\\
&=\gamma\int_{\Omega}\left(\frac{\varepsilon}{2}|\nabla\phi|^{2}+\frac{1}{\varepsilon}F(\phi)\right)\mbox{d}\textbf{x}+\frac{1}{2}\int_{\Omega}\rho (\phi) |\textbf{u}|^{2}\mbox{d}\textbf{x}+\frac{1}{2}\int_{\Omega}\frac{|\textbf{B}|^{2}}{\mu}\mbox{d}\textbf{x}.
\end{aligned}
\end{equation}
$\bullet$ The mixed energy $E_{\phi}$ is defined by Helmtoltz free energy as
\begin{equation*} E_{\phi}=\gamma\int_{\Omega}\left(\frac{\varepsilon}{2}|\nabla\phi|^{2}+\frac{1}{\varepsilon}F(\phi)\right)\mbox{d}\textbf{x},
\end{equation*}
where  $\gamma$ and $\varepsilon$ stand for surface tension, and interfacial thickness respectively.  According to classical self consistent mean field theory in statistical physics in \cite{2001Principles}, mixed energy $E_{\phi}$ is defined by  Helmholtz free energy functional. A  mixture of two immiscible, incompressible fluids, and assume that the immiscible fluid is partially mixed at the interface junction, see for instance \cite{2012Thermodynamically, 2009On}. The phase field  is recommended to label two fluids
\begin{equation}\label{2-1}
\phi (x, t)=\left\{
\begin{aligned}
-1, \qquad \rm fluid\, I,\\
1, \qquad \rm fluid\, II.
\end{aligned}
\right.
\end{equation}
The phase field $\phi$ is almost constants in bulk regions and smoothly transitions between these values in an interfacial region of thickness $\varepsilon$. Refer to \cite{Novick2008}, $F(\phi)$=$\frac{1}{4}(\phi^{2}-1)^{2}$  is the Ginzburg-Laudau double-well potential.  Both   $\nabla\phi$ ( gradient energy) and $F(\phi)$ (bulk energy)  contribute  to hydrophilic interactions, hydrophobic interactions respectively. The equilibrium configuration is the consequence of the competition between the two types of interactions.

Cahn-Hilliard equation is an important mathematical model to describe interface evolution in phase separation. By considering the competition between gradient energy and  bulk energy, the equation reveals the physical nature of the interface evolution during phase separation. Depending on our construction objectives, phase field will be described  as following classical system
\begin{equation}\label{2-fai}
 \phi_{t} +\nabla\cdot (\phi \textbf{u})+\nabla\cdot \textbf{J}_{\phi}=0,
\end{equation}
for  $\textbf{u}$ is velocity field of incompressible fluids, $\textbf{J}_{\phi}$ represents the mass flux and will give the specific form later. With the help of  $\nabla\cdot \textbf{u}$=0 and equation (\ref{2.1a}), we can rewrite the nonconservative advection operator $(\textbf{u}\cdot\nabla)\phi$ as the conservative $\nabla\cdot (\phi \textbf{u})$ equivalently.

The chemical potential $\omega$ is defined by the variational derivative of the energy $E_{\phi}$ with respect $\phi$
\begin{equation}
\omega=\frac{\delta E_{\phi}}{\delta \phi}=-\gamma\varepsilon\Delta \phi+ \frac{\gamma}{\varepsilon}f(\phi),
\end{equation}
where  $f(\phi)=F(\phi)'$.

\begin{Remark}
There are two popular choices for $F(\phi)$, the Ginzburg-Laudau double-well potential in \cite{Novick2008} and Flory-Huggins logarithmic potential \cite{2019The} as
\begin{equation*}
F(\phi)=\frac{1+\phi}{2} \ln (\frac{1+\phi}{2})+\frac{1-\phi}{2} \ln (\frac{1-\phi}{2})+\frac{\vartheta}{4}(\phi^{2}-1)^{2},
\end{equation*}
here $\vartheta> 2$ is the energy parameter.
\end{Remark}

$\bullet$ The kinetic energy $E_{u}$ is defined as
\begin{equation*}
E_{u}=\frac{1}{2}\int_{\Omega}\rho (\phi) |\textbf{u}|^{2}\mbox{d}\textbf{x},
\end{equation*}
where  $\textbf{u}$ is velocity field, and $\rho (\phi)=\frac{\rho_{2}-\rho_{1}}{2}\phi+\frac{\rho_{1}+\rho_{2}}{2}$ is the density of fluid material.  The relation between $\rho$ and $\phi$ is given in \cite{ABELS2012THERMODYNAMICALLY, 2011Existence}. Navier-Stokes equation reflects the basic mechanical law of viscous fluid (also known as real fluid) flow, which describing the conservation of momentum in viscous incompressible fluids and has very important significance in fluid mechanics. Thus, we model the incompressible flow, see for instance \cite{ABELS2012THERMODYNAMICALLY}
\begin{equation}\label{2-u}
\begin{aligned}
&\rho(\phi) \textbf{u}_{t} +((\rho(\phi) \textbf{u}+\frac{\partial\rho(\phi)}{\partial\phi} \textbf{J}_{\phi})\cdot\nabla)\textbf{u} -\nabla\cdot \textbf{S}+\nabla p =\textbf{F},\\
&\nabla\cdot\textbf{ u} =0,
\end{aligned}
\end{equation}
where  $\textbf{S}$ is the symmetric stress tensor, $p$ is the pressure field, $\textbf{F}$ is coupled terms. We will give the specific forms of $\textbf{F}$ and $\textbf{S}$ latter.

$\bullet$ The electromagnetic field has a contribution to $E_{B}$, which the form as
\begin{equation*}
E_{B}=\frac{1}{2}\int_{\Omega}\frac{|\textbf{B}|^{2}}{\mu}\mbox{d}\textbf{x},
\end{equation*}
for $\mu$ is magnetic permeability, $\textbf{B}$ is the magnetic displacement vector \cite{2002Electromagnetic}. There is no contribution to the total energy from the electric field, since the displacement current is neglected in MHD model in \cite{Ming2007A}. Remarkable, the  magnetic permeability $\mu$ of the two fluids are relatively weak, and there is no significant difference between the two fluids. Namely, we assume  $\mu_{1}$=$\mu_{2}$=$\mu$. As a classical magnetic equation, we can well describe the evolution of magnetic field $\textbf{B}$ as
\begin{equation}\label{2-B}
\begin{aligned}
& \textbf{B}_{t} + \frac{1}{\mu}\nabla\times (\frac{1}{\sigma (\phi)} \nabla\times \textbf{B})-\nabla\times (\textbf{u}\times \textbf{B})=\textbf{0},\\
&\nabla\cdot\textbf{ B} =0,
\end{aligned}
\end{equation}
for $\sigma (\phi)$ is electric conductivity, more  specifically $
\sigma (\phi)=\frac{\sigma_{2}-\sigma_{1}}{2}\phi+\frac{\sigma_{1}+\sigma_{2}}{2}.$  The following boundary conditions are assumed as
\begin{equation}
\textbf{u}\cdot \textbf{n}=0, \quad \textbf{J}_{\phi}\cdot\textbf{ n}=0, \quad \textbf{B}\times \textbf{n}=\textbf{0},
\end{equation}
for $\textbf{n}$ is unit outer normal vector.

Hence, by taking the time derivative of the total energetic functional from equation (\ref{17}), we have
\begin{equation}\label{2-a}
\begin{aligned}
\frac{dE}{dt}&=\int_{\Omega}\omega\cdot \phi_{t}\mbox{d}\textbf{x}+\int_{\Omega} (\rho(\phi)\textbf{u}\cdot  \textbf{u}_{t}+ \frac{|\textbf{u}|^{2}}{2}\rho(\phi)'\phi_{t})\mbox{d}\textbf{x}+\int_{\Omega}\frac{\textbf{B}}{\mu}\cdot \textbf{B}_{t} \mbox{d}\textbf{x}\\
&= \int_{\Omega}J_{\phi}\cdot\nabla\omega\mbox{d}\textbf{x}- \int_{\Omega}\textbf{S} \cdot D(\textbf{u})\mbox{d}\textbf{x}-\int_{\Omega}\frac{1}{\mu^{2}\sigma (\phi)}|\nabla\times \textbf{B}|^{2}\mbox{d}\textbf{x}\\
&\quad +\int_{\Omega}\phi \textbf{u}\cdot \nabla\omega\mbox{d}\textbf{x} +  \int_{\Omega} \textbf{u}\cdot \textbf{F}\mbox{d}\textbf{x}-\int_{\Omega}(\frac{1}{\mu}\nabla\times \textbf{B}\times \textbf{B})\cdot \textbf{u}\mbox{d}\textbf{x} ,
\end{aligned}
\end{equation}
where  $D(u)$=$\frac{1}{2}(\nabla \textbf{u}+(\nabla \textbf{u})^{T})$ is strain velocity tensor.  Here, we used   equations (\ref{2-u}),   (\ref{2-B}), and   (\ref{2-fai}), the boundary of velocity field and the following equation, refer to \cite{ABELS2012THERMODYNAMICALLY}.
\begin{equation*}
\int_{\Omega}\textbf{u}\cdot ( (\rho(\phi)\textbf{u}+\frac{\partial\rho(\phi)}{\partial\phi} \textbf{J}_{\phi})\cdot\nabla)\textbf{u}\mbox{d}\textbf{x}=-\frac{1}{2}\int_{\Omega}\frac{\partial\rho(\phi)}{\partial\phi}|\textbf{u}|^{2}(\nabla\cdot\textbf{J}_{\phi}+\nabla\phi\cdot\textbf{u})\mbox{d}\textbf{x}.
\end{equation*}

Collecting all terms having a scalar product with the velocity field in above equation, gives the rate of change of the mechanical work with. We define it as $\frac{dW}{dt}$, namely
\begin{equation*}\label{2-W}
\begin{aligned}
\frac{dW}{dt}=-\int_{\Omega}(\frac{1}{\mu}\nabla\times \textbf{B}\times \textbf{B})\cdot \textbf{u}\mbox{d}\textbf{x} +\int_{\Omega}\phi \textbf{u}\cdot \nabla\omega\mbox{d}\textbf{x} + \int_{\Omega} \textbf{u}\cdot \textbf{F}\mbox{d}\textbf{x}.
\end{aligned}
\end{equation*}
There is no external force in the system, the rate of change of mechanical work is 0, namely
\begin{equation}\label{W}
\frac{dW}{dt}=0.
\end{equation}
Hence, we find  the forces $\textbf{F}$ is
\begin{equation*}\label{2-F}
\begin{aligned}
\textbf{F}= \frac{1}{\mu}\nabla\times \textbf{B}\times \textbf{B}- \phi  \cdot \nabla\omega,
\end{aligned}
\end{equation*}
where the first term is the Lorentz force, the second term is surface tension induced by  microscopic internal energy. To determine the fluxes $\textbf{J}_{\phi}$ and $\textbf{S}$, we introduce the dissipation functional
\begin{equation*}\label{22-fai}
\begin{aligned}
\Phi (\textbf{J}, \textbf{J})=\int_{\Omega}\frac{|\textbf{J}_{\phi}|^{2}}{2M(\phi)}\mbox{d}\textbf{x}+\int_{\Omega}\frac{|\nabla\times \textbf{B}|^{2}}{\mu^{2}\sigma (\phi)}\mbox{d}\textbf{x}+\int_{\Omega}\frac{|\textbf{S}|^{2}}{4\eta (\phi)}\mbox{d}\textbf{x},
\end{aligned}
\end{equation*}
for  $\textbf{J}$=$(\textbf{J}_{\phi}, \nabla\times \textbf{B}, \textbf{S})$. Considering $M(\phi)$ is the diffusion mobility related to the relaxation time scale, $\eta (\phi)$ is the viscosity of the fluids. In fact, the postulated form of the dissipation function $\phi$ assumes that the fluxes depend linearly on the thermodynamic forces implicitly. We use Onsager's variational principle, which postulates
\begin{equation}
\delta_{J}\left(\frac{dE}{dt}+  \Phi (\textbf{J}, \textbf{J}) \right)=0.
\end{equation}
According to above equation, we can obtain
\begin{equation*}
\textbf{J}_{\phi}=-M(\phi)\nabla\omega, \quad \textbf{S}=2\eta (\phi)D(\textbf{u}),
\end{equation*}
where  $M(\phi)=\frac{M_{2}-M_{1}}{2}\phi+\frac{M_{1}+M_{2}}{2}$, $\eta (\phi)= \frac{\eta_{2}-\eta_{1}}{2}\phi+\frac{\eta_{1}+\eta_{2}}{2}$.

In summary, we can conclude that  incompressible diffuse interface CH-MHD with large density ratios model as
\begin{equation}\label{2-m}
\left\{
\begin{aligned}
&\rho (\phi)\textbf{u}_{t} +((\rho (\phi)\textbf{u}-\frac{\partial\rho(\phi)}{\partial\phi} M(\phi)\nabla\omega)\cdot\nabla)\textbf{u} -2\eta (\phi)\nabla\cdot D(\textbf{u}) \\
&\quad\,\, +\nabla p -\frac{1}{\mu}\nabla\times \textbf{B}\times \textbf{B}+\phi  \cdot \nabla\omega=\textbf{0},\\
&\nabla\cdot\textbf{ u} =0,\\
& \textbf{B}_{t} + \frac{1}{\mu}\nabla\times (\frac{1}{\sigma (\phi)} \nabla\times \textbf{B})-\nabla\times (\textbf{u}\times \textbf{B})=\textbf{0},\\
&\nabla\cdot\textbf{ B} =0,\\
& \phi_{t} +\nabla\cdot (\phi \textbf{u})-\nabla\cdot (M(\phi)\nabla\omega)=0,\\
&\omega=-\gamma\varepsilon\Delta \phi+ \frac{\gamma}{\varepsilon}f(\phi),
\end{aligned}
\right.
\end{equation}
and adapt to the following initial boundary conditions
\begin{equation}\label{2-boundary}
\left\{
\begin{aligned}
&\textbf{u}(0)=\textbf{u}^{0}, \quad \textbf{B}(0)=\textbf{B}^{0},\quad \phi(0)=\phi^{0}, \,\, \mbox{on} \,\, \Omega_{0},\\
&\textbf{u}=\textbf{0}, \quad \textbf{B} \times \textbf{n}=\textbf{0}, \,\, \mbox{on}\,\, \partial\Omega,\\
&\partial_{n}\phi=0,\quad M(\phi)\partial_{n} w=0,\,\, \mbox{on}\,\, \partial\Omega.
\end{aligned}
\right.
\end{equation}

$(\rho_{j}, \sigma _{j}, M_{j}, \eta_{j})>0$ be the positive specific constants, for $j$=1, 2. And for simplicity's sake, we define $\rho=\rho(\phi)$ here and after.

In such framework, the overall dynamics of an isothermal hydrodynamical system are governed by the Fist Law of Thermodynamics and the Second Law of Thermodynamics that lead to the following Theorem. The similar process refer to \cite{Li2017TWO, 2021SharpZHANG}.

\begin{The} The considered diffuse interface CH-MHD with large density  ratios system  (\ref{2-m})-(\ref{2-boundary}), which is thermodynamically consistent in the sense that
\begin{equation}\label{model-energy}
\frac{dE}{dt} =-(\|\sqrt{M(\phi)}\nabla\omega\|^{2}+2\|\sqrt{\eta (\phi)}D(\textbf{u})\|^{2}+\frac{1}{\mu^{2}} \|\frac{1}{\sqrt{\sigma (\phi)}}\nabla\times \textbf{B}\|^{2})\leq 0,
\end{equation}
where  $ E=\gamma\int_{\Omega}\left(\frac{\varepsilon}{2}|\nabla\phi|^{2}+\frac{1}{\varepsilon}F(\phi)\right)\mbox{d}\textbf{x}+\frac{1}{2}\int_{\Omega}\rho |\textbf{u}|^{2}\mbox{d}\textbf{x}+\frac{1}{2}\int_{\Omega}\frac{|\textbf{B}|^{2}}{\mu}\mbox{d}\textbf{x}.
$

\end{The}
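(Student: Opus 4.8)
The plan is to differentiate the total energy $E$ in time and show that, after inserting the six equations of the model (\ref{2-m}) and integrating by parts under the boundary conditions (\ref{2-boundary}), every term carrying a factor of the velocity cancels, leaving only three manifestly dissipative integrals. First I would split $\frac{dE}{dt}$ into the contributions from $E_{\phi}$, $E_{u}$ and $E_{B}$, exactly as in the first line of (\ref{2-a}). For the mixed energy, integrating the gradient term by parts and using $\partial_{n}\phi=0$ together with $\omega=-\gamma\varepsilon\Delta\phi+\frac{\gamma}{\varepsilon}f(\phi)$ gives $\frac{dE_{\phi}}{dt}=\int_{\Omega}\omega\,\phi_{t}\,\mbox{d}\textbf{x}$. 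The kinetic part yields $\int_{\Omega}(\rho\textbf{u}\cdot\textbf{u}_{t}+\frac{1}{2}|\textbf{u}|^{2}\rho'(\phi)\phi_{t})\,\mbox{d}\textbf{x}$ by the product rule on $\frac{1}{2}\rho(\phi)|\textbf{u}|^{2}$, and the magnetic part yields $\int_{\Omega}\frac{\textbf{B}}{\mu}\cdot\textbf{B}_{t}\,\mbox{d}\textbf{x}$.

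Next I would substitute the evolution laws. For the phase field I use $\phi_{t}=-\nabla\cdot(\phi\textbf{u})+\nabla\cdot(M(\phi)\nabla\omega)$ and integrate by parts with $M(\phi)\partial_{n}\omega=0$, turning $\int_{\Omega}\omega\,\phi_{t}$ into $\int_{\Omega}\phi\,\textbf{u}\cdot\nabla\omega\,\mbox{d}\textbf{x}-\int_{\Omega}M(\phi)|\nabla\omega|^{2}\,\mbox{d}\textbf{x}$. For the magnetic field I substitute $\textbf{B}_{t}=-\frac{1}{\mu}\nabla\times(\frac{1}{\sigma(\phi)}\nabla\times\textbf{B})+\nabla\times(\textbf{u}\times\textbf{B})$ and apply the curl integration-by-parts identity $\int_{\Omega}\textbf{B}\cdot\nabla\times\textbf{G}\,\mbox{d}\textbf{x}=\int_{\Omega}(\nabla\times\textbf{B})\cdot\textbf{G}\,\mbox{d}\textbf{x}$, valid since $\textbf{B}\times\textbf{n}=\textbf{0}$; this produces the dissipative term $-\frac{1}{\mu^{2}}\int_{\Omega}\frac{1}{\sigma(\phi)}|\nabla\times\textbf{B}|^{2}\,\mbox{d}\textbf{x}$ and, via the scalar triple product $(\nabla\times\textbf{B})\cdot(\textbf{u}\times\textbf{B})=-\textbf{u}\cdot((\nabla\times\textbf{B})\times\textbf{B})$, the coupling term $-\int_{\Omega}(\frac{1}{\mu}\nabla\times\textbf{B}\times\textbf{B})\cdot\textbf{u}\,\mbox{d}\textbf{x}$. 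Testing the momentum equation with $\textbf{u}$ and using $\nabla\cdot\textbf{u}=0$ (so the pressure drops out after integration by parts) together with $\int_{\Omega}\textbf{S}\cdot D(\textbf{u})$ coming from the viscous term gives $-2\int_{\Omega}\eta(\phi)|D(\textbf{u})|^{2}\,\mbox{d}\textbf{x}$ after inserting $\textbf{S}=2\eta(\phi)D(\textbf{u})$. Reassembling these pieces reproduces exactly equation (\ref{2-a}).

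The crux is the cancellation of all velocity-coupled terms. Here I would invoke the stated convection identity, which with $\textbf{J}_{\phi}=-M(\phi)\nabla\omega$ shows that the contribution of the variable-density advection operator exactly balances the kinetic-energy-density variation $\frac{1}{2}\int_{\Omega}\rho'(\phi)|\textbf{u}|^{2}\phi_{t}\,\mbox{d}\textbf{x}$. This balance is the genuinely new difficulty of the large-density-ratio setting, and it is precisely why the momentum equation must carry the drift term $-\frac{\partial\rho(\phi)}{\partial\phi}M(\phi)\nabla\omega$; in the matched-density case $\rho'\equiv0$ and both terms simply disappear. With this identity in hand, the surviving velocity terms collapse to $\frac{dW}{dt}=\int_{\Omega}\textbf{u}\cdot\big(\textbf{F}-\frac{1}{\mu}\nabla\times\textbf{B}\times\textbf{B}+\phi\nabla\omega\big)\,\mbox{d}\textbf{x}$, which vanishes for the force $\textbf{F}=\frac{1}{\mu}\nabla\times\textbf{B}\times\textbf{B}-\phi\nabla\omega$ selected in the derivation, so that (\ref{W}) holds.

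Finally, collecting what remains leaves only the three dissipation integrals $-\int_{\Omega}M(\phi)|\nabla\omega|^{2}-2\int_{\Omega}\eta(\phi)|D(\textbf{u})|^{2}-\frac{1}{\mu^{2}}\int_{\Omega}\frac{1}{\sigma(\phi)}|\nabla\times\textbf{B}|^{2}$, which is precisely the right-hand side of (\ref{model-energy}). Sign-definiteness is immediate because $M(\phi)$, $\eta(\phi)$ and $\sigma(\phi)$ are affine in $\phi$ with endpoint values $M_{j},\eta_{j},\sigma_{j}>0$, hence strictly positive, so each integrand is non-negative and $\frac{dE}{dt}\leq0$. I expect the bookkeeping of the velocity-coupled terms — in particular matching the variable-density convection identity against the $\rho'(\phi)\phi_{t}$ term — to be the only step requiring genuine care; everything else is routine integration by parts justified by the boundary conditions (\ref{2-boundary}).
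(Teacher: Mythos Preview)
Your proposal is correct and follows essentially the same route as the paper: the paper's proof is a one-line reference to equations (\ref{2-a}) and (\ref{W}), which were already established in the model derivation, and your argument simply spells out in detail the integrations by parts and cancellations that produce those two identities. The only difference is presentational --- you re-derive inside the proof what the paper had already computed immediately before stating the theorem.
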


\begin{proof}
The theorem is given by immediate consequence of (\ref{2-a}) and (\ref{W}).

\end{proof}

\section{Fully discrete numerical scheme }
In this section, we propose a fully discrete finite element method (FEM) to solving the considered system. The unconditional energy stability, existence of the solution of scheme are proved. The meshes $\zeta_{h}$ is a regular and quasi-uniform partition of $\Omega$, which the mesh $h$. To approximate  unknowns fields, we employ the finite element spaces such that $\phi$ $\in Y_{h}$, $ \textbf{u}\in \textbf{X}_{h}$, $p \in M_{h}$,  $\textbf{B}\in \textbf{W}_{h} $, where the $Y_{h}\subset H^{1} (\Omega)$, $\textbf{X}_{h}\subset \textbf{H}^{1}_{0}(\Omega)$, $M_{h}\subset L^{2}_{0}(\Omega)$, $\textbf{W}_{h}\subset H^{1}_{n}(\Omega)$.

\begin{Assumption}\label{a1}
The finite element space $(\textbf{X}_{h}, M_{h})$   satisfy the following  inf-sup conditions, 
\begin{subequations}
\begin{align}
&\inf_{q_{h}\in M_{h}\backslash\{0\} }\sup_{\textbf{v}_{h}\in \textbf{X}_{h}\backslash\{\textbf{0}\}}\frac{(\nabla\cdot \textbf{v}_{h}, q_{h})}{\|q_{h}\|\|\nabla \textbf{v}_{h}\|}\geq C_{9}. \label{41}
\end{align}
\end{subequations}
\end{Assumption}
The more popular mixed finite element pairing  of $(\textbf{X}_{h}, M_{h})$, such as MINI element, Taylor-Hood element, which all can fulfill the inf-sup condition (\ref{41}), see for instance \cite{1986FiniteG, 1991BF}.
In addition,  $\textbf{W}_{h}=\{\textbf{C}_{h}\in C^{0}(\bar{\Omega})\cap \textbf{H}^{1}_{n}:  \textbf{C}_{h}|_{k}\in \textbf{P}_{k}(K), \forall K\in \zeta_{h}\}$, for $k\geq 1$, $\textbf{P}_{k}(K)$ is the space of polynomials of total degree.
\subsection{Description of the scheme}
Focusing the  fifth equation of (\ref{2-m})
\begin{equation*}
\phi_{t} +\nabla\cdot (\phi \textbf{u})-\nabla\cdot (M(\phi)\nabla\omega)=0.
\end{equation*}
Multiplying above equation with $-\frac{\rho_{2}-\rho_{1}}{2}$, combining the definition with $\rho (\phi)$, we obtain the mass balance equation
\begin{equation}
 \rho_{t} +\nabla\cdot (\rho\textbf{u}-\frac{\partial\rho}{\partial\phi} M(\phi) \nabla\omega)=0.
\end{equation}
And we can rewrite the first equation of (\ref{2-m}) (momentum equation) as
\begin{equation*}
(\rho\textbf{u})_{t} +\nabla\cdot (\rho\textbf{u} \otimes\textbf{u})+\nabla\cdot(\textbf{u}\otimes (-\frac{\partial\rho}{\partial\phi} M(\phi)\nabla\omega))-2\eta (\phi)\nabla\cdot  D(\textbf{u}) +\nabla p -\frac{1}{\mu}\nabla\times \textbf{B}\times \textbf{B}+\phi  \cdot \nabla\omega=\textbf{0}.
\end{equation*}
Finally, we obtain the following unsteady incompressible diffusion interface CH-MHD  with large density ratios system (\ref{TWO-PHASE MHD}) and the initial boundary conditions (\ref{2-boundary}).
\begin{equation}\label{TWO-PHASE MHD}
\left\{
\begin{aligned}
&(\rho\textbf{u})_{t} +\nabla\cdot (\rho\textbf{u} \otimes\textbf{u})+\nabla\cdot(\textbf{u}\otimes (-\frac{\partial\rho}{\partial\phi} M(\phi)\nabla\omega))-2\eta (\phi) \nabla\cdot D(\textbf{u})\\
&\qquad\,\, +\nabla p -\frac{1}{\mu}\nabla\times \textbf{B}\times \textbf{B}+\phi  \cdot \nabla\omega=\textbf{0},\\
&\nabla\cdot\textbf{ u} =0,\\
& \textbf{B}_{t} + \frac{1}{\mu}\nabla\times (\frac{1}{\sigma (\phi)} \nabla\times \textbf{B})-\nabla\times (\textbf{u}\times \textbf{B})=\textbf{0},\\
&\nabla\cdot\textbf{ B} =0,\\
&\phi_{t} +\nabla\cdot (\phi \textbf{u})-\nabla\cdot (M(\phi)\nabla\omega)=0,\\
&\omega=-\gamma\varepsilon\Delta \phi+ \frac{\gamma}{\varepsilon}f(\phi).
\end{aligned}
\right.
\end{equation}

For arbitrary but fixed $T>0$ and the time step size $\Delta t$=$\frac{T}{N}$,  $d_{t} \textbf{u}_{h}^{k+1}$=$\frac{\textbf{u}_{h}^{k+1}-\textbf{u}_{h}^{k}}{\Delta t}$. Considering model (\ref{TWO-PHASE MHD})-(\ref{2-boundary}), we give the following fully discrete  finite element scheme as
\begin{subequations}\label{fully discrete scheme}
\begin{align}
&(d_{t}(\rho_{h}^{k+1} \textbf{u}_{h}^{k+1}),\textbf{v}_{h}) -( \rho_{h}^{k}  \textbf{u}_{h}^{k+1} \otimes\textbf{u}_{h}^{k+1}, \nabla\textbf{v}_{h})+( \textbf{u}_{h}^{k+1} \otimes (\frac{\rho_{2}-\rho_{1}}{2} M(\phi_{h}^{k})\nabla\omega_{h}^{k+1}), \nabla\textbf{v}_{h})   \nonumber \\
&  +2 (\eta (\phi_{h}^{k})D(\textbf{u}_{h}^{k+1}), D( \textbf{v}_{h})   )-(\nabla\cdot \textbf{v}_{h},  p_{h}^{k+1}) +\frac{1}{\mu}(\nabla\times \textbf{B}_{h}^{k+1}, \textbf{v}_{h}\times \textbf{B}_{h}^{k})+(\phi_{h}^{k}  \cdot \nabla\omega_{h}^{k+1}, \textbf{v}_{h})=0,\label{equau}\\
&(\nabla\cdot\textbf{ u}_{h}^{k+1}, q_{h}) =0,\\
& (d_{t}\textbf{B}_{h}^{k+1}, \textbf{C}_{h}) + \frac{1}{\mu}(  \frac{1}{\sigma (\phi_{h}^{k})} \nabla\times \textbf{B}_{h}^{k+1}, \nabla\times \textbf{C}_{h})+ \frac{1}{\mu}  (\frac{1}{ \sigma (\phi_{h}^{k})}\nabla\cdot\textbf{ B}_{h}^{k+1}, \nabla\cdot \textbf{C}_{h})\nonumber  \\
& \quad  -(\textbf{u}_{h}^{k+1}\times \textbf{B}_{h}^{k}, \nabla\times \textbf{C}_{h})=0,\label{B-1}\\
&(d_{t}\phi_{h}^{k+1}, \psi_{h}) -(\phi_{h}^{k} \textbf{u}_{h}^{k+1}, \nabla \psi_{h})+ (M(\phi_{h}^{k})\nabla\omega_{h}^{k+1}, \nabla\psi_{h})=0, \label{eqn123}\\
&(\omega_{h}^{k+1}, \chi_{h})=\gamma\varepsilon(\nabla\phi_{h}^{k+1}, \nabla\chi_{h})+ \frac{\gamma}{\varepsilon}(f(\phi_{h}^{k+1}), \chi_{h}), \label{22}\\
&\textbf{u}_{h}^{0}= P_{uh}\textbf{u}_{0}, \, \,  \textbf{B}_{h}^{0}=P_{Bh}\textbf{B}_{0},  \, \,\phi_{h}^{0}=P_{\phi h}\phi_{0},
\end{align}
\end{subequations}
where  $(\textbf{v}_{h}, q_{h},  \textbf{C}_{h}, \psi_{h},  \chi_{h})\in  \textbf{X}_{h}\times M_{h}\times \textbf{W}_{h}\times Y_{h}\times Y_{h}$ and $f(\phi_{h}^{k+1})$=$(\phi_{h}^{k+1})^{3}-\phi_{h}^{k}$ is derived from a convex splitting approximation to the non-convex function $F(\phi)$, see for instance \cite{1998UnconditionallyE, 2014ExistenceHAN, 2010UnconditionallyWISE}. And  $P_{0h}$ is $L^{2}$ orthogonal projection operator from $L^{2}$ into $\textbf{V}_{h}$, which   $\textbf{V}_{h}=\{\textbf{u}_{h}\in \textbf{X}_{h}: (\nabla\cdot \textbf{u}_{h}, q_{h})=0, \forall q_{h}\in M_{h} \}$. $P_{uh}(P_{Bh})$ is $L^{2}$ orthogonal projection operator from $L^{2}$ into $\textbf{X}_{h}(\textbf{M}_{h})$. And  $P_{\phi h}$ is $L^{2}$ orthogonal projection operator from $L^{2}$ into $Y_{h}$. Inspired by \cite{1991Some, 2002On}, above operators all have $H^{1}$ stability, namely,
\begin{subequations}
\begin{align}
& \|\nabla P_{uh}\textbf{u}_{h}^{k+1}\|\leq C_{u} \|\nabla \textbf{u}_{h}^{k+1}\|, \label{H1}\\
&\|\nabla P_{Bh}\textbf{B}_{h}^{k+1}\|\leq C_{B} \|\nabla \textbf{B}_{h}^{k+1}\|,\label{H2}\\
&\|\nabla P_{\phi h}\phi^{k+1}_{h}\|\leq C_{\phi} \|\nabla \phi^{k+1}_{h}\|,\label{H3}
\end{align}
\end{subequations}
for $C_{u}, C_{B}, C_{\phi}$ be generic positive constants. Assume the following limiting conditions
\begin{equation}\label{equation4}
\lim_{h\rightarrow 0}\|\phi_{h}^{0}-\phi_{0}\|_{H^{1}}= \lim_{h\rightarrow 0}\|\textbf{u}_{h}^{0}-\textbf{u}_{0}\|_{L^{2}}=\lim_{h\rightarrow 0}\|\textbf{B}_{h}^{0}-\textbf{B}_{0}\|_{L^{2}}=0.
\end{equation}

The symbol $\langle\cdot, \cdot\rangle$ denotes for dual product between the space and its space, such as $(H^{1}(\Omega))'$ and $H^{1}(\Omega)$, $(\textbf{H}_{0}^{1}(\Omega))'$ and $\textbf{H}_{0}^{1}(\Omega)$, $(\textbf{H}_{n}^{1}(\Omega))'$ and $\textbf{H}_{0}^{1}(\Omega)$, see for instance \cite{1991}. And we define $\|\textbf{v}\|_{\textbf{H}_{0}^{1}}=\|\nabla\textbf{v}\|$.
\begin{equation*}
\begin{aligned}
&\|\phi_{h}\|_{(H^{1})'}=\sup_{\psi\in H^{1}(\Omega)}\frac{\langle\phi_{h}, \psi\rangle}{\|\psi\|_{H^{1}}},\\
&\|\textbf{u}_{h}\|_{(\textbf{H}_{0}^{1})'}=\sup_{\textbf{v}\in\textbf{H}_{0}^{1}(\Omega)\setminus\{0\}}\frac{\langle\textbf{u}_{h}, \textbf{v}\rangle}{\|\nabla\textbf{v}\|},\\
&\|\textbf{B}_{h}\|_{(\textbf{H}_{n}^{1})'}=\sup_{\textbf{C}\in H_{n}^{1}(\Omega) }\frac{\langle\textbf{B}_{h}, \textbf{C}\rangle}{\| \textbf{C}\|_{\textbf{H}_{n}^{1}}}.\\
\end{aligned}
\end{equation*}

\begin{Definition}
Based on the $ \phi_{0}\in H^{1}$, $\textbf{u}_{0}\in \textbf{H}_{0}^{1}$, $p\in L_{0}^{2}$, $\textbf{B}_{0}\in \textbf{H}_{n}^{1}$, the $( \phi, \omega, \textbf{u}, p, \textbf{B})$ be  weak solution of model (\ref{TWO-PHASE MHD}), if it satisfies the following  regularity assumptions
\begin{subequations}
\begin{align}
&\phi  \in L^{\infty}(0, T; H^{1}), \,\, \phi_{t},  \in L^{2}(0, T; (H^{1})'),\\
& \omega\in L^{\infty}(0, T;  H^{1} ),\\
&\textbf{u} \in L^{\infty}(0, T; H^{1}_{0}),\,\, \textbf{u}_{t} \in L^{2}(0, T; (H^{1}_{0})'),\\
&p\in L^{2}(0, T; L^{2}_{0}),\\
&\textbf{B} \in L^{\infty}(0, T; H^{1}_{n}),\,\, \textbf{B}_{t} \in L^{\frac{4}{d}}(0, T; (H^{1}_{n})').
\end{align}
\end{subequations}

\end{Definition}

\subsection{Unconditionally energy stable of the scheme}
In this subsection, we mainly consider the unconditional energy stable of the above scheme. And the following equations can be employed, see for instance \cite{2011Existence, 2021ExistenceKA},
\begin{equation}\label{e-2}
\begin{aligned}
& \int_{\Omega}(\nabla\cdot (u^{k+1}\otimes J^{k+1}))\psi_{1}\mbox{d}\textbf{x}\\
&\quad =\int_{\Omega} (\nabla\cdot J^{k+1}-\frac{\rho^{k+1}-\rho^{k}}{\Delta t}-u^{k+1}\cdot\nabla\rho^{k} )\frac{u^{k+1}}{2}\cdot\psi_{1}\mbox{d}\textbf{x}+\int_{\Omega}(J^{k+1}\cdot\nabla)u^{k+1}\cdot\psi_{1}\mbox{d}\textbf{x},\\
& \int_{\Omega}( (\nabla\cdot J)\frac{u}{2} +(J\cdot\nabla)u   )\cdot u\mbox{d}\textbf{x}=\int_{\Omega} \nabla\cdot (J\frac{|u|^{2}}{2})\mbox{d}\textbf{x}=0,\\
& \int_{\Omega} ( \nabla\cdot(\rho^{k} u\otimes u)-(\nabla\rho^{k}\cdot u )\frac{u}{2}      )\cdot u\mbox{d}\textbf{x}=0,\\
& (\rho u-\rho^{k} u^{k})\cdot u=(\rho\frac{|u|^{2}}{2}-\rho^{k}\frac{|u^{k}|^{2}}{2}   )+(\rho-\rho^{k})\frac{|u|^{2}}{2}+\rho^{k}\frac{|u-u^{k}|^{2}}{2}.
\end{aligned}
\end{equation}

\begin{The}\label{THEOREM-1}
Presuming \textbf{Assumption \ref{a1}} is effective and  ($\phi_{h}^{k+1},  \omega_{h}^{k+1}, \textbf{u}_{h}^{k+1},  p_{h}^{k+1},    \textbf{B}_{h}^{k+1}$) are a solution of the scheme (\ref{fully discrete scheme}). Then the scheme is unconditionally energy stable in following sense
\begin{equation}\label{45}
\begin{aligned}
E( \phi_{h}^{k+1}, \textbf{u}_{h}^{k+1}, \textbf{B}_{h}^{k+1} )-E(\phi_{h}^{k}, \textbf{u}_{h}^{k}, \textbf{B}_{h}^{k} ) \leq 0,
\end{aligned}
\end{equation}
where
\begin{equation}\label{E}
E(\phi_{h}^{k+1}, \textbf{u}_{h}^{k+1}, \textbf{B}_{h}^{k+1} )=\frac{1}{2}\rho_{h}^{k+1}\| \textbf{u}_{h}^{k+1}\|^{2}+\frac{1}{2\mu}\|\textbf{B}_{h}^{k+1}\|^{2}+\frac{\gamma\varepsilon}{2}\|\nabla \phi_{h}^{k+1}\|^{2}+\frac{\gamma\varepsilon}{4}\|(\phi_{h}^{k+1})^{2}-1\|^{2}.
\end{equation}

\end{The}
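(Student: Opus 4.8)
The plan is to reproduce at the discrete level the continuous energy computation behind Theorem 2.1, testing each equation of (\ref{fully discrete scheme}) against the variable that is dual to it in the energy (\ref{E}). Concretely, I would set $\textbf{v}_h=\textbf{u}_h^{k+1}$ in (\ref{equau}), take $q_h=p_h^{k+1}$ in the discrete incompressibility equation, $\textbf{C}_h=\frac{1}{\mu}\textbf{B}_h^{k+1}$ in (\ref{B-1}), $\psi_h=\omega_h^{k+1}$ in (\ref{eqn123}), and $\chi_h=d_t\phi_h^{k+1}$ in (\ref{22}); then add the five identities and multiply by $\Delta t$. The goal is to show that every cross term generated by one equation is exactly annihilated by its conjugate in another, leaving $E(\phi_h^{k+1},\textbf{u}_h^{k+1},\textbf{B}_h^{k+1})-E(\phi_h^{k},\textbf{u}_h^{k},\textbf{B}_h^{k})$ on one side and manifestly nonnegative dissipation and numerical-dissipation terms on the other, which gives (\ref{45}) after discarding the nonnegative remainders.

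Next I would verify the cancellations. The pressure contribution $-(\nabla\cdot\textbf{u}_h^{k+1},p_h^{k+1})$ vanishes directly against the discrete divergence constraint tested with $q_h=p_h^{k+1}$; the Lorentz term $\frac{1}{\mu}(\nabla\times\textbf{B}_h^{k+1},\textbf{u}_h^{k+1}\times\textbf{B}_h^{k})$ and the induction term $-\frac{1}{\mu}(\textbf{u}_h^{k+1}\times\textbf{B}_h^{k},\nabla\times\textbf{B}_h^{k+1})$ cancel by symmetry of the inner product; and the capillary term $(\phi_h^{k}\nabla\omega_h^{k+1},\textbf{u}_h^{k+1})$ cancels the transport term $-(\phi_h^{k}\textbf{u}_h^{k+1},\nabla\omega_h^{k+1})$. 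Finally, the coupling $(d_t\phi_h^{k+1},\omega_h^{k+1})$ produced by (\ref{eqn123}) is replaced, via (\ref{22}) tested with $d_t\phi_h^{k+1}$, by $\gamma\varepsilon(\nabla\phi_h^{k+1},\nabla d_t\phi_h^{k+1})+\frac{\gamma}{\varepsilon}(f(\phi_h^{k+1}),d_t\phi_h^{k+1})$. The surviving quadratic forms $2\|\sqrt{\eta(\phi_h^{k})}D(\textbf{u}_h^{k+1})\|^{2}$, $\|\sqrt{M(\phi_h^{k})}\nabla\omega_h^{k+1}\|^{2}$, together with the resistive and divergence-penalty terms $\frac{1}{\mu^{2}}\|\frac{1}{\sqrt{\sigma(\phi_h^{k})}}\nabla\times\textbf{B}_h^{k+1}\|^{2}$ and $\frac{1}{\mu^{2}}\|\frac{1}{\sqrt{\sigma(\phi_h^{k})}}\nabla\cdot\textbf{B}_h^{k+1}\|^{2}$, are nonnegative and go to the dissipation side.

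I would then extract the energy differences using the algebraic identities supplied earlier. For the magnetic and gradient terms the first identity in (\ref{e-1}), namely $(A-D,A)=\frac12(\|A\|^{2}-\|D\|^{2}+\|A-D\|^{2})$, applied with $(A,D)=(\textbf{B}_h^{k+1},\textbf{B}_h^{k})$ and $(A,D)=(\nabla\phi_h^{k+1},\nabla\phi_h^{k})$ yields $\frac{1}{2\mu}(\|\textbf{B}_h^{k+1}\|^{2}-\|\textbf{B}_h^{k}\|^{2})$ and $\frac{\gamma\varepsilon}{2}(\|\nabla\phi_h^{k+1}\|^{2}-\|\nabla\phi_h^{k}\|^{2})$, each plus a nonnegative remainder. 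For the convex-split potential, using $f(\phi_h^{k+1})=(\phi_h^{k+1})^{3}-\phi_h^{k}$, the second identity in (\ref{e-1}) with $(A,D)=(\phi_h^{k+1},\phi_h^{k})$ peels off the double-well energy difference plus three nonnegative remainders. For the kinetic part, the last identity in (\ref{e-2}) rewrites $(\rho_h^{k+1}\textbf{u}_h^{k+1}-\rho_h^{k}\textbf{u}_h^{k},\textbf{u}_h^{k+1})$ as the kinetic-energy difference $\frac{1}{2}(\int_\Omega\rho_h^{k+1}|\textbf{u}_h^{k+1}|^{2}-\int_\Omega\rho_h^{k}|\textbf{u}_h^{k}|^{2})$, the nonnegative term $\frac{1}{2}\int_\Omega\rho_h^{k}|\textbf{u}_h^{k+1}-\textbf{u}_h^{k}|^{2}$, and the sign-indefinite term $\frac{1}{2}\int_\Omega(\rho_h^{k+1}-\rho_h^{k})|\textbf{u}_h^{k+1}|^{2}$.

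The main obstacle, and the reason for the special convective form in (\ref{equau}), is this last indefinite term: since large density ratios are allowed, $\rho_h^{k+1}-\rho_h^{k}$ has no definite sign and cannot simply be dropped. The plan is to absorb it exactly against the two convective contributions $-(\rho_h^{k}\textbf{u}_h^{k+1}\otimes\textbf{u}_h^{k+1},\nabla\textbf{u}_h^{k+1})$ and $(\textbf{u}_h^{k+1}\otimes\frac{\rho_2-\rho_1}{2}M(\phi_h^{k})\nabla\omega_h^{k+1},\nabla\textbf{u}_h^{k+1})$. Writing them together as $\int_\Omega\nabla\cdot(\textbf{u}_h^{k+1}\otimes\textbf{J}_h^{k+1})\cdot\textbf{u}_h^{k+1}$ with the discrete flux $\textbf{J}_h^{k+1}=\rho_h^{k}\textbf{u}_h^{k+1}-\frac{\rho_2-\rho_1}{2}M(\phi_h^{k})\nabla\omega_h^{k+1}$, the first two identities in (\ref{e-2}) are designed to convert this trilinear form into $-\frac{1}{2}\int_\Omega(\rho_h^{k+1}-\rho_h^{k})|\textbf{u}_h^{k+1}|^{2}$, which cancels the indefinite kinetic term; the ingredient making this work is the discrete mass balance $d_t\rho_h^{k+1}+\nabla\cdot\textbf{J}_h^{k+1}=0$, which I would derive by scaling (\ref{eqn123}) by $-\frac{\rho_2-\rho_1}{2}$ and using the linear law $\rho(\phi)=\frac{\rho_2-\rho_1}{2}\phi+\frac{\rho_1+\rho_2}{2}$ together with the discrete incompressibility constraint. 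The delicate point is that $\textbf{u}_h^{k+1}$ is only weakly divergence free, so the reduction must be carried out through the algebraic identities (\ref{e-2}) rather than by pointwise manipulation; checking that these identities genuinely apply to the finite element iterates, and that the weak mass balance suffices to kill the indefinite term, is where the real care is required. Once this cancellation is secured, all remaining remainders are nonnegative, and discarding them yields (\ref{45}).
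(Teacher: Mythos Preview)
Your proposal is correct and follows essentially the same route as the paper: the same test functions $(\textbf{v}_h,q_h,\textbf{C}_h,\psi_h,\chi_h)=(\textbf{u}_h^{k+1},p_h^{k+1},\tfrac{1}{\mu}\textbf{B}_h^{k+1},\omega_h^{k+1},d_t\phi_h^{k+1})$ (the paper builds the factor $\Delta t$ into the test functions rather than multiplying at the end), the same cancellations, and the same appeal to the algebraic identities (\ref{e-1})--(\ref{e-2}) to extract the energy differences and kill the sign-indefinite term $\tfrac12\int_\Omega(\rho_h^{k+1}-\rho_h^{k})|\textbf{u}_h^{k+1}|^2$. Your discussion of the kinetic cancellation via the discrete mass balance is in fact more explicit than the paper's, which simply invokes (\ref{e-2}) without spelling out the mechanism; your caution that $\textbf{u}_h^{k+1}$ is only discretely divergence-free is well placed, though the paper does not treat this point with any more care than you propose.
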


\begin{proof}

Letting $(\textbf{v}_{h}, q_{h})$=$\Delta t(\textbf{u}_{h}^{k+1},  p_{h}^{k+1})$ in scheme (\ref{fully discrete scheme}), we obtain
\begin{equation*}
\begin{aligned}
&( \rho_{h}^{k+1} \textbf{u}_{h}^{k+1}-\rho_{h}^{} \textbf{u}_{h}^{k},\textbf{u}_{h}^{k+1}) +\Delta t( \nabla\cdot (\rho_{h}^{k}  \textbf{u}_{h}^{k+1} \otimes\textbf{u}_{h}^{k+1}), \textbf{u}_{h}^{k+1})+ (\phi_{h}^{k}  \cdot \nabla\omega_{h}^{k+1}, \textbf{u}_{h}^{k+1})+2  \|\sqrt{\eta(\phi_{h}^{k})}D(\textbf{u}_{h}^{k+1})\|^{2} )\\
&+\frac{\Delta t}{\mu}(\nabla\times \textbf{B}_{h}^{k+1}, \textbf{u}_{h}^{k+1}\times \textbf{B}_{h}^{k})-\Delta t(\nabla\cdot(\textbf{u}_{h}^{k+1} \otimes (\frac{\rho_{2}-\rho_{1}}{2} M(\phi_{h}^{k})\nabla\omega_{h}^{k+1})), \textbf{u}_{h}^{k+1})=0,    \\
\end{aligned}
\end{equation*}
and letting $\textbf{C}_{h}$=$\frac{\Delta t}{\mu}\textbf{B}_{h}^{k+1}$ in scheme (\ref{fully discrete scheme}), we have
\begin{equation*}
\begin{aligned}
&\frac{1}{\mu}(\textbf{B}_{h}^{k+1}-\textbf{B}_{h}^{k}, \textbf{B}_{h}^{k+1}) + \frac{\Delta t}{\mu^{2}} \|\frac{1}{\sqrt{\sigma (\phi_{h}^{k})}} \textbf{B}_{h}^{k+1}\|^{2}   -\frac{\Delta t}{\mu}(\textbf{u}_{h}^{k+1}\times \textbf{B}_{h}^{k}, \nabla\times \textbf{B}_{h}^{k+1})=0,\\
\end{aligned}
\end{equation*}
letting $(\psi_{h}, \chi_{h})$=$\Delta t( \omega_{h}^{k+1}, d_{t} \phi_{h}^{k+1} )$ in scheme (\ref{fully discrete scheme}), we get
\begin{equation*}
\begin{aligned}
&(\phi_{h}^{k+1}-\phi_{h}^{k}, \omega_{h}^{k+1}) -\Delta t(\phi_{h}^{k} \textbf{u}_{h}^{k+1}, \nabla \omega_{h}^{k+1})+ \Delta t(M(\phi_{h}^{k})\nabla\omega_{h}^{k+1}, \nabla\omega_{h}^{k+1})=0, \label{eqn1}\\
&(\omega_{h}^{k+1}, \phi_{h}^{k+1}-\phi_{h}^{k})=\gamma\varepsilon(\nabla\phi_{h}^{k+1}, \nabla(\phi_{h}^{k+1}-\phi_{h}^{k}))+ \frac{\gamma}{\varepsilon}(f(\phi_{h}^{k+1}), \phi_{h}^{k+1}-\phi_{h}^{k}),\\
\end{aligned}
\end{equation*}
adding above equations  and combining it with the first and second  identity of (\ref{e-1}), (\ref{e-2}) to obtain
\begin{equation}\label{energy-zong}
\begin{aligned}
&\frac{1}{2}(\rho_{h}^{k+1}\|\textbf{u}_{h}^{k+1}\|^{2}- \rho_{h}^{k}\| \textbf{u}_{h}^{k}\|^{2}) +\frac{1}{2\mu}(\|\textbf{B}_{h}^{k+1}\|^{2}-\|\textbf{B}_{h}^{k}\|^{2})+\frac{\gamma\varepsilon}{2}(\|\nabla \phi_{h}^{k+1}\|^{2}-\|\nabla \phi_{h}^{k}\|^{2}) \\
&+\frac{\gamma}{4\varepsilon}(\|(\phi_{h}^{k+1})^{2}-1\|^{2}-\|(\phi_{h}^{k})^{2}-1\|^{2})+\frac{1}{2}\rho_{h}^{k}\| \textbf{u}_{h}^{k+1}-\textbf{u}_{h}^{k}\|^{2}+2\Delta t\|\sqrt{\eta(\phi_{h}^{k})}D(\textbf{u}_{h}^{k+1})\|^{2}\\
&+\frac{1}{2\mu}\|\textbf{B}_{h}^{k+1}-\textbf{B}_{h}^{k}\|^{2}+\frac{\Delta t}{\mu^{2}} \|\frac{1}{\sqrt{\sigma (\phi_{h}^{k})}}\textbf{B}\|^{2}_{\textbf{H}_{n}^{1}} +\Delta t \|\sqrt{M(\phi_{h}^{k})}\nabla\omega_{h}^{k+1}\|^{2}+\frac{\gamma\varepsilon}{2}\|\nabla\phi_{h}^{k+1} -\nabla\phi_{h}^{k} \|^{2}\\
&+\frac{\gamma}{4\varepsilon}\|(\phi_{h}^{k+1} )^{2} -(\phi_{h}^{k} )^{2}   \|^{2}+\frac{\gamma}{2\varepsilon} (\phi_{h}^{k+1})^{2}\| \phi_{h}^{k+1} - \phi_{h}^{k} \|^{2}+\frac{\gamma}{2\varepsilon} \| \phi_{h}^{k+1} - \phi_{h}^{k} \|^{2}=0.
\end{aligned}
\end{equation}

Obviously, it can be concluded that (\ref{45}) and (\ref{E}) is effective. Thus, the numerical scheme (\ref{fully discrete scheme}) is unconditionally energy stable.
\end{proof}

Since the solution of a discretized phase eqution does not necessarily satisfy a maximum principle,
We shall set $\hat{\phi}_{h}^{k+1}$  as
\begin{equation}\label{cut-off1}
\begin{aligned}
&\hat{\phi}_{h}^{k+1}=\left\{
            \begin{aligned}
            &\phi_{h}^{k+1}, \quad |\phi_{h}^{k+1}|<1, \\
            &\rm sign(\phi_{h}^{k+1}), \quad   |\phi_{h}^{k+1}|>1,
            \end{aligned}
             \right.\\
\end{aligned}
\end{equation}
Thus, the density, electric conductivity, diffusion mobility, and viscosity can be updated as
\begin{equation}\label{cut-off2}
\left\{
\begin{aligned}
&\rho_{h}^{k+1}=\frac{\rho_{2}-\rho_{1}}{2}\hat{\phi}_{h}^{k+1}+\frac{\rho_{1}+\rho_{2}}{2}, \\
&\sigma_{h}^{k+1}=\frac{\rho_{2}-\rho_{1}}{2}\hat{\phi}_{h}^{k+1}+\frac{\rho_{1}+\rho_{2}}{2}, \\
&M_{h}^{k+1}=\frac{\rho_{2}-\rho_{1}}{2}\hat{\phi}_{h}^{k+1}+\frac{\rho_{1}+\rho_{2}}{2},\\
&\eta_{h}^{k+1}=\frac{\rho_{2}-\rho_{1}}{2}\hat{\phi}_{h}^{k+1}+\frac{\rho_{1}+\rho_{2}}{2},
\end{aligned}
\right.
\end{equation}
From the definition of the cut-off function $\hat{\phi}_{h}^{k+1}$ given in equations (\ref{cut-off1}), we have
the following restriction conditions
\begin{equation}\label{canshu}
\begin{aligned}
&0<\rho_{1}\leq\rho_{h}^{k+1}\leq\rho_{2},\\
&0<\sigma _{1}\leq \sigma_{h}^{k+1}\leq\sigma _{2},\\
&0<M_{1}\leq M_{h}^{k+1}\leq M_{2},\\
&0<\eta_{1}\leq\eta_{h}^{k+1} \leq\eta_{2}.\\
\end{aligned}
\end{equation}

\begin{The}\label{t-fully}
Supposing the \textbf{Assumption \ref{a1}} is effective, let $E(\phi_{h}^{k+1}, \textbf{u}_{h}^{k+1}, \textbf{B}_{h}^{k+1})<\infty$, and  ($\phi_{h}^{k+1},  \omega_{h}^{k+1}, \textbf{u}_{h}^{k+1},  p_{h}^{k+1},    \textbf{B}_{h}^{k+1}$) ($k=0, \cdots, N-1$) are a solution of the scheme (\ref{fully discrete scheme}). For any $0\leq m\leq N-1$, we have  following estimates, where  $C_{i}$, $i$=9, $\cdots$, 23 are general  constants independent $h$ and $\Delta t$,
\begin{subequations}
\begin{align}
& \int_{\Omega}\phi_{h}^{m+1}\mbox{d}\textbf{x}= \int_{\Omega}\phi_{h}^{0}\mbox{d}\textbf{x}, \label{theorem-fai}\\
& \max_{0\leq k\leq N-1}\left\{ \rho_{h}^{k+1}\| \textbf{u}_{h}^{k+1}\|^{2}+ \frac{1}{\mu} \|\textbf{B}_{h}^{k+1}\|^{2}+ \gamma\varepsilon \|\nabla \phi_{h}^{k+1}\|^{2} + \frac{\gamma}{\varepsilon} \|(\phi_{h}^{k+1})^{2}-1\|^{2}    \right \}\leq C_{10}, \label{theorem-L2}\\
&\Delta t\sum_{k=0}^{m}\left(\eta_{1}\|D(\textbf{u}_{h}^{k+1})\|^{2}+  \frac{1}{\mu^{2}\sigma_{2} }(\|\nabla\times \textbf{B}_{h}^{k+1}\|^{2}+\|\nabla \cdot\textbf{B}_{h}^{k+1}\|^{2}) + M_{1}\|\nabla\omega_{h}^{k+1}\|^{2}    \right)\leq C_{11}, \label{theorem-h1}\\
&\sum_{k=0}^{m}\left(  \gamma\varepsilon \|\nabla\phi_{h}^{k+1} -\nabla\phi_{h}^{k} \|^{2} + \frac{\gamma}{\varepsilon}  \| \phi_{h}^{k+1} - \phi_{h}^{k} \|^{2} \right)\leq C_{12}, \label{theorem-faichaH1}\\
&\sum_{k=0}^{m} \left ( \rho_{h}^{k} \| \textbf{u}_{h}^{k+1}-\textbf{u}_{h}^{k} \|^{2} +\frac{1}{\mu}\|\textbf{B}_{h}^{k+1}-\textbf{B}_{h}^{k}\|^{2} \right)\leq C_{13},\label{theorem-uchaL2}\\
&\max_{0\leq k\leq N-1}\|\phi_{h}^{k+1}\|^{2}_{H^{1}}\leq C_{14}, \label{theorem-faiH1}\\
&\max_{0\leq k\leq N-1}\|\rho_{h}^{k+1}\|^{2}_{H^{1}}\leq C_{15}, \label{rho-faiH1}\\
&\Delta t\sum_{k=0}^{m}  \|\omega _{h}^{k+1}\|_{H^{1}}^{2}\leq C_{16}T+\frac{C_{11}}{M_{1}}, \label{theorem-WH1}\\
& \Delta t\sum_{k=0}^{m}  \|d_{t}\phi_{h}^{k+1}\|^{2}_{(H^{1})'}  \leq C_{17},\label{theorem-faiCHAl2}\\
& \Delta t\sum_{k=0}^{m} \|d_{t}\rho_{h}^{k+1}\|^{2}_{(H^{1})'}  \leq C_{18},\label{theorem-rhoCHAl2}\\
& \Delta t\sum_{k=0}^{m}(\|d_{t}(\rho_{h}^{k+1} \textbf{u}_{h}^{k+1}  )\|_{(\textbf{H}_{0}^{1})'}^{\frac{12}{6+d}}+\|p_{h}^{k+1}\|^{\frac{12}{6+d}} +\| d_{t}\textbf{B}_{h}^{k+1} \|^{\frac{4}{d}}_{(\textbf{H}_{n}^{1})'}   )\leq C_{19}(T+1). \label{theorem-PRESSURE}
\end{align}
\end{subequations}

\begin{proof}
Letting $\psi_{h}$=1 in equation (\ref{eqn123}), we can find
\begin{equation*}
(\phi_{h}^{k+1}, 1)=(\phi_{h}^{k}, 1)=\cdots=(\phi_{h}^{0}, 1),
\end{equation*}
we get equation (\ref{theorem-fai}) is valid. And  we can obtain   (\ref{theorem-L2})-(\ref{theorem-uchaL2}) through equation (\ref{energy-zong}). Applying
\begin{equation*}
(F(\phi_{h}^{k+1}), 1)=(\frac{1}{4}((\phi_{h}^{k+1})^{2}-1)^{2}, 1  )\geq \|\phi_{h}^{k+1}\|^{2}-2|\Omega|.
\end{equation*}
Linking above inequality and equation (\ref{theorem-L2}), we  find the equation (\ref{theorem-faiH1}) is derived. Letting $\chi_{h}$=$\Delta t \omega_{h}^{k+1}$ in equation (\ref{22}), and sum from $k$=$0, 2, \cdots, m$
\begin{equation*}
\begin{aligned}
\sum_{k=0}^{m} \Delta t\|\omega_{h}^{k+1}\|^{2}&=\sum_{k=1}^{n} (\Delta t\gamma\varepsilon(\nabla\phi_{h}^{k+1}, \nabla\omega_{h}^{k+1})+ \frac{\Delta t\gamma}{\varepsilon}(f(\phi_{h}^{k+1}), \omega_{h}^{k+1})  )\\
&=\sum_{k=1}^{n} ( \Delta t\gamma\varepsilon(\nabla\phi_{h}^{k+1}, \nabla\omega_{h}^{k+1})+ \frac{\Delta t\gamma}{\varepsilon}((\phi_{h}^{k+1})^{3}-\phi_{h}^{k}, \omega_{h}^{k+1}) )\\
&\leq \sum_{k=1}^{n}  (\Delta t \gamma\varepsilon\|\nabla\phi_{h}^{k+1}\| \|\nabla\omega_{h}^{k+1}\|+ \frac{\Delta t\gamma}{\varepsilon} (\|\phi_{h}^{k+1}\|^{3}_{L^{6}}+\|\phi_{h}^{k}\|) \| \omega_{h}^{k+1}\|  )\\
& \leq  \sum_{k=1}^{n} (\frac{\Delta t\gamma^{2}\varepsilon^{2}}{2}  \|\nabla\phi_{h}^{k+1}\|^{2}+ \frac{\Delta t}{2}  \|\nabla\omega_{h}^{k+1}\|^{2}+\frac{ \Delta t}{2} \|\omega_{h}^{k+1}\|^{2}+ \frac{\Delta t\gamma^{2}}{2\varepsilon^{2}} (\|\phi_{h}^{k+1}\|^{6}_{H^{1}}+ \|\phi_{h}^{k}\|^{2}) ).
\end{aligned}
\end{equation*}
Combining (\ref{theorem-L2})-(\ref{theorem-h1}), (\ref{theorem-faiH1}) and above inequality, we obtain (\ref{theorem-WH1}), for $C_{16}$=$\max \{C_{10}\gamma\varepsilon, \frac{\gamma^{2} C_{14}^{3}}{\varepsilon^{2}}, \frac{\gamma^{2} C_{14}}{\varepsilon^{2}} \}$.

Taking $\psi_{h}= P_{\phi h}\psi$ in equation (\ref{eqn123}) and  combining $H^{1}$ stability inequality (\ref{H3}) we have
\begin{equation*}
\begin{aligned}
(d_{t}\phi_{h}^{k+1}, \psi )&= (d_{t}\phi_{h}^{k+1}, P_{\phi h}\psi )\\
 &= \left( (\phi_{h}^{k} \textbf{u}_{h}^{k+1}, \nabla P_{\phi h}\psi )-  (M(\phi_{h}^{k})\nabla\omega_{h}^{k+1}, \nabla P_{\phi h}\psi) \right),\\
&\leq  (\|\phi_{h}^{k}\|_{L^{3}}\|\textbf{u}_{h}^{k+1}\|_{L^{6}} + M_{2}\|\nabla\omega_{h}^{k+1}\|  )\|\nabla P_{\phi h}\psi \| \\
&\leq  C_{\phi}( \|\phi_{h}^{k}\|_{H^{1}}\|\nabla \textbf{u}_{h}^{k+1}\|+ M_{2}\|\nabla\omega_{h}^{k+1}\|  )\|\nabla\psi \|.
\end{aligned}
\end{equation*}
Combining (\ref{theorem-h1}), (\ref{theorem-faiH1}), and above inequality, we can get (\ref{theorem-faiCHAl2}), where $C_{17}$=$ \frac{C_{\phi}^{2}C_{14}^{2}C_{11}}{\eta_{1}}+\frac{C_{\phi}^{2}M_{2}^{2}C_{11}}{M_{1}}$.

Thus, the inequality (\ref{theorem-faiCHAl2}) can be proved. According to  the definition of $\rho$, which is the linear relationship to $\phi$.  In view of this point, the (\ref{rho-faiH1}) and (\ref{theorem-rhoCHAl2}) is also valid, for $C_{15}$=$\frac{\rho_{2}-\rho_{1}}{2}C_{14}+\frac{\rho_{2}+\rho_{1}}{2}$ and $C_{18}$=$\frac{\rho_{2}-\rho_{1}}{2}C_{17}+\frac{\rho_{2}+\rho_{1}}{2}$.

Then we need to give the proof of inequality (\ref{theorem-PRESSURE}) in details. Firstly, the discrete inverse Stokes operator $I_{h}$ from $(\textbf{H}_{0}^{1})'$ to $\textbf{X}_{h}$ as follows: for all $\textbf{v}\in (\textbf{H}_{0}^{1})'$, the $(I_{h}(\textbf{v}), r_{h})\in \textbf{X}_{h}\times M_{h}$ satisfies the following equations
\begin{equation*}
\begin{aligned}
&(\nabla I_{h}(\textbf{v}), \nabla \textbf{v}_{h})+(\nabla\cdot \textbf{v}_{h}, r_{h})=(\textbf{v}, \textbf{v}_{h}), \quad \forall \textbf{v}_{h}\in \textbf{X}_{h},\\
&(\nabla\cdot I_{h}(\textbf{v}), q_{h})=0,\quad \forall q_{h}\in M_{h}.
\end{aligned}
\end{equation*}

See for instance \cite{2019error}, if  $\textbf{u}_{h}\in \textbf{V}_{h}$, there exist a positive constant $c$ independent of $h$ such that
\begin{equation*}
\sup_{\textbf{v}_{h}\in \textbf{X}_{h}\setminus\{\textbf{0}\}}\frac{\langle\textbf{u}_{h}, \textbf{v}_{h}\rangle}{\|\nabla \textbf{v}_{h}\|}\leq c\|\nabla I_{h}(\textbf{u}_{h})\|,
\end{equation*}

Thus, we hold
\begin{equation*}
\begin{aligned}
\|\textbf{u}_{h}\|_{(\textbf{H}_{0}^{1})'}=\sup_{\textbf{v}\in \textbf{H}_{0}^{1}\setminus\{\textbf{0}\}}\frac{\langle\textbf{u}_{h}, \textbf{v} \rangle}{\|\nabla \textbf{v} \|}=\sup_{\textbf{v}\in \textbf{H}_{0}^{1}\setminus\{\textbf{0}\}}\frac{\langle\textbf{u}_{h}, \textbf{v} \rangle}{\|\nabla P_{uh}\textbf{v} \|}\cdot\frac{\|\nabla P_{uh}\textbf{v} \|}{\|\nabla \textbf{v} \|}\leq c\|\nabla I_{h}(\textbf{u}_{h})\|.
\end{aligned}
\end{equation*}

Based on  \cite{2011Existence}, we consider the following elliptic boundary value problem to obtain $ \|\omega_{h}^{k+1}\|_{H^{2}}\leq C_{20}( \|\omega_{h}^{k+1}\|_{H^{1}}+\|f_{h}^{k+1}\|  )$, for given $f_{h}^{k+1}\in L^{2}$, and we limit $\|f_{h}^{k+1}\|\leq C_{21}$
\begin{equation*}
\left\{
\begin{aligned}
&-\nabla\cdot (M(\phi_{h}^{k})\nabla\omega_{h}^{k+1})+\int_{\Omega}\omega_{h}^{k+1} d\textbf{x}=f_{h}^{k+1}, \quad \rm{in}\, \Omega, \\
&\partial_{n}\omega_{h}^{k+1}|_{\partial\Omega}=0.
\end{aligned}
\right.
\end{equation*}

And taking  $\textbf{v}_{h}$=$I_{h}(d_{t}(\rho_{h}^{k+1} \textbf{u}_{h}^{k+1}))$ in equation (\ref{equau}), we know
\begin{equation*}
\begin{aligned}
&\|\nabla I_{h}(d_{t }(\rho_{h}^{k+1}   \textbf{u}_{h}^{k+1})) \|^{2}=(d_{t}(\rho_{h}^{k+1} \textbf{u}_{h}^{k+1}),I_{h}(d_{t} ( \rho_{h}^{k+1}\textbf{u}_{h}^{k+1}) ))= ( \rho_{h}^{k}  \textbf{u}_{h}^{k+1} \otimes\textbf{u}_{h}^{k+1}, \nabla I_{h}(d_{t} (\rho_{h}^{k+1}  \textbf{u}_{h}^{k+1} )))\\
&-( \textbf{u}_{h}^{k+1} \otimes (\frac{\rho_{2}-\rho_{1}}{2} M(\phi_{h}^{k})\nabla\omega_{h}^{k+1}), \nabla I_{h}(d_{t}( \rho_{h}^{k+1} \textbf{u}_{h}^{k+1} )))-2 (\eta (\phi_{h}^{k})D(\textbf{u}_{h}^{k+1}), D( I_{h}(d_{t} (\rho_{h}^{k+1} \textbf{u}_{h}^{k+1} ))))\\
&-\frac{1}{\mu}(\nabla\times \textbf{B}_{h}^{k+1}, I_{h}(d_{t} (\rho_{h}^{k+1}  \textbf{u}_{h}^{k+1}) )\times \textbf{B}_{h}^{k})-(\phi_{h}^{k}  \cdot \nabla\omega_{h}^{k+1},I_{h}(d_{t} (\rho_{h}^{k+1} \textbf{u}_{h}^{k+1} )))\\
&\leq  (\rho_{2}C_{3}\|\textbf{u}_{h}^{k+1}\|^{\frac{6-d}{6}}\|\nabla\textbf{u}_{h}^{k+1}\|^{\frac{d}{6}}\|\nabla\textbf{u}_{h}^{k+1}\|+\frac{(\rho_{2}-\rho_{1})M_{2}}{2}\| \textbf{u}_{h}^{k+1}\|_{L^{3}}  \|\nabla\omega_{h}^{k+1}\|_{L^{6}}+2\eta_{2}\|\nabla \textbf{u}_{h}^{k+1}\|\\
&\quad + \frac{C_{6}}{\mu}\| \textbf{B}_{h}^{k}\|^{\frac{6-d}{d}} \| \textbf{B}_{h}^{k}\|_{\textbf{H}^{1}_{n} }^{\frac{d}{6}}\| \textbf{B}_{h}^{k+1}\|_{\textbf{H}^{1}_{n} }+ \|\phi_{h}^{k}\|_{H^{1}}\|\nabla\omega_{h}^{k+1}\|    )\|\nabla I_{h}(d_{t} (\rho_{h}^{k+1} \textbf{u}_{h}^{k+1}) )\|.
\end{aligned}
\end{equation*}
Taking  $\textbf{C}_{h}=P_{B h}\textbf{C}$ for any $\textbf{C}\in \textbf{H}_{n}^{1}$ in equation, we have
\begin{equation*}
\begin{aligned}
 (d_{t}\textbf{B}_{h}^{k+1}, \textbf{C})&=(d_{t}\textbf{B}_{h}^{k+1}, P_{B h}\textbf{C})= -\frac{1}{\mu}(  \frac{1}{\sigma (\phi_{h}^{k})} \nabla\times \textbf{B}_{h}^{k+1}, \nabla\times P_{B h}\textbf{C})- \frac{1}{\mu}  (\frac{1}{ \sigma (\phi_{h}^{k})}\nabla\cdot\textbf{ B}_{h}^{k+1}, \nabla\cdot P_{B h}\textbf{C})  \\
 & \quad +(\textbf{u}_{h}^{k+1}\times \textbf{B}_{h}^{k}, \nabla\times P_{B h}\textbf{C})\\
 &\leq (\frac{1}{\mu\sigma_{1}}\|\textbf{B}_{h}^{k+1}\|_{\textbf{H}_{n}^{1}} + C_{4}C_{7}\|\textbf{u}_{h}^{k+1}\|^{\frac{4-d}{4}}\|\nabla\textbf{u}_{h}^{k+1}\|^{\frac{d}{4}} \|\textbf{B}_{h}^{k}\|^{\frac{4-d}{4}}\|\textbf{ B}_{h}^{k}\|_{\textbf{H}_{n}^{1}}^{\frac{d}{4}} )\|\textbf{C}\|_{\textbf{H}_{n}^{1}}.
\end{aligned}
\end{equation*}

Combining equations (\ref{theorem-h1}), (\ref{theorem-L2}), (\ref{theorem-faiH1}), (\ref{theorem-WH1}), we obtain equation (\ref{theorem-PRESSURE}), where
\begin{equation*}
\begin{aligned}
C_{22}&= \left(\frac{d}{6+d}((2\eta_{2})^{\frac{12}{6+d}} +C_{14}^{\frac{6}{6+d}}) +
\frac{(\frac{\rho_{2}-\rho_{1}}{2}M_{1}C_{3})^{\frac{12}{6+d}}(\frac{C_{10}}{\rho_{1}})^{\frac{6-d}{6+d}}6C_{20}^{2}}{6+d}(  C_{16}+  C_{21}^{2}) \right)T\\
&\quad +(\rho_{2}C_{3})^{\frac{12}{6+d}}(\frac{C_{10}}{\rho_{1}})^{\frac{6-d}{6+d}}\frac{C_{11}}{\eta_{1}} + (\frac{C_{6}}{\mu})^{\frac{12}{6+d}}C_{10}^{\frac{6-d}{6+d}}\mu^{\frac{18+d}{6+d}}C_{11}\sigma_{2}
+(\frac{(2\eta_{2})^{\frac{12}{6+d}}}{\eta_{1}}+ \frac{C_{14}^{\frac{6}{6+d}}}{M_{1}})\frac{C_{11}6}{(6+d)} \\
&\quad +
\frac{(\frac{\rho_{2}-\rho_{1}}{2}M_{1}C_{3})^{\frac{12}{6+d}}(\frac{C_{10}}{\rho_{1}})^{\frac{6-d}{6+d}}}{6+d}( \frac{C_{11}d}{\eta_{1}} +\frac{6C_{20}^{2}C_{11}}{M_{1}} ),\\
C_{23}&=\frac{d-2}{(\mu\sigma_{1})^{\frac{4}{d}}d}T+ \frac{2C_{11}\mu^{2}\sigma_{2}}{(\mu\sigma_{1})^{\frac{4}{d}}d}+(C_{4}C_{7})^{\frac{4}{d}}(\frac{\mu}{\rho_{1}})^{\frac{4-d}{2d}}C_{10}\frac{4-d}{d}C_{11}(\frac{1}{2\eta_{1}}+\frac{\mu^{2}\sigma_{2}}{2}).
\end{aligned}
\end{equation*}
According to inf-sup condition (\ref{41}), we can obtain the convergence of  pressure field, for $C_{19}=\max\{C_{22}, C_{23}\}$. The proof is ended.

\end{proof}
\subsection{Existence of the solution of the scheme}
By means of Leray-Schauder fixed point theorem (the Theorem 11.3 of \cite{2001Trudinger}), we obtain the existence of   fully discrete scheme (\ref{fully discrete scheme}).
\begin{lemma}\label{lemma1}
Letting $\Pi$ is a compact mapping of a Banach space $\textbf{B}$ into itself, and there exist a constant $M_{c}>0$, s.t.
\begin{equation}
\|\textbf{X}\|_{\textbf{B}}< M_{c},
\end{equation}
for all $\textbf{X}\in \textbf{B}$ and $\theta\in[0, 1]$ guaranteeing $\textbf{X}=\theta\Pi \textbf{X}$. Then the $\Pi$ has a fixed point.
\end{lemma}

\begin{The}\label{theorem-3}
Supposing  \textbf{Assumption \ref{a1}.} is valid and initial data $\phi_{0}, \textbf{u}_{0}, \textbf{B}_{0}$ satisfy $E(\phi_{0}$, $\textbf{u}_{0}$, $\textbf{B}_{0} )<\infty$. For given any $  \Delta t, h> 0$, there exist a solution ($\phi_{h}^{k+1}$, $\omega_{h}^{k+1}$, $\textbf{u}_{h}^{k+1}$, $p_{h}^{k+1}$, $\textbf{B}_{h}^{k+1}$) of the scheme (\ref{fully discrete scheme}).
\end{The}

\begin{proof}
We define  $\Pi$: $ Y_{h}\times Y_{h}\times X_{h}\times M_{h}\times W_{h}\longrightarrow  Y_{h}\times Y_{h}\times X_{h}\times M_{h}\times W_{h}$  by the   relationship $\Pi (\phi_{h}^{k+1},  \omega_{h}^{k+1}, \textbf{u}_{h}^{k+1}, p_{h}^{k+1}, \textbf{B}_{h}^{k+1})$= $(\hat{\phi}_{h}^{k+1}, \hat{\omega}_{h}^{k+1}, \hat{\textbf{u}}_{h}^{k+1}, \hat{p}_{h}^{k+1}, \hat{\textbf{B}}_{h}^{k+1})$, for $(\hat{\phi}_{h}^{k+1}$,  $\hat{\omega}_{h}^{k+1}$, $\hat{\textbf{u}}_{h}^{k+1}$, $\hat{p}_{h}^{k+1}$, $\hat{\textbf{B}}_{h}^{k+1}$)$\in  Y_{h}\times Y_{h}\times X_{h}\times M_{h}\times W_{h}$ satisfies the following scheme
\begin{subequations}\label{L-S-p-t}
\begin{align}
& 2 (\eta (\phi_{h}^{k})D(\hat{\textbf{u}}_{h}^{k+1}), D( \textbf{v}_{h})   )-(\nabla\cdot \textbf{v}_{h},  \hat{p}_{h}^{k+1}) =-(\frac{\rho_{h}^{k+1} \textbf{u}_{h}^{k+1}-\rho_{h}^{k} \textbf{u}_{h}^{k}}{\Delta t}, \textbf{v}_{h}) +( (\rho_{h}^{k}  \textbf{u}_{h}^{k+1} \otimes\textbf{u}_{h}^{k+1}), \nabla\textbf{v}_{h}) \nonumber \\
& -( (\textbf{u}_{h}^{k+1} \otimes (\frac{\rho_{2}-\rho_{1}}{2} M(\phi_{h}^{k})\nabla\omega_{h}^{k+1}), \nabla\textbf{v}_{h})  -\frac{1}{\mu}(\nabla\times \textbf{B}_{h}^{k+1}, \textbf{v}_{h}\times \textbf{B}_{h}^{k})-(\phi_{h}^{k}  \cdot \nabla\omega_{h}^{k+1}, \textbf{v}_{h}),\label{g1}\\
&(\nabla\cdot\hat{\textbf{u}}_{h}^{k+1}, q_{h}) =0,\label{g2}\\
& (\frac{\hat{\textbf{B}}_{h}^{k+1}-\textbf{B}_{h}^{k}}{\Delta t}, \textbf{C}_{h}) + \frac{1}{\mu}(  \frac{1}{\sigma (\phi_{h}^{k})} \nabla\times \hat{\textbf{B}}_{h}^{k+1}, \nabla\times \textbf{C}_{h})+ \frac{1}{\mu}  (\frac{1}{\sigma (\phi_{h}^{k})}\nabla\cdot\hat{\textbf{ B}}_{h}^{k+1}, \nabla\cdot \textbf{C}_{h}) \nonumber \\
& \quad  =(\textbf{u}_{h}^{k+1}\times \textbf{B}_{h}^{k}, \nabla\times \textbf{C}_{h}),\label{g3}\\
&(\frac{\hat{\phi}_{h}^{k+1}-\phi_{h}^{k}}{\Delta t}, \psi_{h}) + (M(\phi_{h}^{k})\nabla\hat{\omega}_{h}^{k+1}, \nabla\psi_{h})=(\phi_{h}^{k} \textbf{u}_{h}^{k+1}, \nabla \psi_{h}), \label{g4}\\
&(\hat{\omega}_{h}^{k+1}, \chi_{h}) -\gamma\varepsilon(\nabla\hat{\phi}_{h}^{k+1}, \nabla\chi_{h}) =\frac{\gamma}{\varepsilon}(f(\phi_{h}^{k+1}), \chi_{h})\label{g5},
\end{align}
\end{subequations}
for given $ (\phi_{h}^{k+1},  \omega_{h}^{k+1}, \textbf{u}_{h}^{k+1}, p_{h}^{k+1}, \textbf{B}_{h}^{k+1})\in Y_{h}\times   Y_{h}\times \textbf{X}_{h}\times M_{h}\times \textbf{W}_{h}$, and  $\forall (\psi_{h}, \chi_{h}, \textbf{v}_{h}, q_{h}, \textbf{C}_{h})\in  Y_{h}\times Y_{h}\times \textbf{X}_{h}\times M_{h}\times \textbf{W}_{h}$. Next, we will prove the map $\Pi$ satisfies the \textbf{Lemma \ref{lemma1}}  condition and then has a fixed point which is a solution of scheme (\ref{fully discrete scheme}). We  divide the proof into two steps next.

\textbf{Step 1.}
The classical Stokes problem (\ref{g1})-(\ref{g2}) and Maxwell problem (\ref{g3}) can be proved well-posed. Given
$\phi_{h}^{k+1}, \phi_{h}^{k}\in Y_{h}$, $\textbf{u}_{h}^{k+1}\in \textbf{X}_{h}$, the Cahn-Hilliard problem (\ref{g4})-(\ref{g5})  can be rewritten as
\begin{equation*}
\left\{
\begin{aligned}
&a(\hat{\phi}_{h}^{k+1}, \psi_{h})+b(\hat{\omega}_{h}^{k+1},  \psi_{h})=(f,  \psi_{h}),\\
&c(\hat{\omega}_{h}^{k+1}, \chi_{h})-b'(\hat{\phi}_{h}^{k+1},  \chi_{h}) =(g, \chi_{h}),
\end{aligned}
\right.
\end{equation*}
to finding $\hat{\phi}_{h}^{k+1}\in Y_{h}$ and $\hat{\omega}_{h}^{k+1}\in Y_{h}$ and any $\psi_{h}\in Y_{h}$, $\chi_{h}\in Y_{h}$, where
\begin{equation*}
\begin{aligned}
&a(\hat{\phi}_{h}^{k+1}, \psi_{h})=\frac{1}{\Delta t}(\hat{\phi}_{h}^{k+1}, \psi_{h}),\quad  b(\hat{\omega}_{h}^{k+1}, \psi_{h})=(M(\phi_{h}^{k})\nabla\hat{\omega}_{h}^{k+1}, \nabla\psi_{h}), \\
&c(\hat{\omega}_{h}^{k+1}, \chi_{h})=(\hat{\omega}_{h}^{k+1}, \chi_{h}),\quad  b'(\hat{\phi}_{h}^{k+1}, \chi_{h}) =\gamma\varepsilon(\nabla\hat{\phi}_{h}^{k+1}, \nabla\chi_{h}),  \\
&(f, \psi_{h})= (\phi_{h}^{k} \textbf{u}_{h}^{k+1}, \nabla \psi_{h})+\frac{1}{\Delta t}(\phi_{h}^{k}, \psi_{h}),\quad (g, \chi_{h})=\frac{\gamma}{\varepsilon}(f(\phi_{h}^{k+1}),\chi_{h}).
\end{aligned}
\end{equation*}
We can derive  $\|\nabla \psi_{h}\|=0$ for any $\psi_{h}\in \{\psi_{h}\in Y_{h}, b'( \psi_{h}, \chi_{h})=0, \forall\chi_{h}\in  Y_{h}\}$ and $c(\cdot, \cdot)$ is coercive
\begin{equation*}
c(\psi_{h},\psi_{h})=\|\psi_{h}\|^{2}=\|\psi_{h}\|_{H^{1}}.
\end{equation*}
Considering $a(\cdot, \cdot)$ is continuous, positive semi-definite and symmetric,  $b(\cdot, \cdot)$, $b'(\cdot, \cdot)$, $c(\cdot, \cdot)$ are continuous.  Refer to \cite{2019A} and the Section II.1.2 of  \cite{1991MIX},  the Cahn-Hilliard problem  (\ref{g4})-(\ref{g5}) is well-posed for given $\phi_{h}^{k+1}$,  $\phi_{h}^{k}\in Y_{h}$ and $\textbf{u}_{h}^{k+1}\in \textbf{X}_{h}$. The scheme (\ref{L-S-p-t}) is considered in finite element space, thus  $\Pi$ is compact map.

\textbf{Step 2.} Next, we need to prove the compressibility of the map $\Pi$ and the boundedness of    $(\hat{\phi}_{h}^{k+1},  \hat{\omega}_{h}^{k+1}, \hat{\textbf{u}}_{h}^{k+1}, \hat{p}_{h}^{k+1}, \hat{\textbf{B}}_{h}^{k+1})\in Y_{h}\times  Y_{h}\times \textbf{X}_{h}\times M_{h}\times \textbf{W}_{h}$ which satisfy the
\begin{equation}\label{boundedness}
 \|\hat{\textbf{u}}_{h}^{k+1}\|_{H^{1}_{0}}^{2}+\|\hat{p}_{h}^{k+1}\|^{2}+\|\hat{\textbf{B}}_{h}^{k+1}\|_{H^{1}_{n}}^{2}+\|\hat{\phi}_{h}^{k+1}\|_{H^{1}}^{2}+\| \hat{\omega}_{h}^{k+1}\|_{H^{1}}^{2}< M_{c},
\end{equation}
for  $M_{c}>0$ is a constant, which is  independent of $\theta$ and $(\hat{\phi}_{h}^{k+1}, \hat{\omega}_{h}^{k+1}, \hat{\textbf{u}}_{h}^{k+1}, \hat{p}_{h}^{k+1}, \hat{\textbf{B}}_{h}^{k+1})$. If $\theta$=0, the above inequality (\ref{boundedness}) naturally holds. If  $\theta\in(0, 1]$, we consider
\begin{equation*}
\Pi (\hat{\phi}_{h}^{k+1},  \hat{\omega}_{h}^{k+1}, \hat{\textbf{u}}_{h}^{k+1}, \hat{p}_{h}^{k+1}, \hat{\textbf{B}}_{h}^{k+1})=\frac{1}{\theta} (\hat{\phi}_{h}^{k+1},  \hat{\omega}_{h}^{k+1}, \hat{\textbf{u}}_{h}^{k+1}, \hat{p}_{h}^{k+1}, \hat{\textbf{B}}_{h}^{k+1}),
\end{equation*}
thus the equations can be rewritten as
\begin{equation}\label{L-S-p-t-z}
\begin{aligned}
&\theta(\frac{\hat{\rho}_{h}^{k+1} \hat{\textbf{u}}_{h}^{k+1}-\rho_{h}^{k} \textbf{u}_{h}^{k}}{\Delta t},\textbf{v}_{h}) -\theta( \rho_{h}^{k}  \hat{\textbf{u}}_{h}^{k+1} \otimes\hat{\textbf{u}}_{h}^{k+1}, \nabla\textbf{v}_{h})+\theta( \hat{\textbf{u}}_{h}^{k+1} \otimes (\frac{\rho_{2}-\rho_{1}}{2} M(\phi_{h}^{k})\nabla\hat{\omega}_{h}^{k+1}), \nabla\textbf{v}_{h}) \nonumber \\
&  +2 (\eta (\phi_{h}^{k})D(\hat{\textbf{u}}_{h}^{k+1}), D( \textbf{v}_{h})   )-(\nabla\cdot \textbf{v}_{h},  \hat{p}_{h}^{k+1}) +\frac{\theta}{\mu}(\nabla\times \hat{\textbf{B}}_{h}^{k+1}, \textbf{v}_{h}\times \textbf{B}_{h}^{k})+\theta(\phi_{h}^{k}  \cdot \nabla\hat{\omega}_{h}^{k+1}, \textbf{v}_{h})=0,  \\
&(\nabla\cdot\hat{\textbf{u}}_{h}^{k+1}, q_{h}) =0,\\
& (\frac{\hat{\textbf{B}}_{h}^{k+1}-\theta\textbf{B}_{h}^{k}}{\Delta t}, \textbf{C}_{h}) + \frac{1}{\mu}(  \frac{1}{\sigma (\phi_{h}^{k})} \nabla\times \hat{\textbf{B}}_{h}^{k+1}, \nabla\times \textbf{C}_{h})+ \frac{1}{\mu}  (\frac{1}{ \sigma (\phi_{h}^{k}) }\nabla\cdot\hat{\textbf{B}}_{h}^{k+1}, \nabla\cdot \textbf{C}_{h}) \nonumber \\
& \quad  -\theta(\hat{\textbf{u}}_{h}^{k+1}\times \textbf{B}_{h}^{k}, \nabla\times \textbf{C}_{h})=0,\\
&(\frac{\hat{\phi}_{h}^{k+1}-\theta\phi_{h}^{k}}{\Delta t}, \psi_{h}) -\theta(\phi_{h}^{k} \hat{\textbf{u}}_{h}^{k+1}, \nabla \psi_{h})+ (M(\phi_{h}^{k})\nabla\hat{\omega}_{h}^{k+1}, \nabla\psi_{h})=0,  \\
&\gamma\varepsilon(\nabla\hat{\phi}_{h}^{k+1}, \nabla\chi_{h})+ \frac{\gamma\theta}{\varepsilon}((\hat{\phi}_{h}^{k+1})^{3}-\phi_{h}^{k}, \chi_{h})=(\hat{\omega}_{h}^{k+1}, \chi_{h}),
\end{aligned}
\end{equation}
taking $(\psi_{h}, \chi_{h}, \textbf{v}_{h}, q_{h}, \textbf{C}_{h})$=$2\Delta t(\hat{\omega}_{h}^{k+1}, \frac{\hat{\phi}_{h}^{k+1}-\theta\phi_{h}^{k}}{\Delta t}, \hat{\textbf{u}}_{h}^{k+1}, \hat{p}_{h}^{k+1},  \frac{1}{\mu}\hat{\textbf{B}}_{h}^{k+1})$ in above equations, and taking sum of the obtained results, combining with the formulas (\ref{e-1}) and (\ref{e-2}), we have
\begin{equation*}
\begin{aligned}
&\theta \hat{\rho}_{h}^{k+1} \|\hat{\textbf{u}}_{h}^{k+1}\|^{2}+\theta \rho_{h}^{k} \|\hat{\textbf{u}}_{h}^{k+1}- \textbf{u}_{h}^{k}\|^{2}+4\Delta t\eta_{1}\|D (\hat{\textbf{u}}_{h}^{k+1})\|^{2} +\frac{1}{\mu}\|\hat{\textbf{B}}_{h}^{k+1}\|^{2}+ \frac{1}{\mu}\|\hat{\textbf{B}}_{h}^{k+1}-\theta \textbf{B}_{h}^{k}\|^{2} \\
&\quad+\frac{2 \Delta t}{\mu^{2}\sigma_{2}} \|\hat{\textbf{B}}_{h}^{k+1}\|_{H_{n}^{1}}^{2}+2\Delta tM_{1}\|\nabla\hat{\omega}_{h}^{k+1}\|^{2} +\gamma\varepsilon\|\nabla\hat{\phi}_{h}^{k+1}\|^{2}+\gamma\varepsilon\|\nabla(\hat{\phi}_{h}^{k+1}-\theta \phi_{h}^{k})\|^{2} \\
&\quad  +\frac{\gamma\theta}{2\varepsilon}\left(\| (\hat{\phi}_{h}^{k+1})^{2}-1 \|^{2} +  \| (\hat{\phi}_{h}^{k+1})^{2}-(\theta \phi_{h}^{k})^{2} \|^{2}  \right)+ \frac{\gamma\theta}{\varepsilon} \left( \|\hat{\phi}_{h}^{k+1} (\hat{\phi}_{h}^{k+1}-\theta \phi_{h}^{k})\|^{2} +\|\hat{\phi}_{h}^{k+1}-\theta \phi_{h}^{k}\|^{2} \right)  \\
&\leq\rho_{h}^{k} \|\textbf{u}_{h}^{k}\|^{2}+\frac{1}{\mu}\| \textbf{B}_{h}^{k}\|^{2}+\gamma\varepsilon\|\nabla\phi_{h}^{k}\|^{2}+\frac{\gamma\theta}{2\varepsilon}\| (\theta\phi_{h}^{k})^{2}-1 \|^{2}+ \frac{2\gamma\theta (1-\theta)}{\varepsilon}(\hat{\phi}_{h}^{k+1}-\theta\phi_{h}^{k}, \phi_{h}^{k}  ),\\&\leq\rho_{h}^{k} \|\textbf{u}_{h}^{k}\|^{2}+\frac{\| \textbf{B}_{h}^{k}\|^{2}}{\mu}+\gamma\varepsilon\|\nabla\phi_{h}^{k}\|^{2}+\frac{\gamma\theta}{2\varepsilon}\| (\theta\phi_{h}^{k})^{2}-1 \|^{2}+ \frac{2\gamma\theta (1-\theta)^{2}}{\varepsilon}\|\phi_{h}^{k}\|^{2}+\frac{\gamma\theta }{2\varepsilon}\|\hat{\phi}_{h}^{k+1}-\theta\phi_{h}^{k}\|^{2}.
\end{aligned}
\end{equation*}
Thinking about $\theta\in (0, 1]$, we obtain
\begin{equation*}
\begin{aligned}
&\theta \hat{\rho}_{h}^{k+1} \|\hat{\textbf{u}}_{h}^{k+1}\|^{2}+\theta \rho_{h}^{k} \|\hat{\textbf{u}}_{h}^{k+1}- \textbf{u}_{h}^{k}\|^{2}+4\Delta t\eta_{1}\|D (\hat{\textbf{u}}_{h}^{k+1})\|^{2} +\frac{1}{\mu}\|\hat{\textbf{B}}_{h}^{k+1}\|^{2}+ \frac{1}{\mu}\|\hat{\textbf{B}}_{h}^{k+1}-\theta \textbf{B}_{h}^{k}\|^{2} \\
&\quad +\frac{2 \Delta t}{\mu^{2}\sigma_{2}} \|\hat{\textbf{B}}_{h}^{k+1}\|_{H_{n}^{1}}^{2}+2\Delta tM_{1}\|\nabla\hat{\omega}_{h}^{k+1}\|^{2} +\gamma\varepsilon\|\nabla\hat{\phi}_{h}^{k+1}\|^{2}+\gamma\varepsilon\|\nabla(\hat{\phi}_{h}^{k+1}-\theta \phi_{h}^{k})\|^{2} \\
&\quad  +\frac{\gamma\theta}{2\varepsilon}\left(\| (\hat{\phi}_{h}^{k+1})^{2}-1 \|^{2} +  \| (\hat{\phi}_{h}^{k+1})^{2}-(\theta \phi_{h}^{k})^{2} \|^{2}  \right)+ \frac{\gamma\theta}{\varepsilon} \left( \|\hat{\phi}_{h}^{k+1} (\hat{\phi}_{h}^{k+1}-\theta \phi_{h}^{k})\|^{2} +\frac{1}{2}\|\hat{\phi}_{h}^{k+1}-\theta \phi_{h}^{k}\|^{2} \right)  \\
&\leq\rho_{h}^{k} \|\textbf{u}_{h}^{k}\|^{2}+\frac{1}{\mu}\| \textbf{B}_{h}^{k}\|^{2}+\gamma\varepsilon\|\nabla\phi_{h}^{k}\|^{2}+\frac{\gamma }{2\varepsilon}\left(\| \phi_{h}^{k}\|^{4}+2 \| \phi_{h}^{k}\|^{2}+|\Omega|\right)\\
&:=M_{c1},
\end{aligned}
\end{equation*}
here $M_{c1}$ is a positive  constant, which  independent of $\theta$ and $(\hat{\phi}_{h}^{k+1}, \hat{\omega}_{h}^{k+1}, \hat{\textbf{u}}_{h}^{k+1}, \hat{p}_{h}^{k+1}, \hat{\textbf{B}}_{h}^{k+1})$.

Then we need to prove the boundedness of $\|\hat{\phi}_{h}^{k+1}\|^{2}$ and $\|\hat{\omega}_{h}^{k+1}\|^{2}$. Taking  $\psi_{h}$=$2\Delta t \hat{\phi}_{h}^{k+1}$ and $\chi_{h}$=$\frac{2\Delta t M(\phi_{h}^{k})\hat{\omega}_{h}^{k+1}}{\gamma\varepsilon}$,  we have
\begin{equation*}
\begin{aligned}
&\|\hat{\phi}_{h}^{k+1}\|^{2}-\theta^{2}\|\phi_{h}^{k}\|^{2}+\|\hat{\phi}_{h}^{k+1}-\theta \phi_{h}^{k}\|^{2}+\frac{2\Delta t}{\gamma\varepsilon}\|\sqrt{M(\phi_{h}^{k})}\hat{\omega}_{h}^{k+1}\|^{2}\\
&=2\Delta t\theta (\phi_{h}^{k} \hat{\textbf{u}}_{h}^{k+1}, \nabla\hat{\phi}_{h}^{k+1})+\frac{2\Delta t\theta  }{ \varepsilon^{2}}( (\hat{\phi}_{h}^{k+1})^{3}-\phi_{h}^{k},  M (\phi_{h}^{k})\hat{\omega}_{h}^{k+1}).
\end{aligned}
\end{equation*}
We introduce the following equality
\begin{equation}
(a^{k+1})^{4}=((a^{k+1})^{2}-1)^{2}+2( a^{k+1}-\theta a^{k} )^{2}+4\theta ( a^{k+1}-\theta a^{k}) a^{k}-1+2\theta^{2} (a^{k})^{2}.
\end{equation}
Thus, we can conclude the following inequality by means of above equality and H\"{o}lder inequality
\begin{equation*}
\begin{aligned}
2\Delta t\theta (\phi_{h}^{k} \hat{\textbf{u}}_{h}^{k+1}, \nabla\hat{\phi}_{h}^{k+1})&\leq 2\Delta t\theta \|\phi_{h}^{k}\|_{L^{3}} \|\hat{\textbf{u}}_{h}^{k+1}\|_{L^{6}} \|\nabla\hat{\phi}_{h}^{k+1}\|,\\
\frac{2\Delta t\theta }{ \varepsilon^{2}}( (\hat{\phi}_{h}^{k+1})^{3}-\phi_{h}^{k},   M (\phi_{h}^{k}) \hat{\omega}_{h}^{k+1})&  \leq \frac{2\Delta t\theta M_{2} }{ \varepsilon^{2}}(\|\hat{\phi}_{h}^{k+1}\|_{L^{4}}^{3}\|\hat{\omega}_{h}^{k+1}\|_{L^{4}}+\|\phi_{h}^{k}\|     \|\hat{\omega}_{h}^{k+1}\|)\\
&\leq \frac{2\Delta t\theta M_{2} }{ \varepsilon^{2}} ( \|\hat{\phi}_{h}^{k+1}\|_{L^{4}}^{6}+ \|\hat{\omega}_{h}^{k+1}\|_{L^{4}}^{2} +\|\phi_{h}^{k}\|^{2} + \|\hat{\omega}_{h}^{k+1}\|^{2})\\
&\leq \frac{C_{24}\Delta t\theta  }{ \varepsilon^{2}} ( \|(\hat{\phi}_{h}^{k+1})^{2}-1 \|^{2}+  \|\hat{\phi}_{h}^{k+1}- \theta \phi_{h}^{k} \|^{2} )^{\frac{3}{2}}\\
&\quad +\frac{C_{25}\Delta t\theta  }{ \varepsilon^{2}} (\theta^{3}\|\phi_{h}^{k} \|^{3}+\|\phi_{h}^{k} \|^{2}+|\Omega|^{\frac{3}{2}})  + \frac{C_{26}\Delta t\theta  }{ \varepsilon^{2}} \| \omega_{h}^{k+1}\|^{2}_{H^{1}},
\end{aligned}
\end{equation*}
here $C_{24}$, $C_{25}$, $C_{26}$ are positive constants. In summary, it can be concluded that
\begin{equation*}
\begin{aligned}
\|\hat{\phi}_{h}^{k+1}\|^{2} +\frac{2\Delta tM_{1} }{\gamma\varepsilon}\|\hat{\omega}_{h}^{k+1}\|^{2}&\leq \|\phi_{h}^{k}\|^{2}+2\Delta t \|\phi_{h}^{k}\|_{L^{3}} \|\hat{\textbf{u}}_{h}^{k+1}\|_{L^{6}} \|\nabla\hat{\phi}_{h}^{k+1}\|+ \frac{C_{27}\Delta t }{ \varepsilon^{2}} \|\nabla \omega_{h}^{k+1}\|^{2}\\
& \quad +\frac{C_{28}\Delta t }{ \varepsilon^{2}} ( \|(\hat{\phi}_{h}^{k+1})^{2}-1 \|^{2}+  \|\hat{\phi}_{h}^{k+1}-\theta \phi_{h}^{k} \|^{2} )^{\frac{3}{2}}\\
&\quad +\frac{C_{29}\Delta t  }{ \varepsilon^{2}} ( \|\phi_{h}^{k} \|^{3}+\|\phi_{h}^{k} \|^{2}+|\Omega|^{\frac{3}{2}}),
\end{aligned}
\end{equation*}
for   $C_{27}$, $C_{28}$, $C_{29}$ are positive constants. Thus, we obtain  $\|\hat{\phi}_{h}^{k+1}\|_{H^{1}}^{2}+\| \hat{\omega}_{h}^{k+1}\|_{H^{1}}^{2}\leq M_{c}$ is established. Based on inf-sup condition (\ref{41}), we can estimate
\begin{equation}
\beta_{0}\|\hat{p}_{h}^{k+1} \|\leq M_{c}.
\end{equation}

Synthesize the above analysis, we can obtain the equation (\ref{boundedness}) is proved. And the $\Pi$ has a fixed point, which is a solution to the scheme (\ref{fully discrete scheme}).
\end{proof}

%
\section{Convergence of the numerical scheme and existence of the weak solution}
This section mainly give the   convergence results of the numerical scheme (\ref{fully discrete scheme}) and existence of  weak solution to the unsteady incompressible CH-MHD with large density ratios model (\ref{TWO-PHASE MHD})-(\ref{2-boundary}).  Firstly, we give  $\{\phi_{h\Delta t}(\cdot, t), \rho_{h\Delta t}(\cdot, t), \textbf{u}_{h\Delta t}(\cdot, t), \textbf{B}_{h\Delta t}(\cdot, t)\}$ are the piece linear interpolation of the fully discrete finite element solution $\{\phi_{h}^{m+1}, \rho_{h}^{m+1}, \textbf{u}_{h}^{m+1}, \textbf{B}_{h}^{m+1} \}$, $m$=0, 1, $\cdots$, N-1, for any $t\in [t^{m}, t^{m+1}]$.

\begin{equation*}
\begin{aligned}
&\phi_{h\Delta t}(\cdot, t)=\frac{t-t^{m}}{\Delta t}\phi_{h}^{m+1}(\cdot)+ \frac{t^{m+1}-t}{\Delta t}\phi_{h}^{m}(\cdot),\quad \rho_{h\Delta t}(\cdot, t)=\frac{t-t^{m}}{\Delta t}\rho_{h}^{m+1}(\cdot)+ \frac{t^{m+1}-t}{\Delta t}\rho_{h}^{m}(\cdot),\\
&\textbf{u}_{h\Delta t}(\cdot, t)=\frac{t-t^{m}}{\Delta t}\textbf{u}_{h}^{m+1}(\cdot)+ \frac{t^{m+1}-t}{\Delta t}\textbf{u}_{h}^{m}(\cdot), \quad \textbf{B}_{h\Delta t}(\cdot, t)=\frac{t-t^{m}}{\Delta t}\textbf{B}_{h}^{m+1}(\cdot)+ \frac{t^{m+1}-t}{\Delta t}\textbf{B}_{h}^{m}(\cdot).
\end{aligned}
\end{equation*}
Another $\{\bar{\phi}_{h\Delta t}(\cdot, t)$, $\bar{\rho}_{h\Delta t}(\cdot, t)$, $\bar{\omega}_{h\Delta t}(\cdot, t)$, $\bar{\textbf{u}}_{h\Delta t}(\cdot, t)$, $\bar{p}_{h\Delta t}(\cdot, t)$,  $\bar{\textbf{B}}_{h\Delta t}(\cdot, t)\}$ and $\{\bar{\bar{\phi}}_{h\Delta t}(\cdot, t)$, $\bar{\bar{\rho}}_{h\Delta t}(\cdot, t)$,   $\bar{\bar{\textbf{u}}}_{h\Delta t}(\cdot, t)$,   $\bar{\bar{\textbf{B}}}_{h\Delta t}(\cdot, t)\}$ are the piecewise constant extensions of $\{\phi_{h}^{m+1}$, $\rho_{h}^{m+1}$, $\omega_{h}^{m+1}$, $\textbf{u}_{h}^{m+1}$, $p_{h}^{m+1}$, $\textbf{B}_{h}^{m+1} \}$ and $\{\phi_{h}^{m}$, $\rho_{h}^{m}$,  $\textbf{u}_{h}^{m}$,  $\textbf{B}_{h}^{m} \}$, $m=0, 1,\cdots, N-1$, and the $t\in (t^{m}, t^{m+1}]$,
\begin{equation*}
\begin{aligned}
&\bar{\phi}_{h\Delta t}(\cdot, t)= \phi_{h}^{m+1}(\cdot),\quad \bar{\rho}_{h\Delta t}(\cdot, t)= \rho_{h}^{m+1}(\cdot),\quad \bar{\omega}_{h\Delta t}(\cdot, t)= \omega_{h}^{m+1}(\cdot),\\
& \bar{\textbf{u}}_{h\Delta t}(\cdot, t)= \textbf{u}_{h}^{m+1}(\cdot),\quad \bar{p}_{h\Delta t}(\cdot, t)= p_{h}^{m+1}(\cdot),\quad \bar{\textbf{B}}_{h\Delta t}(\cdot, t)= \textbf{B}_{h}^{m+1}(\cdot),\\
&\bar{\bar{\phi}}_{h\Delta t}(\cdot, t)= \phi_{h}^{m}(\cdot),\quad \bar{\bar{\rho}}_{h\Delta t}(\cdot, t)= \rho_{h}^{m}(\cdot),\quad  \bar{\bar{\textbf{u}}}_{h\Delta t}(\cdot, t)= \textbf{u}_{h}^{m}(\cdot),\quad \bar{\bar{\textbf{B}}}_{h\Delta t}(\cdot, t)= \textbf{B}_{h}^{m}(\cdot).\\
\end{aligned}
\end{equation*}

For briefness, the convergence sequences are denoted by the same symbols. In addition, weak convergence and weak $\ast$ convergence are collectively referred to as weak convergence.
\begin{lemma}\label{Weak}
(\textbf{Weak Convergence}) For the sequences $\{\bar{\bar{\phi}}_{h\Delta t}$, $\bar{\bar{\rho}}_{h\Delta t}$,   $\bar{\bar{\textbf{u}}}_{h\Delta t}$,   $\bar{\bar{\textbf{B}}}_{h\Delta t}\}$, $\{\bar{\phi}_{h\Delta t}$, $\bar{\rho}_{h\Delta t}$, $\bar{\omega}_{h\Delta t}$, $\bar{\textbf{u}}_{h\Delta t}$, $\bar{p}_{h\Delta t}$,  $\bar{\textbf{B}}_{h\Delta t}\}$ and $\{\phi_{h\Delta t}$, $\rho_{h\Delta t}$, $\textbf{u}_{h\Delta t}$, $\textbf{B}_{h\Delta t}\}$, there exist convergence subsequences which are satisfying the following weak convergence
\begin{subequations}
\begin{align}
&\bar{\bar{\phi}}_{h\Delta t},   \bar{\phi}_{h\Delta t}, \phi_{h\Delta t}\rightharpoonup \ast\phi \quad \rm in\, L^{\infty}(0, T; H^{1}),\label{5-a}\\
&(\phi_{h\Delta t})_{t}\rightharpoonup  \phi_{t}\quad  \rm in\, L^{2}(0, T; (H^{1})'), \label{5-b}\\
&\bar{\bar{\rho}}_{h\Delta t},   \bar{\rho}_{h\Delta t}, \rho_{h\Delta t}\rightharpoonup \ast\rho \quad  \rm in\, L^{\infty}(0, T; H^{1}),\\
&(\rho_{h\Delta t})_{t}\rightharpoonup  \rho_{t}\quad  \rm in\, L^{2}(0, T; (H^{1})'),\\
&\bar{\omega}_{h\Delta t}\rightharpoonup \omega\quad  \rm in\, L^{2}(0, T; H^{1}),\\
&\bar{\bar{\textbf{u}}}_{h\Delta t},   \bar{\textbf{u}}_{h\Delta t}, \textbf{u}_{h\Delta t}\rightharpoonup \ast\textbf{u} \quad  \rm in\, L^{\infty}(0, T; L^{2}),\\
&\bar{\bar{\textbf{u}}}_{h\Delta t},   \bar{\textbf{u}}_{h\Delta t}, \textbf{u}_{h\Delta t}\rightharpoonup \textbf{u} \quad  \rm in\, L^{2}(0, T; H_{0}^{1}),\label{u1}\\
&(\rho_{h\Delta t}\textbf{u}_{h\Delta t})_{t}\rightharpoonup \rho\textbf{u} \quad  \rm in\, L^{\frac{12}{6+d}}(0, T;  (H_{0}^{1})'),    \label{rhou}\\
&\bar{p}_{h\Delta t}\rightharpoonup p\quad  \rm in\, L^{\frac{12}{6+d}}(0, T; L_{0}^{2}),\\
&\bar{\bar{\textbf{B}}}_{h\Delta t},   \bar{\textbf{B}}_{h\Delta t}, \textbf{B}_{h\Delta t}\rightharpoonup \ast\textbf{B} \quad  \rm in\, L^{\infty}(0, T; L^{2}),\\
&\bar{\bar{\textbf{B}}}_{h\Delta t},   \bar{\textbf{B}}_{h\Delta t}, \textbf{B}_{h\Delta t}\rightharpoonup \textbf{B} \quad  \rm in\, L^{2}(0, T; H_{n}^{1}),\label{B2}\\
&(\textbf{B}_{h\Delta t})_{t}\rightharpoonup \textbf{B}_{t} \quad  \rm in\, L^{\frac{4}{d}}(0, T; (H_{n}^{1})'), \label{B}\\
\end{align}
\end{subequations}
when  $h, \Delta t\rightarrow 0$ and  $\rightharpoonup\ast$ means weak $\ast$ convergence.
\end{lemma}

\begin{proof}
We   present the proof of   equation (\ref{5-a}), and the others are similar with it. Considering above $\{\bar{\bar{\phi}}_{h\Delta t}\}$, $\{ \bar{\phi}_{h\Delta t}\}$, $\{ \phi_{h\Delta t}\}$ are bounded sequences in $L^{\infty}(0, T; H^{1})$, and  $\{\bar{\bar{\phi}}_{h\Delta t}\}$, $\{ \bar{\phi}_{h\Delta t}\}$, $\{ \phi_{h\Delta t}\}$ weakly $\ast$ converge to $\phi_{2}$, $\phi_{1}$, $\phi$ in $L^{\infty}(0, T; H^{1})$ respectively. Thus, we have
\begin{equation}\label{limit}
\lim_{h, \Delta t\rightarrow 0}\int_{0}^{T}(\phi_{h\Delta t}-\bar{\phi}_{h\Delta t}, \psi)\mbox{d}t=\int_{0}^{T}(\phi - \phi_{1}, \psi)\mbox{d}t, \quad \forall\psi \in L^{1}(0, T; (H^{1})').
\end{equation}
According to   $L^{2}\hookrightarrow (H^{1})'$ with continuous injection, $L^{\infty}(0, T; H^{1})\hookrightarrow L^{1}(0, T; (H^{1})')$, and equation (\ref{theorem-faichaH1}), we obtain
\begin{equation*}
\begin{aligned}
\int_{0}^{T}(\phi_{h\Delta t}-\bar{\phi}_{h\Delta t}, \phi -\phi_{1})\mbox{d}t&\leq \int_{0}^{T}\|\phi_{h\Delta t}-\bar{\phi}_{h\Delta t}\|_{H^{1}}\|\phi -\phi_{1}\|_{(H^{1})'}\mbox{d}t\\
&\leq C_{30}\int_{0}^{T}\|\phi_{h\Delta t}-\bar{\phi}_{h\Delta t}\|_{H^{1}}\|\phi -\phi_{1}\|_{L^{2}}\mbox{d}t\\
&\leq C_{31}\|\phi - \phi_{1}\|_{L^{\infty}(L^{2})}\sum_{n=1}^{N}\int_{t_{n}}^{t_{n+1}}\frac{t_{n+1}-t}{\Delta t}\|\phi_{h}^{n+1}-\phi_{h}^{n}\|_{H^{1}}\mbox{d}t\\
&\leq C_{32}\Delta t^{\frac{1}{2}}\|\phi - \phi_{1}\|_{L^{\infty}(L^{2})}(\sum_{n=1}^{N}\|\phi_{h}^{n+1}-\phi_{h}^{n}\|_{H^{1}}^{2})^{\frac{1}{2}}\\
& \stackrel{\Delta t\rightarrow 0}{\longrightarrow}0,
\end{aligned}
\end{equation*}
for  $C_{30}$, $C_{31}$ and $C_{32}$ are positive constants. Letting $\psi=\phi - \phi_{1}$ in equation (\ref{limit}),  we obtain that $\phi$ and $\phi_{1}$ are equal. Similar to the above derivation,  it can  conclude that $\phi$ and $\phi_{2}$ also are equal. The proof of the equation (\ref{rhou}), see for instance \cite{2011Existence}. I have not give  further details here.
\end{proof}

\begin{lemma}\label{Strong}
(\textbf{Strong Convergence}) For the sequences $\{\bar{\bar{\phi}}_{h\Delta t}$, $\bar{\bar{\rho}}_{h\Delta t}$,   $\bar{\bar{\textbf{u}}}_{h\Delta t}$,   $\bar{\bar{\textbf{B}}}_{h\Delta t}\}$, $\{\bar{\phi}_{h\Delta t}$, $\bar{\rho}_{h\Delta t}$, $\bar{\textbf{u}}_{h\Delta t}$, $\bar{\textbf{B}}_{h\Delta t}\}$ and $\{\phi_{h\Delta t}$, $\rho_{h\Delta t}$, $\textbf{u}_{h\Delta t}$, $\textbf{B}_{h\Delta t}\}$, there exist convergence subsequences which are satisfying the following strong convergence
\begin{subequations}
\begin{align}
&\phi_{h\Delta t} \rightarrow \phi \quad  \rm in\, C(0, T; L^{p}),\label{strong1}\\
&\bar{\bar{\phi}}_{h\Delta t},   \bar{\phi}_{h\Delta t} \rightarrow\phi \quad  \rm in\, L^{\infty}(0, T; L^{p}),\label{strongphi}\\
&\rho_{h\Delta t} \rightarrow \rho \quad  \rm in\, C(0, T; L^{p}),\label{strongrho2}\\
&\bar{\bar{\rho}}_{h\Delta t},   \bar{\rho}_{h\Delta t} \rightarrow\rho \quad  \rm in\, L^{\infty}(0, T; L^{p}), \label{rhog}\\
&\bar{\bar{\textbf{u}}}_{h\Delta t},   \bar{\textbf{u}}_{h\Delta t}, \textbf{u}_{h\Delta t}\rightarrow \textbf{u} \quad \rm in\, L^{2}(0, T; L^{p}),\label{t-u}\\
&\bar{\bar{\textbf{B}}}_{h\Delta t},   \bar{\textbf{B}}_{h\Delta t}, \textbf{B}_{h\Delta t}\rightarrow\textbf{B} \quad \rm in\, L^{2}(0, T; L^{p}), \label{t-B}
\end{align}
\end{subequations}
when  $h, \Delta t\rightarrow 0$ and   $p\in [1, \frac{2d}{d-2})$.
\end{lemma}

\begin{proof}
Inspired by \cite{1987Compact} and combined with the definition of $\rho$, we know the equations (\ref{strong1}) and (\ref{strongrho2}) are valid. Then we show the proof procedure for equation (\ref{strongphi}). According to equation (\ref{strong1}),   $\{\phi_{h\Delta t} \}$ is compact in $C(0, T; L^{p})$, namely, for $\forall \kappa> 0$, there is $\delta>0$, s.t. for all $h, \Delta t>0$, we have $\|\phi_{h\Delta t}(t1)-\phi_{h\Delta t}(t2)\|_{L^{p}}\leq \kappa$, for  $t1, t2\in [0, T]$ and $|t1-t2|\leq\delta$. Thus for $\forall\kappa>0$, there is $\delta>0$ such that the following estimate hold true
\begin{equation*}
\|\bar{\bar{\phi}}_{h\Delta t}- \phi_{h\Delta t}\|_{L^{\infty}(L^{p})}=\| \bar{\phi}_{h\Delta t}- \phi_{h\Delta t}\|_{L^{\infty}(L^{p})}=ess\sup_{1\leq m\leq N}\|  \phi_{h }^{m+1}- \phi_{h }^{m}\|_{ L^{p} }\leq\kappa,
\end{equation*}
where  $\Delta t\leq\delta$. Obviously, the equation (\ref{strongphi}) is effective. And then, we give $\{\bar{\textbf{u}}_{h\Delta t}\}$ strong converges to $ \textbf{u} $ in $L^{2}(0, T; L^{p})$. For any $p\in (1, \frac{2d}{d-2})$, taking $p_{1}\in (p, \frac{2d}{d-2})$ and by feat of interpolation inequality (\ref{inter}), Young inequality and \textbf{Theorem \ref{t-fully}}, we derive
\begin{equation*}
\begin{aligned}
\|\bar{\textbf{u}}_{h\Delta t}- \textbf{u}_{h\Delta t}\|^{2}_{L^{2}(L^{p})}&=\sum_{n=1}^{N}\int_{t_{n}}^{t_{n+1}}(\frac{t_{n+1}-t}{\Delta t})^{2}\|\textbf{u}_{h}^{n+1}-\textbf{u}_{h}^{n}\|^{2}_{L^{p}}\mbox{d}t\\
&\leq C_{33}\Delta t \sum_{n=1}^{N}( \|\textbf{u}_{h}^{n+1}-\textbf{u}_{h}^{n}\|_{L^{1}}^{\theta}  \|\textbf{u}_{h}^{n+1}-\textbf{u}_{h}^{n}\|_{L^{p_{1}}}^{1-\theta} )^{2}\\
&\leq C_{34}\Delta t^{\theta} (\sum_{n=1}^{N}  \|\textbf{u}_{h}^{n+1}-\textbf{u}_{h}^{n}\|_{L^{2}}^{2} )^{\theta }( \sum_{n=1}^{N} \Delta t\|\nabla (\textbf{u}_{h}^{n+1}-\textbf{u}_{h}^{n})\|_{L^{2}}^{2}   )^{1-\theta}\\
&  \stackrel{\Delta t\rightarrow 0}{\longrightarrow}0,
\end{aligned}
\end{equation*}
where $C_{33}$ and $C_{34}$ are positive constants and  $\theta=\frac{p_{1}-p}{p(p_{1}-1)}$. Similar with above proof, the $\{\bar{\bar{\textbf{u}}}_{h\Delta t}\}$ strong converges to $ \textbf{u} $ in $L^{2}(0, T; L^{p})$. Therefore the equation (\ref{t-u}) is holding. Similar to (\ref{t-u}), the equation (\ref{t-B}) is valid.
\end{proof}

\begin{The}
Hypothesis the inf-sup condition and assumption (\ref{equation4}) be effective and initial data $\phi_{0},  \textbf{u}_{0}, \textbf{B}_{0}$ satisfy the $E(\phi_{0}, \textbf{u}_{0}, \textbf{B}_{0} )<\infty$. There exist a subsequence of $\{(\phi_{h\Delta t}$, $\bar{\rho}_{h\Delta t}$ $\bar{\omega}_{h\Delta t}$,    $\bar{\textbf{u}}_{h\Delta t}$, $\bar{p}_{h\Delta t}$, $\textbf{B}_{h\Delta t})\}$,  which have an accumulation point $(\phi, \rho, \omega, \textbf{u}, p, \textbf{B})$. And the $(\phi,  \omega, \textbf{u}, p, \textbf{B})$ is a weak solution to the model (\ref{TWO-PHASE MHD})-(\ref{2-boundary}).
\end{The}

\begin{proof}
For  $\forall (\psi,  \chi, \textbf{v}, q, \textbf{C})\in C^{\infty}(\bar{\Omega})\times C^{\infty}(\bar{\Omega})\times \textbf{C}^{\infty}_{c}(\Omega) \times (C^{\infty}_{c}(\Omega) \cap L_{0}^{2} )\times ( \textbf{C}^{\infty}(\bar{\Omega})\cap \textbf{H}_{n}^{1} )$, we choose   $(\psi_{h},  \chi_{h}, \textbf{v}_{h}, q_{h}, \textbf{C}_{h})$=$(P_{\phi h}\psi,  P_{\phi h}\chi, P_{uh}\textbf{v}, P_{ph}q, P_{Bh}\textbf{C})\in  Y_{h} \times Y_{h}\times \textbf{X}_{h}\times M_{h}\times \textbf{W}_{h} $ such that the following convergence hold. And  $ P_{ph}$ is $L^{2}$ orthogonal projection operator from $L^{2}$ to $M_{h}$.
\begin{equation*}
\begin{aligned}
&\psi_{h}\stackrel{h\rightarrow 0}{\longrightarrow}\psi \quad \rm in\, H^{1},  \quad  \chi_{h}\stackrel{h\rightarrow 0}{\longrightarrow}\chi \quad \rm in\, H^{1},\\
&\textbf{v}_{h}\stackrel{h\rightarrow 0}{\longrightarrow}\textbf{v} \quad \rm in\, \textbf{H}_{0}^{1},\quad q_{h}\stackrel{h\rightarrow 0}{\longrightarrow}q \quad \rm in\, L^{2},\quad \textbf{C}_{h}\stackrel{h\rightarrow 0}{\longrightarrow}\textbf{C} \quad \rm in\, \textbf{H}^{1}_{n}.
\end{aligned}
\end{equation*}
The equations (\ref{fully discrete scheme}) multiplying by time function $\zeta (t)\in C^{\infty}([0, T])$ and integrating  from 0 to T, we get
\begin{subequations}\label{exist equation}
\begin{align}
&\int_{0}^{T}\{( (\bar{\rho}_{h\Delta t}  \bar{\textbf{u}}_{h\Delta t} )_{t},\textbf{v}_{h}) +(\nabla\cdot (\bar{\bar{\rho}}_{h\Delta t}   \bar{\textbf{u}}_{h\Delta t}  \otimes\bar{\textbf{u}}_{h\Delta t}), \textbf{v}_{h})-(\nabla\cdot(\bar{\textbf{u}}_{h\Delta t} \otimes (\frac{\rho_{2}-\rho_{1}}{2} M(\bar{\bar{\phi}}_{h\Delta t} )\nabla\bar{\omega}_{h\Delta t} )), \textbf{v}_{h}) -\nonumber \\
&(\nabla\cdot \textbf{v}_{h},  \bar{p}_{h\Delta t} )   +2 (\eta (\bar{\bar{\phi}}_{h\Delta t})D(\bar{\textbf{u}}_{h\Delta t} ), D( \textbf{v}_{h})   )+\frac{1}{\mu}(\nabla\times \bar{\textbf{B}}_{h\Delta t} , \textbf{v}_{h}\times \bar{\bar{\textbf{B}}}_{h\Delta t} )+(\bar{\bar{\phi}}_{h\Delta t}   \nabla\bar{\omega}_{h\Delta t}, \textbf{v}_{h})\}\zeta  (t)\mbox{d}t=0, \label{3u}\\
&\int_{0}^{T} (\nabla\cdot \bar{\textbf{u}}_{h\Delta t}, q_{h}) \zeta  (t)\mbox{d}t =0,\\
& \int_{0}^{T}\{ ( (\textbf{B}_{h\Delta t})_{t}, \textbf{C}_{h}) + \frac{1}{\mu} (\frac{1}{\sigma (\bar{\bar{\phi}}_{h\Delta t} )} \nabla\times \bar{\textbf{B}}_{h\Delta t}, \nabla\times \textbf{C}_{h})+ \frac{1}{\mu}  (\frac{1}{\sigma (\bar{\bar{\phi}}_{h\Delta t} )}\nabla\cdot\bar{\textbf{B}}_{h\Delta t}, \nabla\cdot \textbf{C}_{h}) \nonumber \\
& \qquad  -(\bar{\textbf{u}}_{h\Delta t} \times \bar{\bar{\textbf{B}}}_{h\Delta t}, \nabla\times \textbf{C}_{h})\}\zeta  (t)\mbox{d}t=0, \label{Bb-1}\\
&\int_{0}^{T}\{( (\phi_{h\Delta t})_{t}, \psi_{h}) -(\bar{\bar{\phi}}_{h\Delta t} \bar{\textbf{u}}_{h\Delta t}, \nabla \psi_{h})+ (M(\bar{\bar{\phi}}_{h\Delta t})\nabla\bar{\omega}_{h\Delta t}, \nabla\psi_{h}) \}\zeta  (t)\mbox{d}t=0, \label{eqn111}\\
&\int_{0}^{T} (\bar{\omega}_{h\Delta t}, \chi_{h}) \zeta  (t)dt=\int_{0}^{T}\{\gamma\varepsilon(\nabla\bar{\phi}_{h\Delta t}, \nabla\chi_{h})+ \frac{\gamma}{\varepsilon}(\bar{f}_{h\Delta t}, \chi_{h})\}\zeta  (t)\mbox{d}t,
\end{align}
\end{subequations}
for $\bar{f}_{h\Delta t}= \bar{\phi}_{h\Delta t}^{3}-\bar{\bar{\phi}}_{h\Delta t}$. Then we need to analyze   convergence of each term of the above equations. Combining the equations (\ref{rhou}), (\ref{5-b}) and (\ref{B}), we have
\begin{equation*}
\begin{aligned}
&\int_{0}^{T} ( (\bar{\rho}_{h\Delta t}  \bar{\textbf{u}}_{h\Delta t} )_{t}, \textbf{v}_{h}) \zeta (t)\mbox{d}t \stackrel{h, \Delta t\rightarrow 0}{\longrightarrow}\int_{0}^{T} ( (\rho   \textbf{u} )_{t}, \textbf{v }) \zeta (t)\mbox{d}t,\\
&\int_{0}^{T} ( (\phi_{h\Delta t})_{t}, \psi_{h}) \zeta  (t)\mbox{d}t \stackrel{h, \Delta t\rightarrow 0}{\longrightarrow} \int_{0}^{T} (  \phi _{t}, \psi ) \zeta  (t)\mbox{d}t,\\
&\int_{0}^{T} ( (\textbf{B}_{h\Delta t})_{t}, \textbf{C}_{h}) \zeta  (t)\mbox{d}t \stackrel{h, \Delta t\rightarrow 0}{\longrightarrow} \int_{0}^{T} (\textbf{B}_{t}, \textbf{C}) \zeta  (t)\mbox{d}t.
\end{aligned}
\end{equation*}
Similar, we can conclude
\begin{equation*}
\int_{0}^{T} (\bar{\omega}_{h\Delta t}, \chi_{h}) \zeta  (t)\mbox{d}t-\int_{0}^{T} ( \omega, \chi ) \zeta  (t)\mbox{d}t\stackrel{h, \Delta t\rightarrow 0}{\longrightarrow}0.
\end{equation*}
Through the definition of weak convergence, it hold
\begin{equation*}
\begin{aligned}
&\int_{0}^{T}(\nabla\cdot\bar{\textbf{u}}_{h\Delta t}, q_{h} ) \zeta  (t)\mbox{d}t \stackrel{h, \Delta t\rightarrow 0}{\longrightarrow} \int_{0}^{T}(\nabla\cdot \textbf{u}, q ) \zeta  (t)\mbox{d}t,\\
&\int_{0}^{T}(\nabla\cdot \textbf{v}_{h\Delta t}, \bar{p}_{h\Delta t} ) \zeta  (t)\mbox{d}t\stackrel{h, \Delta t\rightarrow 0}{\longrightarrow} \int_{0}^{T}(\nabla\cdot \textbf{v}, p ) \zeta  (t)\mbox{d}t.\\
\end{aligned}
\end{equation*}


By the equations (\ref{rhog}), (\ref{t-u}), we have following estimate
\begin{equation*}
\begin{aligned}
&\int_{0}^{T} (\nabla\cdot (\bar{\bar{\rho}}_{h\Delta t}   \bar{\textbf{u}}_{h\Delta t}  \otimes\bar{\textbf{u}}_{h\Delta t}), \textbf{v}_{h})\zeta (t)\mbox{d}t- \int_{0}^{T} (\nabla\cdot ( \rho    \textbf{u} \otimes \textbf{u}), \textbf{v} )\zeta (t)\mbox{d}t\\
&\leq \left|\int_{0}^{T} ( (\bar{\bar{\rho}}_{h\Delta t}   \bar{\textbf{u}}_{h\Delta t}  \otimes\bar{\textbf{u}}_{h\Delta t}), \nabla\textbf{v}_{h})\zeta (t)\mbox{d}t- \int_{0}^{T} ( ( \rho    \textbf{u} \otimes \textbf{u}), \nabla\textbf{v} )\zeta (t)\mbox{d}t \right|\\
&\leq \|\bar{\bar{\rho}}_{h\Delta t}-\rho\|_{L^{\infty}(L^{4})}\|\bar{\textbf{u}}_{h\Delta t} \|_{L^{4}(L^{4})}^{2}\|\nabla \textbf{v}_{h}\zeta (t)\|_{L^{2}(L^{4})}\\
&\quad +\|\rho\|_{L^{\infty}(L^{4})}\|\bar{\textbf{u}}_{h\Delta t} \|_{L^{2}(L^{4})}\|\bar{\textbf{u}}_{h\Delta t}-\textbf{u} \|_{L^{2}(L^{4})}\|\nabla \textbf{v}_{h}\zeta (t)\|_{L^{\infty}(L^{4})}\\
&\quad +\|\rho\|_{L^{\infty}(L^{4})}\|\bar{\textbf{u}}_{h\Delta t}-\textbf{u} \|_{L^{2}(L^{4})}\|\textbf{u}\|_{L^{\infty}(L^{4})}\|\nabla \textbf{v}_{h}\zeta (t)\|_{L^{2}(L^{4})}\\
&\quad +\|\rho\|_{L^{2}(L^{4})}\| \textbf{u} \|_{L^{\infty}(L^{4})}^{2}\|\nabla (\textbf{v}_{h}-\textbf{v})\zeta (t)\|_{L^{2}(L^{4})}\\
&\stackrel{h, \Delta t\rightarrow 0}{\longrightarrow} 0.
\end{aligned}
\end{equation*}
Similar with above estimate, we obtain
\begin{equation*}
\begin{aligned}
&\int_{0}^{T} (\nabla\cdot(\bar{\textbf{u}}_{h\Delta t} \otimes (\frac{\rho_{2}-\rho_{1}}{2} M(\bar{\bar{\phi}}_{h\Delta t} )\nabla\bar{\omega}_{h\Delta t} )), \textbf{v}_{h})-
\int_{0}^{T} (\nabla\cdot( \textbf{u}  \otimes (\frac{\rho_{2}-\rho_{1}}{2} M( \phi )\nabla \omega  )), \textbf{v} )\stackrel{h, \Delta t\rightarrow 0}{\longrightarrow} 0.
\end{aligned}
\end{equation*}

Considering the definition of $\rho( \phi )$, $\eta ( \phi )$, $M ( \phi )$ and $ \sigma( \phi )$, they  are Lipschitz continuous functions of $\phi$ an satisfy equation (\ref{canshu}). We obtain
\begin{equation*}
\begin{aligned}
&|\eta (\bar{\bar{\phi}}_{h\Delta t})-\eta ( \phi_{h\Delta t})|\leq C_{36} |\bar{\bar{\phi}}_{h\Delta t}- \phi_{h\Delta t}|, \\
&|\sigma (\bar{\bar{\phi}}_{h\Delta t})-\sigma ( \phi_{h\Delta t})|\leq C_{37} |\bar{\bar{\phi}}_{h\Delta t}- \phi_{h\Delta t}|,\\
&|M (\bar{\bar{\phi}}_{h\Delta t})-M ( \phi_{h\Delta t})|\leq C_{38} |\bar{\bar{\phi}}_{h\Delta t}- \phi_{h\Delta t}|,
\end{aligned}
\end{equation*}
 where   $C_{36}$, $C_{37}$ and $C_{38}$ are positive constants. By the equations (\ref{theorem-h1}), (\ref{u1}) and (\ref{strongphi}), we can obtain the elliptic term as
\begin{equation*}
\begin{aligned}
&\int_{0}^{T}(\eta (\bar{\bar{\phi}}_{h\Delta t})D(\bar{\textbf{u}}_{h\Delta t} ), D( \textbf{v}_{h})  )\zeta  (t)\mbox{d}t- \int_{0}^{T}(\eta ( \phi)D(\textbf{\textbf{u}}), D( \textbf{v} )  )\zeta  (t)\mbox{d}t\\
&\leq \|\eta (\bar{\bar{\phi}}_{h\Delta t})-\eta ( \phi)\|_{L^{4}(L^{4})}\|D(\bar{\textbf{u}}_{h\Delta t} )\|_{L^{2}(L^{2})}\|D(\textbf{v}_{h}  )  \zeta (t)\|_{L^{4}(L^{4})}\\
&\quad +\eta_{2}\|D(\bar{\textbf{u}}_{h\Delta t} )\|_{L^{2}(L^{2})}\|(D(\textbf{v}_{h}  )-D(\textbf{v}   ))  \zeta (t)\|_{L^{2}(L^{2})}\\
&\quad+\left |\int_{0}^{T}(\eta (\phi)( D(\bar{\textbf{u}}_{h\Delta t})-D( \textbf{u} )), D(\textbf{v}   ) )\zeta (t)\mbox{d}t \right|\\
&\stackrel{h, \Delta t\rightarrow 0}{\longrightarrow}0.
\end{aligned}
\end{equation*}
Thus, we can obtain similarly
\begin{equation*}
\begin{aligned}
&\int_{0}^{T} (M(\bar{\bar{\phi}}_{h\Delta t})\nabla\bar{\omega}_{h\Delta t}, \nabla\psi_{h}) \zeta (t)\mbox{d}t- \int_{0}^{T} (M( \phi )\nabla \omega, \nabla\psi ) \zeta (t)\mbox{d}t\stackrel{h, \Delta t\rightarrow 0}{\longrightarrow}0,\\
&\int_{0}^{T} (\nabla\bar{\phi}_{h\Delta t}, \nabla\chi_{h}) \zeta (t)\mbox{d}t- \int_{0}^{T} (\nabla \phi, \nabla\chi ) \zeta(t)\mbox{d}t\stackrel{h, \Delta t\rightarrow 0}{\longrightarrow}0.
\end{aligned}
\end{equation*}

Similar to the derivation above, combining the weak convergence and equations (\ref{theorem-h1}), (\ref{B2}), (\ref{strongphi}), we have
\begin{equation*}
\begin{aligned}
&\int_{0}^{T}\left\{ (  \frac{1}{\mu\sigma (\bar{\bar{\phi}}_{h\Delta t} )} \nabla\times \bar{\textbf{B}}_{h\Delta t}, \nabla\times \textbf{C}_{h})+  (\frac{1}{\mu\sigma (\bar{\bar{\phi}}_{h\Delta t} )} \nabla\cdot\bar{\textbf{B}}_{h\Delta t}, \nabla\cdot \textbf{C}_{h})\right\}\zeta(t)\mbox{d}t\\
&\stackrel{h, \Delta t\rightarrow 0}{\longrightarrow}  \int_{0}^{T} \left\{ (  \frac{1}{\mu\sigma (\phi )} \nabla\times  \textbf{B}, \nabla\times \textbf{C} )+   (\frac{1}{\mu\sigma ( \phi )}\nabla\cdot \textbf{B}, \nabla\cdot \textbf{C} )\right\}\zeta(t)\mbox{d}t.
\end{aligned}
\end{equation*}
Then, from equations  (\ref{theorem-h1}), (\ref{B2}), (\ref{t-B}), we derive
\begin{equation*}
\begin{aligned}
&\int_{0}^{T}(\bar{\bar{\textbf{B}}}_{h\Delta t}\times \nabla\times \bar{\textbf{B}}_{h\Delta t} , \textbf{v}_{h} )\zeta(t)\mbox{d}t-\int_{0}^{T}(\textbf{B} \times \nabla\times  \textbf{B}, \textbf{v})\zeta(t)\mbox{d}t\\
&\leq \|\bar{\bar{\textbf{B}}}_{h\Delta t}- \textbf{B}\|_{L^{2}(L^{4})}\|\nabla\times \bar{\textbf{B}}_{h\Delta t} \|_{L^{2}(L^{2})}\|\textbf{v}_{h}\zeta(t)\|_{L^{\infty}(L^{4})}\\
&\quad +\|\textbf{B}\|_{L^{2}(L^{4})}\|\nabla\times \bar{\textbf{B}}_{h\Delta t} \|_{L^{2}(L^{2})}\|(\textbf{v}_{h}-\textbf{v})\zeta(t)\|_{L^{\infty}(L^{4})}\\
&\quad+ \left|\int_{0}^{T}(\textbf{B}\times\nabla\times (\bar{\textbf{B}}_{h\Delta t}-\textbf{B}), \textbf{v})\zeta(t)\mbox{d}t\right|\\
&\stackrel{h, \Delta t\rightarrow 0}{\longrightarrow} 0.
\end{aligned}
\end{equation*}
Similar with above process, we have
\begin{equation*}
\int_{0}^{T}(\bar{\textbf{u}}_{h\Delta t} \times \bar{\bar{\textbf{B}}}_{h\Delta t}, \nabla\times \textbf{C}_{h}) \zeta (t)\mbox{d}t-\int_{0}^{T}( \textbf{u} \times  \textbf{B}, \nabla\times \textbf{C} ) \zeta (t)\mbox{d}t\stackrel{h, \Delta t\rightarrow 0}{\longrightarrow} 0.
\end{equation*}

Considering equations (\ref{theorem-h1}), (\ref{strongphi}) and (\ref{t-u}), there is
\begin{equation*}
\begin{aligned}
&\int_{0}^{T}(\bar{\bar{\phi}}_{h\Delta t} \bar{\textbf{u}}_{h\Delta t}, \nabla \psi_{h})\zeta(t)\mbox{d}t-\int_{0}^{T}( \phi \textbf{u}, \nabla \psi )\zeta(t)\mbox{d}t\\
&\leq\left(\|\bar{\bar{\phi}}_{h\Delta t} - \phi  \|_{L^{\infty}(L^{4})} \|\bar{\textbf{u}}_{h\Delta t}\|_{L^{2}(L^{4})}  +\|\phi\|_{L^{\infty}(L^{4})}\|\bar{\textbf{u}}_{h\Delta t}-\textbf{u}\|_{L^{2}(L^{4})} \right)\|\nabla \psi_{h}\zeta(t)\|_{L^{2}(L^{2})}\\
&\quad +\|\phi\|_{L^{\infty}(L^{4})} \|\textbf{u}\|_{L^{2}(L^{4})}\|\nabla (\psi_{h}-\psi)\zeta(t)\|_{L^{2}(L^{2})}\\
&\stackrel{h, \Delta t\rightarrow 0}{\longrightarrow} 0.
\end{aligned}
\end{equation*}
Similarly, we have
\begin{equation*}
\int_{0}^{T}(\bar{\bar{\phi}}_{h\Delta t}  \nabla\bar{\omega}_{h\Delta t}, \textbf{v}_{h}) \zeta (t)\mbox{d}t-\int_{0}^{T}( \phi \nabla \omega, \textbf{v} ) \zeta(t)\mbox{d}t\stackrel{h, \Delta t\rightarrow 0}{\longrightarrow} 0.
\end{equation*}
For the trilinear terms, we have the following estimate, for  $C_{39}$ is positive constant
\begin{equation*}
\begin{aligned}
&\int_{0}^{T}(\bar{f}_{h\Delta t}, \chi_{h})\zeta(t)\mbox{d}t- \int_{0}^{T}(f(\phi), \chi)\zeta(t)\mbox{d}t\\
&\leq\left(\|\bar{\phi}_{h\Delta t}-\phi \|_{L^{\infty}(L^{4})}\|\bar{\phi}_{h\Delta t}\|_{L^{4}(L^{4})}^{2} +\|\bar{\phi}_{h\Delta t}-\phi \|_{L^{\infty}(L^{4})}\|\phi\|_{L^{4}(L^{4})}^{2} +\|\bar{\bar{\phi}}_{h\Delta t}-\phi \|_{L^{2}(L^{4})}\right) \|\chi_{h}\zeta(t)\|_{L^{2}(L^{4})}\\
&\quad +C_{39}\left(\|\phi\|_{L^{\infty}(L^{4})}^{3}+\|\phi \|_{L^{2}(L^{2})}  \right) \|(\chi_{h}-\chi)\zeta(t)\|_{L^{2}(L^{4})}\\
&\stackrel{h, \Delta t\rightarrow 0}{\longrightarrow} 0.
\end{aligned}
\end{equation*}

Then we need to prove the initial data $\phi(0)=\phi_{0}$, $\rho(0)=\rho_{0}$, $\textbf{u}(0)=\textbf{u}_{0}$, $\textbf{B}(0)=\textbf{B}_{0}$. Because of  $C^{\infty}([0, T])$ is dense in $H^{1}([0, T])$, we choose the
\begin{equation*}
\zeta (t)=
\left \{
\begin{aligned}
&1-\frac{t}{j}, \quad 0\leq t\leq j,\\
&  0, \quad j\leq t\leq T,
\end{aligned}
 \right.
\end{equation*}
in equations  (\ref{3u}), (\ref{Bb-1}) and (\ref{eqn111}). When  $h, \Delta t\rightarrow 0$, we get
\begin{subequations}
\begin{align}
&-(\rho_{0}\textbf{u}_{0}, \textbf{v })+\frac{1}{j}\int_{0}^{j}(\rho(t)\textbf{u}(t), \textbf{v})\mbox{d}t=\\
&\int_{0}^{j}\{-(\nabla\cdot (\rho   \textbf{u}  \otimes\textbf{u}), \textbf{v} )+(\nabla\cdot(\textbf{u}  \otimes (\frac{\rho_{2}-\rho_{1}}{2} M(\phi )\nabla\omega )), \textbf{v })  -2 (\eta (\phi )D(\textbf{u} ), D( \textbf{v} )   )\\
&\quad +(\nabla\cdot \textbf{v} ,  p ) -\frac{1}{\mu}(\nabla\times \textbf{B} , \textbf{v} \times \textbf{B} )-(\phi   \cdot \nabla\omega , \textbf{v} )\}\zeta(t)\mbox{d}t, \\
& -(\textbf{B}_{0}, \textbf{C})+\frac{1}{j}\int_{0}^{j}(\textbf{B}(t), \textbf{C} )\mbox{d}t=\int_{0}^{j}\{
 -\frac{1}{\mu}(  \frac{1}{\sigma (\phi )} \nabla\times \textbf{B} , \nabla\times \textbf{C} )- \frac{1}{\mu }  (\frac{1}{\sigma (\phi )} \nabla\cdot\textbf{ B} , \nabla\cdot \textbf{C} ) \\
& \quad +(\textbf{u} \times \textbf{B} , \nabla\times \textbf{C} )\}\zeta(t)\mbox{d}t, \\
&-(\phi_{0}, \psi  )+\frac{1}{j}\int_{0}^{j}(\phi (t), \psi )\mbox{d}t= \int_{0}^{j}\{(\phi \textbf{u} , \nabla \psi )-(M(\phi )\nabla\omega , \nabla\psi )\}\zeta(t)\mbox{d}t,
\end{align}
\end{subequations}
when $j\rightarrow 0$ in above equations, and see for instance \cite{1987Compact}, we obtain
\begin{equation*}
\begin{aligned}
&(\rho_{0}\textbf{u}_{0}, \textbf{v})=(\rho(0)\textbf{u}(0), \textbf{v}), \quad \forall \textbf{v}\in \textbf{H}^{1}_{0},\\
&(\textbf{B}_{0}, \textbf{C})=(\textbf{B}(0), \textbf{C}), \quad \forall C\in \textbf{H}^{1}_{n},\\
&(\phi_{0}, \psi)=(\phi(0), \psi), \quad \forall \psi \in H^{1}.
\end{aligned}
\end{equation*}

We know  that $H^{1}$ is dense in $L^{2}$, $\textbf{H}^{1}_{n}$ be dense in $\textbf{L}^{2}$, therefor it hold $\phi_{0}=\phi(0)$ and $\textbf{B}_{0}=\textbf{B}(0)$. And   $\phi_{0}=\phi(0)$ implies the  $\rho_{0}=\rho(0)$, thus the $\textbf{u}_{0}=\textbf{u}(0)$ is valid by the $\textbf{H}^{1}_{0}$ densely in $\textbf{L}^{2}$. From equation (\ref{45}),  We have
\begin{equation}
\begin{aligned}
&\frac{\rho_{h\Delta t}(t^{m+1})}{2}\|\textbf{u}_{h\Delta t}(t^{m+1})\|^{2}+\frac{1}{2\mu}\|\textbf{B}_{h\Delta t}(t^{m+1})\|^{2}+\frac{\gamma\varepsilon}{2}\|\nabla \phi_{h\Delta t}(t^{m+1})\|^{2}+\frac{\gamma\varepsilon}{4}\|(\phi_{h\Delta t}(t^{m+1}))^{2}-1\|^{2}\\
&< E(\rho_{h}^{0}, \textbf{u}_{h}^{0}, \textbf{B}_{h}^{0}, \phi_{h}^{0}).
\end{aligned}
\end{equation}

 According to lower semi-continuity of norms and (\ref{equation4}), an cluster point ($\phi$, $\rho$, $\omega$, $\textbf{u}$, $p$, $\textbf{B}$) satisfies the energy
inequality of model (\ref{model-energy}). From above analysis, the cluster point ($\phi$,  $\omega$, $\textbf{u}$, $p$, $\textbf{B}$) be a weak solution of the model (\ref{TWO-PHASE MHD})-(\ref{2-boundary}).
\end{proof}
\section{Numerical  examples}
We consider some numerical simulations to verify the presented numerical scheme (\ref{fully discrete scheme}). It should be noted that the constructed numerical algorithm is nonlinear, and we adopt Newton iterative to linearization. The implementation process can refer to \cite{DANAILA2014826, 2009Convergence, 2020ARA}. The specific  linearization of scheme (\ref{fully discrete scheme}) is
\begin{equation*}
\begin{aligned}
&\frac{\rho_{2}-\rho_{1}}{2\Delta t}(  \phi_{h}^{k+1,n+1} \textbf{u}_{h}^{k+1, n}-\phi_{h}^{k+1,n} \textbf{u}_{h}^{k+1, n},\textbf{v}_{h}) +\frac{1}{\Delta t}(\rho_{h}^{k+1,n} \textbf{u}_{h}^{k+1, n+1}-\rho_{h}^{k} \textbf{u}_{h}^{k},  \textbf{v}_{h}) +2 (\eta (\phi_{h}^{k})D(\textbf{u}_{h}^{k+1}), D( \textbf{v}_{h})   )\\
&+( (\rho_{h}^{k}  \textbf{u}_{h}^{k+1,n}\cdot\nabla)\textbf{u}_{h}^{k+1, n+1}, \textbf{v}_{h})+( (\rho_{h}^{k}  \textbf{u}_{h}^{k+1,n+1}\cdot\nabla)\textbf{u}_{h}^{k+1, n}, \textbf{v}_{h})-( (\rho_{h}^{k}  \textbf{u}_{h}^{k+1,n}\cdot\nabla)\textbf{u}_{h}^{k+1, n}, \textbf{v}_{h})\\
&-\frac{\rho_{2}-\rho_{1}}{2}( ( M(\phi_{h}^{k})\nabla\omega_{h}^{k+1,n} \cdot\nabla) \textbf{u}_{h}^{k+1,n+1}+ (M(\phi_{h}^{k})\nabla\omega_{h}^{k+1,n+1} \cdot\nabla) \textbf{u}_{h}^{k+1,n}, \textbf{v}_{h})  -(\nabla\cdot \textbf{v}_{h},  p_{h}^{k+1, n+1}) \\
& +( (\frac{\rho_{2}-\rho_{1}}{2} M(\phi_{h}^{k})\nabla\omega_{h}^{k+1,n} \cdot\nabla) \textbf{u}_{h}^{k+1,n}, \textbf{v}_{h})  +\frac{1}{\mu}(\nabla\times \textbf{B}_{h}^{k+1, n+1}, \textbf{v}_{h}\times \textbf{B}_{h}^{k})+(\phi_{h}^{k}  \cdot \nabla\omega_{h}^{k+1, n+1}, \textbf{v}_{h})=0, \\
&(\nabla\cdot\textbf{ u}_{h}^{k+1, n+1}, q_{h}) =0,\\
& ( \frac{\textbf{B}_{h}^{k+1, n+1}-\textbf{B}_{h}^{k}}{\Delta t}, \textbf{C}_{h}) + \frac{1}{\mu}(  \frac{1}{\sigma (\phi_{h}^{k})} \nabla\times \textbf{B}_{h}^{k+1, n+1}, \nabla\times \textbf{C}_{h})+ \frac{1}{\mu}  (\frac{1}{\sigma (\phi_{h}^{k})}\nabla\cdot\textbf{ B}_{h}^{k+1, n+1}, \nabla\cdot \textbf{C}_{h})  \\
& \quad  -(\textbf{u}_{h}^{k+1, n+1}\times \textbf{B}_{h}^{k}, \nabla\times \textbf{C}_{h})=0, \\
&(\frac{\phi_{h}^{k+1, n+1}-\phi_{h}^{k}}{\Delta t}, \psi_{h}) -(\phi_{h}^{k} \textbf{u}_{h}^{k+1, n+1}, \nabla \psi_{h})+ (M(\phi_{h}^{k})\nabla\omega_{h}^{k+1, n+1}, \nabla\psi_{h})=0,  \\
&(\omega_{h}^{k+1,  n+1}, \chi_{h})=\gamma\varepsilon(\nabla\phi_{h}^{k+1,  n+1}, \nabla\chi_{h})+ \frac{\gamma}{\varepsilon}( 3(\phi_{h}^{k+1, n})^{2}(\phi_{h}^{k+1, n+1}-\phi_{h}^{k+1, n}   )+ (\phi_{h}^{k+1, n})^{3}-\phi_{h}^{k}, \chi_{h}),  \\
&\textbf{u}_{h}^{0}= P_{uh}\textbf{u}_{0}, \, \,  \textbf{B}_{h}^{0}=P_{B h}\textbf{B}_{0},  \, \,\phi_{h}^{0}=P_{\phi h}\phi_{0},
\end{aligned}
\end{equation*}
where  $k$ indicate the time loop, the $n$ stand for Newton loop. The all numerical examples are based on above  Newton iterative algorithm. The velocity filed and pressure filed are discreted by Mini finite element pair ($\textbf{P}_{1}^{b}, P_{1}$), and the other variables  are discreted by  $\textbf{P}_{1}$ finite element. For simplicity, we take $\Delta t$=$O(h^{2})$ to verify both the temporal and spatial convergence orders. According to finite element theory, the expected convergence orders are as follows

\begin{equation}\label{6GU-a}
\begin{aligned}
&\|\phi(t^{n})-\phi_{h}^{n}\|\lesssim \Delta t+h^{2}\lesssim h^{2}, \,\|\nabla\phi(t^{n})-\nabla\phi_{h}^{n}\|\lesssim \Delta t+h\lesssim h,\\
& \|\textbf{u}(t^{n})-\textbf{u}_{h}^{n}\|\lesssim \Delta t+h^{2}\lesssim h^{2}, \, \|\nabla\textbf{u}(t^{n})-\nabla\textbf{u}_{h}^{n}\|+\|p(t^{n})-p_{h}^{n}\|\lesssim \Delta t+h\lesssim h,\\
& \|\textbf{B}(t^{n})-\textbf{B}_{h}^{n}\|\lesssim \Delta t+h^{2}\lesssim h^{2}, \, \|\nabla\textbf{B}(t^{n})-\nabla\textbf{B}_{h}^{n}\| \lesssim \Delta t+h\lesssim h.
\end{aligned}
\end{equation}

\subsection{Error test}
We mainly test smooth solution in solving region $\Omega$=$[0, 1]^{2}$. Given   parameters: $M_{1}$=$M_{2}$=1, $\eta_{1}$=$\eta_{2}$=1, $\mu$=1, $\sigma_{1}$=$\sigma_{2}$=1, $\varepsilon$=1, $\gamma$=1.
Choosing the following smooth solutions as
\begin{eqnarray*}
\left\{
\begin{array}{ll}
\phi= (\rm cos(t)(cos(\pi x))^2(cos(\pi y))^2,\\
\textbf{u}=\rm cos(t)(\pi sin(2\pi y) (sin(\pi x))^2 ,\ -\rm \pi sin(2\pi x) (sin(\pi y))^2 ),\\
p=\rm cos(t)(2x-2) (2y -1),\\
\textbf{B}=\rm \cos(t)(sin(\pi x)cos(\pi y),  -\rm sin(\pi y) cos(\pi x)). \\
\end{array}
\right.
\end{eqnarray*}

For simplicity, we take $\Delta t$=$O(h^{2})$ to verify both the temporal and spatial convergence orders simultaneously in  Table 1 (matched density $\rho_{2}$=$\rho_{1}$=1) and  Table 2 (the large density ratios $\frac{\rho_{2}}{\rho_{1}}$=$10^{-3}$) refer to \cite{2007DiffuseJH}.  We can observe that almost all the unknowns for the  two choosing show a good performance from the order of convergence point of view.
Both $\|\phi(t^{n})-\phi_{h}^{n}\|$, $\|\textbf{u}(t^{n})-\textbf{u}_{h}^{n}\|$ and  $\|\textbf{B}(t^{n})-\textbf{B}_{h}^{n}\|$ are expected to be $O(h^{2})$. And   $\|\nabla (\phi(t^{n})-\phi_{h}^{n})\|$, $\|\nabla (\textbf{u}(t^{n})-\textbf{u}_{h}^{n})\|$, $\|\nabla (\textbf{B}(t^{n})-\textbf{B}_{h}^{n})\|$ are expected to be $O(h)$. Noting that $\|p(t^{n})-p_{h}^{n}\|$ is $O(h^{\frac{3}{2}})$, the superconvergence  maybe influenced by the Mini finite element pair ($\textbf{P}_{1}^{b}, P_{1}$).

\begin{table}[hpt]\label{1-table1}
\tabcolsep 1.2mm {\footnotesize\textbf{Table 1:} Convergence results for numerical scheme (\ref{fully discrete scheme}) with matched densities ($\rho_{1}$=$\rho_{2}$=1).}
\begin{center}
\begin{tabular}{ccccccccccccccccccccccccccccccc}
\hline
h & $\|\phi(t^{n})-\phi_{h}^{n}\|$ & rate & $\|\nabla (\phi(t^{n})-\phi_{h}^{n})\|$ & rate & $\|\textbf{u}(t^{n})-\textbf{u}_{h}^{n}\|$ & rate & $\|\nabla (\textbf{u}(t^{n})-\textbf{u}_{h}^{n})\|$ & rate &\\ \hline
 1/8 &0.2662 & 1.48  & 0.3622  &  1.20   &0.1305 & 1.66 & 0.3022 & 0.92  & \\
 1/16 &0.0748 & 1.83 & 0.1565 & 1.21  & 0.0344& 1.92   & 0.1518  & 0.99  & \\
 1/32 & 0.0193  &1.95  & 0.0736  & 1.09  & 0.0087 & 1.99 &0.0758 & 1.00  &\\
 1/64 & 0.0049  & 1.99 & 0.0361   & 1.03  & 0.0022  & 2.00  &0.0378   & 1.00  &\\
\hline
h &$\|\textbf{B}(t^{n})-\textbf{B}_{h}^{n}\|$ & rate & $\|\nabla (\textbf{B}(t^{n})-\textbf{B}_{h}^{n})\|$ & rate & $\|p(t^{n})-p_{h}^{n}\|$ & rate & \\ \hline
  1/8 & 0.0152 & 1.58   &  0.1953 &  0.97  & 3.2777 &1.45  & \\
  1/16 & 0.0041    & 1.89   & 0.0980    & 0.99   & 1.0423 & 1.65  & \\
  1/32 & 0.0010  & 1.97   &  0.0491 &  1.00 & 0.3355  & 1.64 &\\
  1/64 & 0.0003   &2.00 & 0.0245  & 1.00  & 0.1126   & 1.57  & \\
\hline
\end{tabular}
\end{center}
\end{table}

\begin{table}[hpt]\label{1-table2}
\tabcolsep 1.2mm {\footnotesize\textbf{Table 2:} Convergence results for numerical scheme (\ref{fully discrete scheme}) with large density ratios ($\frac{\rho_{2}}{\rho_{1}}$=$10^{-3}$).}
\begin{center}
\begin{tabular}{ccccccccccccccccccccccccccccccc}
\hline
h & $\|\phi(t^{n})-\phi_{h}^{n}\|$ & rate & $\|\nabla (\phi(t^{n})-\phi_{h}^{n})\|$ & rate & $\|\textbf{u}(t^{n})-\textbf{u}_{h}^{n}\|$ & rate & $\|\nabla (\textbf{u}(t^{n})-\textbf{u}_{h}^{n})\|$ & rate &\\ \hline
 1/8 & 0.2662 & 1.48   &  0.3622   &  1.20   &0.1282  & 1.67  &  0.3019 &   0.92  & \\
 1/16 & 0.0748 & 1.83    & 0.1565  &  1.21   & 0.03370   & 1.93   & 0.1517     &   0.99  & \\
 1/32 &  0.0193 & 1.95  & 0.0736  &  1.09 & 0.0085 &  1.99   & 0.0758 &  1.00  &\\
 1/64 & 0.0049  & 1.99   &  0.0361    &  1.03  & 0.0021    & 2.00   & 0.0378   &  1.00   &\\
\hline
h &$\|\textbf{B}(t^{n})-\textbf{B}_{h}^{n}\|$ & rate & $\|\nabla (\textbf{B}(t^{n})-\textbf{B}_{h}^{n})\|$ & rate & $\|p(t^{n})-p_{h}^{n}\|$ & rate & \\ \hline
  1/8 &  0.0436  &  1.80    &  0.1953  &  0.97    &  3.3597  &  1.45  & \\
  1/16 &  0.0113    &  1.94  &  0.0980   &   0.99    & 1.0634  & 1.66   & \\
  1/32 & 0.0029   &  1.99  &  0.0491 &  1.00  &  0.3399   &  1.65 &\\
  1/64 &   0.0007  & 2.00 & 0.0245  &   1.00   & 0.1135  &  1.58   & \\
\hline
\end{tabular}
\end{center}
\end{table}
\subsection{Spinodal decomposition}
In this section, we test the algorithm energy, mass conservation of phase field and spinodal decomposition reder ro \cite{2023Energy}. Choosing the domain  $\Omega$=$[0, 1]^{2}$. The initial datas of the variable fields  can be given as
\begin{equation}\label{61}
\begin{aligned}
&\textbf{u}^{0}=0, \quad p^{0}=0,\quad  \textbf{B}^{0}=0,\quad \phi^{0}=\psi_{0}+0.001 \rm rand(r),
\end{aligned}
\end{equation}
for $\psi_{0}$=-0.05, and $\rm rand (r)$ is  a uniformly distributed random function in $[-1, 1]$ with zero mean. And the  homogeneous Dirichlet boundary conditions  for the velocity and magnetic fields,  homogeneous Neumann boundary conditions for the phase field and chemical potential. To obtain the algorithm energy and mass conservation of phase field, we give the following parameters as
\begin{equation}
\begin{aligned}
\eta_{1}=\eta_{2}=\mu=\sigma_{1}=\sigma_{2}=M_{1}=M_{2}=1,\, \varepsilon=\gamma=0.01,\, \frac{\rho_{2}}{\rho_{1}}=10^{-3}.
\end{aligned}
\end{equation}
To test the energy and mass, we set the different time step size $\Delta t$=1, 0.1, 0.01, 0.001, and space step size $h$=1/64. The results shown in Figure 1, where the algorithm energy all dissipate at different time step size shown in (a), and mass of phase field all conserve shown in (b).

In addition, we test the spinodal decomposition with the initial values in equation (\ref{61}), time step size $\Delta t$=0.0001, space step size $h$=1/150,  and the following two kinds of parameters, which the results are shown in Figures 2-3.
\begin{subequations}
\begin{align}
&\varepsilon=\gamma=0.01,\, \frac{\rho_{2}}{\rho_{1}}=10^{-3},\, \eta_{1}=\eta_{2}=\mu=\sigma_{1}=\sigma_{2}=M_{1}=M_{2}=0.001,\\
&\varepsilon=\gamma=0.01,\, \frac{\rho_{2}}{\rho_{1}}=10^{-3},\, \eta_{1}=\eta_{2}=\mu=\sigma_{1}=\sigma_{2}=1,\, M_{1}=M_{2}=1.
\end{align}
\end{subequations}
\begin{figure}
\begin{centering}
\begin{tabular}{ccc}
\includegraphics[scale=0.4 ]{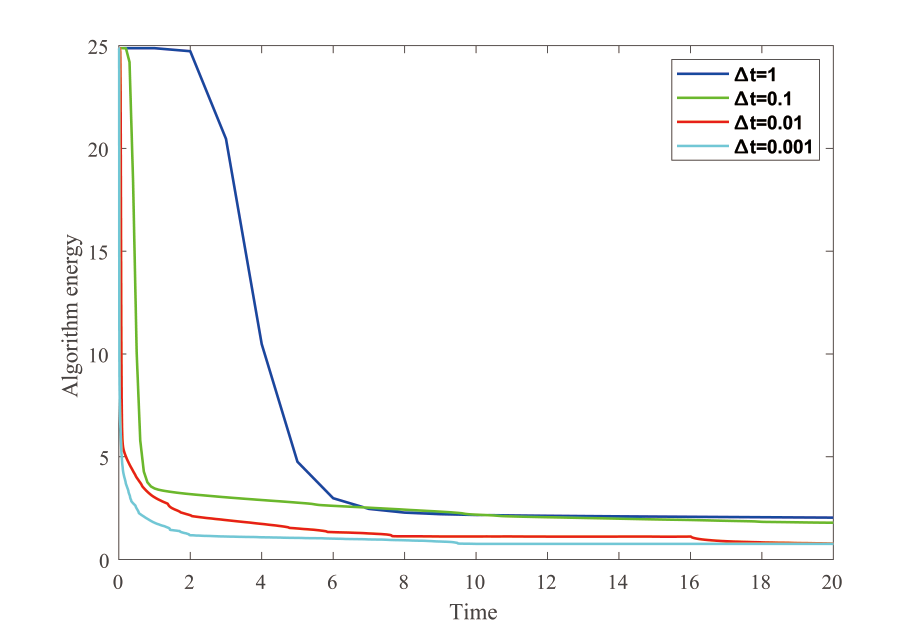}
& \includegraphics[scale=0.4 ]{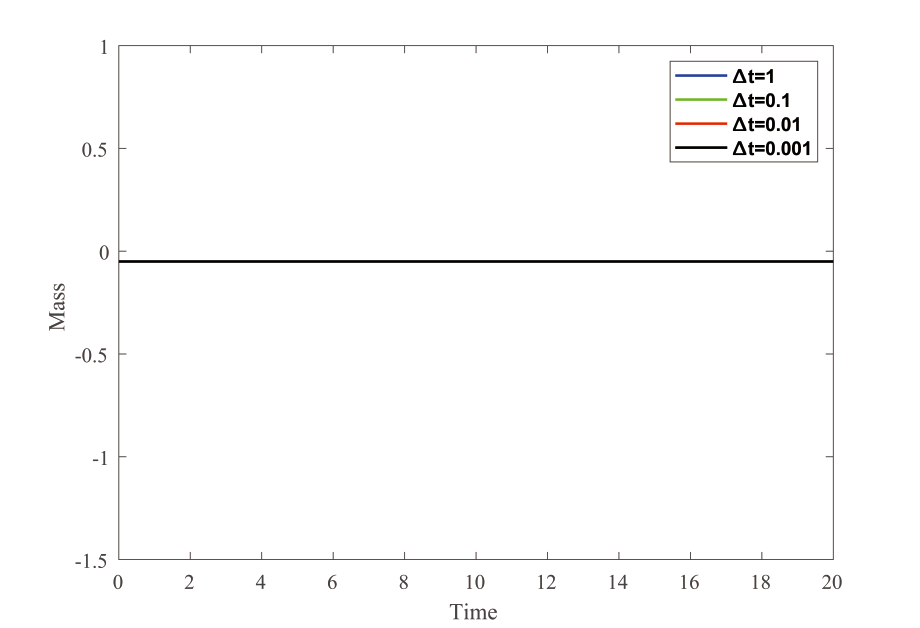} \\
\end{tabular}
\par\end{centering}
\caption{Algorithm energy (a) and  mass of phase field (b).\label{fig:Lid3D}}
\end{figure}
\begin{figure}
\begin{centering}
\begin{tabular}{cccc}
\includegraphics[scale=0.24, trim=50 50 50 50,clip]{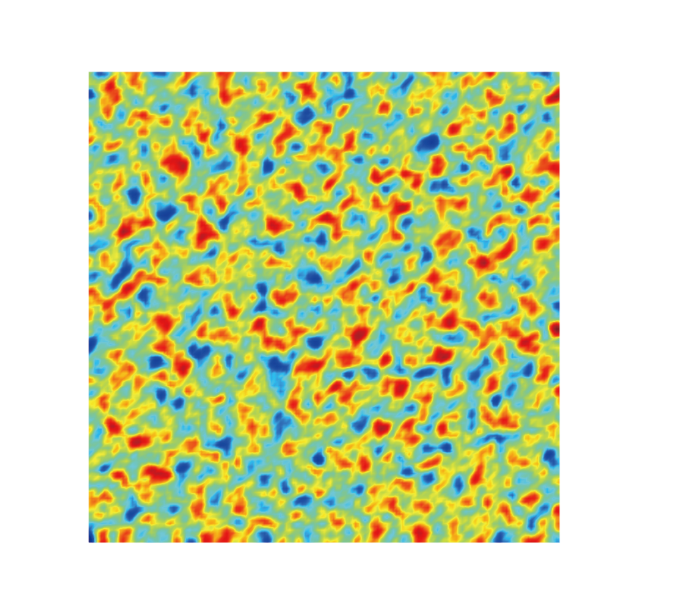}
&\includegraphics[scale=0.24, trim=50 50 50 50,clip]{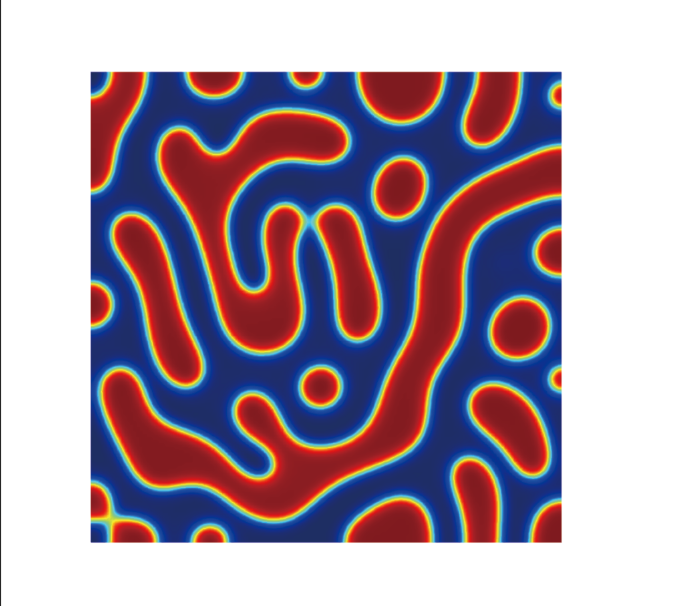}
&\includegraphics[scale=0.24, trim=50 50 50 50,clip]{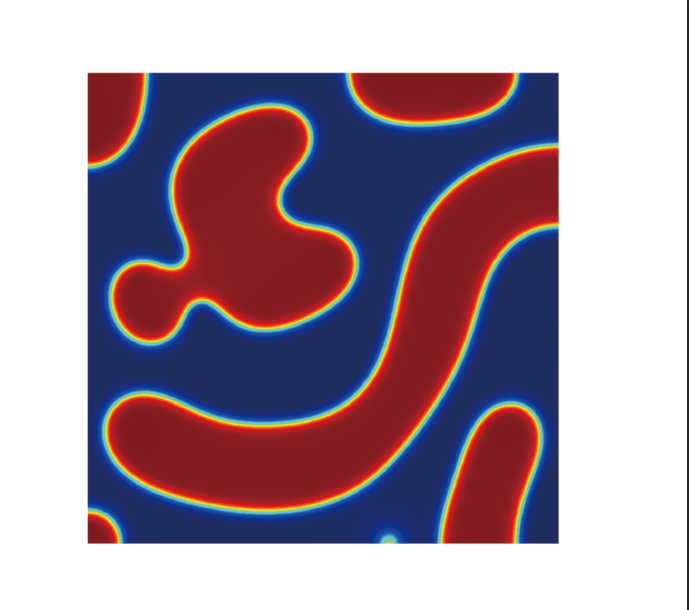}
&\includegraphics[scale=0.24, trim=50 50 50 50,clip]{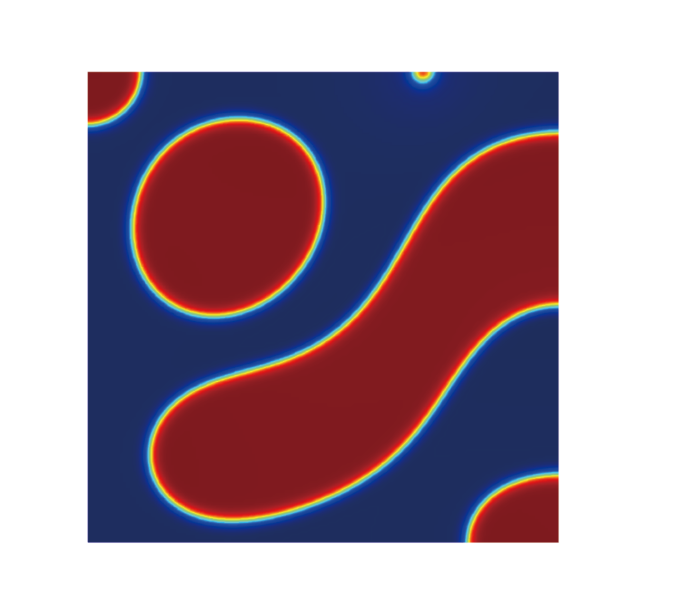}\\
\end{tabular}
\par\end{centering}
\caption{The dynamics of spinodal decomposition examples for scheme at t=0.0001 (a), 0.05 (b), 0.2 (c), 1 (d) with M=0.001.}
\end{figure}
\begin{figure}
\begin{centering}
\begin{tabular}{cccc}
\includegraphics[scale=0.24, trim=50 50 50 50,clip]{00001}
&\includegraphics[scale=0.24, trim=50 50 50 50,clip]{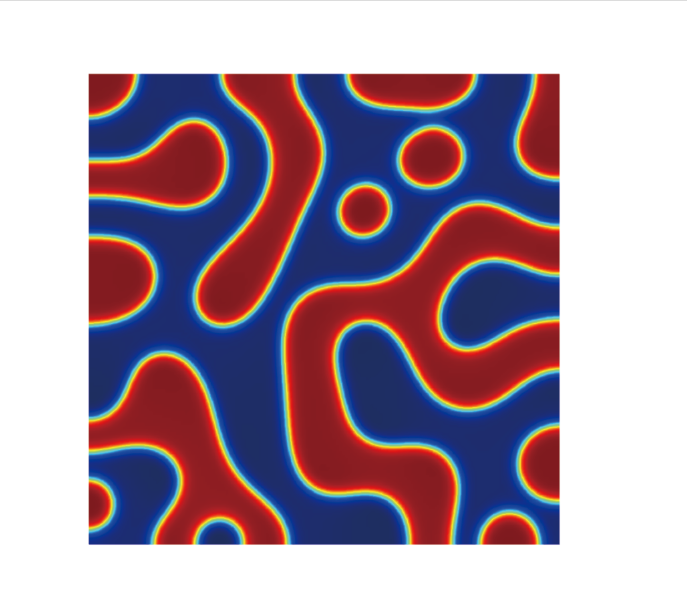}
&\includegraphics[scale=0.24, trim=50 50 50 50,clip]{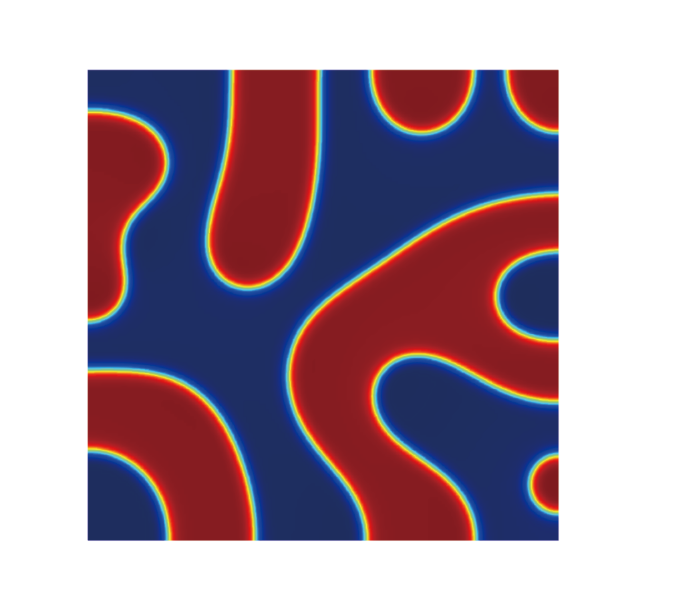}
&\includegraphics[scale=0.24, trim=50 50 50 50,clip]{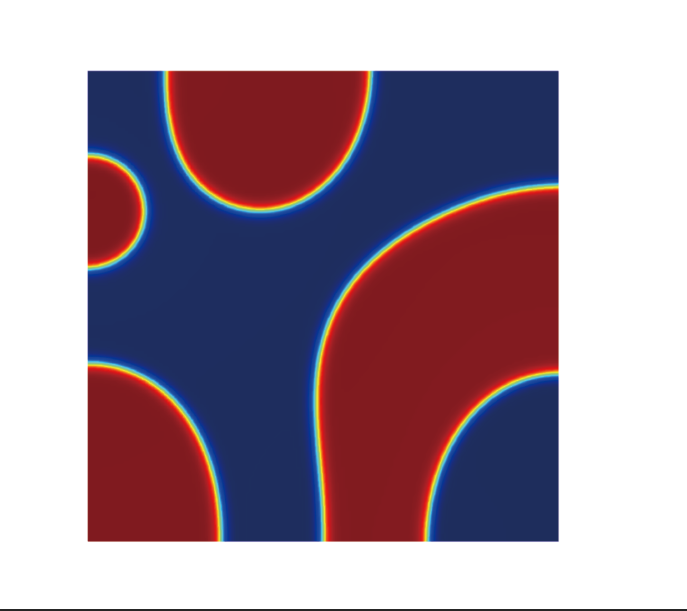}\\
\end{tabular}
\par\end{centering}
\caption{The dynamics of spinodal decomposition examples for scheme at t=0.0001 (a), 0.05 (b), 0.2 (c), 1 (d) with M=1.}
\end{figure}
When $M_{1}$=$M_{2}$=0.001, 1,  the numerical results are shown in Figure 2 and Figure 3 respectively. From Figures 2-3, we can observe the phenomena of spinodal decomposition are well behaved at different time.
\subsection{Rising bubble}
In this subsection, we test the well behavior of the model through rising bubble phenomenon with different density ratios and different radius. The gravity is introduced in the system by adding an external force in the momentum equation, that is, by adding in the right hand side of the first equation in (\ref{2-m}) a term of the form $\textbf{F}=\rho\textbf{g}$. specifically,
\begin{equation*}
\begin{aligned}
&\rho\textbf{u}_{t} +((\rho\textbf{u}-\frac{\partial\rho(\phi)}{\partial\phi} M(\phi)\nabla\omega)\cdot\nabla)\textbf{u} -2\eta (\phi)\nabla\cdot D(\textbf{u}) +\nabla p -\frac{1}{\mu}\nabla\times \textbf{B}\times \textbf{B}+\lambda \phi  \cdot \nabla\omega=\textbf{F}.
\end{aligned}
\end{equation*}
We set $\Omega$=$[0, 1]\times[0, 1.5]$, time step size $\Delta t$=0.001, space step size $h$=1/200, and the following parameters
\begin{equation*}
M_{1}=M_{2}=10^{-4},\quad \eta_{1}=\eta_{2}=1, \quad \mu=1, \quad \lambda=5, \quad \sigma_{1}=\sigma_{2}=1, \quad  \varepsilon=0.01, \quad \gamma=1,
\end{equation*}
and equipped with following boundary conditions
\begin{equation*}
\left\{
\begin{aligned}
&\frac{\partial\phi}{\partial \textbf{n}}|_{\partial\Omega}=0, \frac{\partial w}{\partial \textbf{n}}|_{\partial\Omega}=0,\\
& \textbf{u}|_{y=0,\, 1.5}=\textbf{0}, u_{1} =0 \, \rm on\,  the\, remaining\, edges,\\
&\textbf{n}\times \textbf{B}|_{\partial\Omega}=\textbf{n}\times (0, 1)^{T}|_{\partial\Omega}.
\end{aligned}
\right.
\end{equation*}

The Figure 4 and Figure 5 represent the evolution of the phase field at different radius under the above parameters respectively. Obviously,   the bubbles rise gradually over time in Figures 4-5.  And setting the parameters $\sigma_{1}=\sigma_{2}$=1000, $\mu$=0.001, imply that adding the  Lorentz forces in Figure 6. Compared with Figure 5 and Figure 6, we know that Lorentz force would suppress the buoyancy of the rising bubble, then the bubble rises slowly. In addition, we test the different density ratios in Figure 7 ($\frac{\rho_{2}}{\rho_{1}}$=100) and Figure 8 ($\frac{\rho_{2}}{\rho_{1}}$=1000), can found that the larger the density ratios, the more severe the deformation of the bubble. Bubbles gradually change from circular to flat. Note that the results of the deformation we obtained are consistent with the physical experimental results given in \cite{2017Three} and the similar results of different model is also given in \cite{2006Bubbles, 2021Fully}.
\begin{figure}
\begin{centering}
\begin{tabular}{cccc}
\includegraphics[scale=0.24, trim=50 50 50 50,clip]{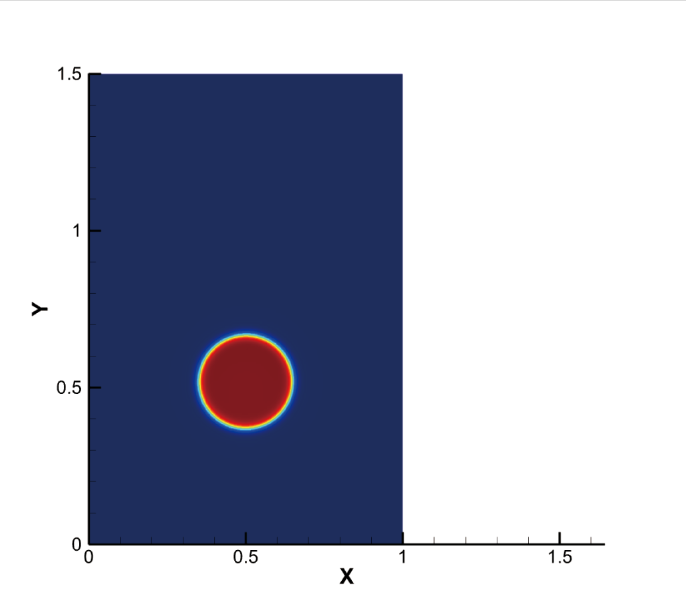}
&\includegraphics[scale=0.24, trim=50 50 50 50,clip]{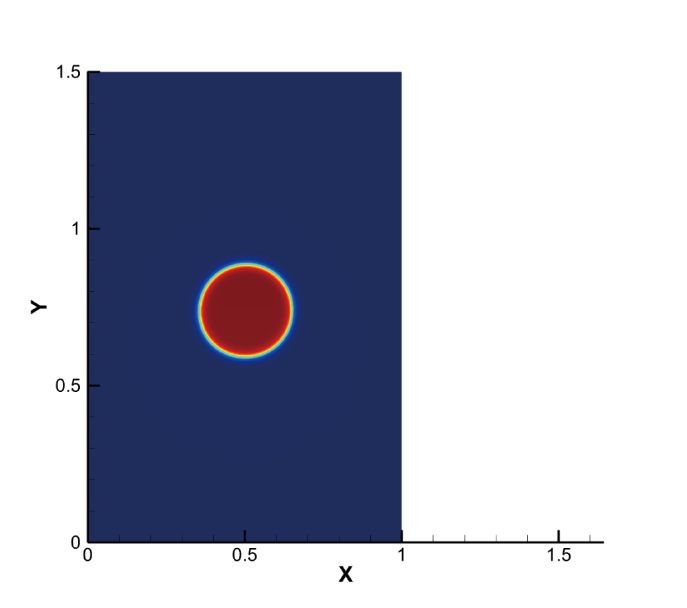}
&\includegraphics[scale=0.24, trim=50 50 50 50,clip]{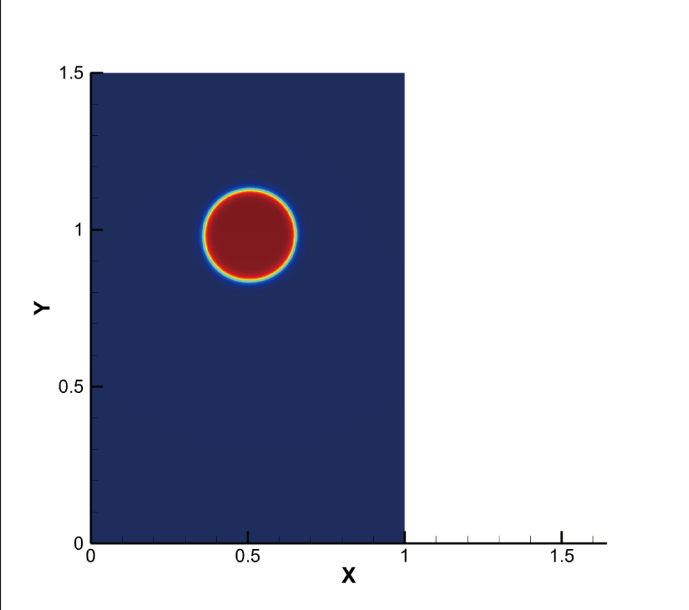}
&\includegraphics[scale=0.24, trim=50 50 50 50,clip]{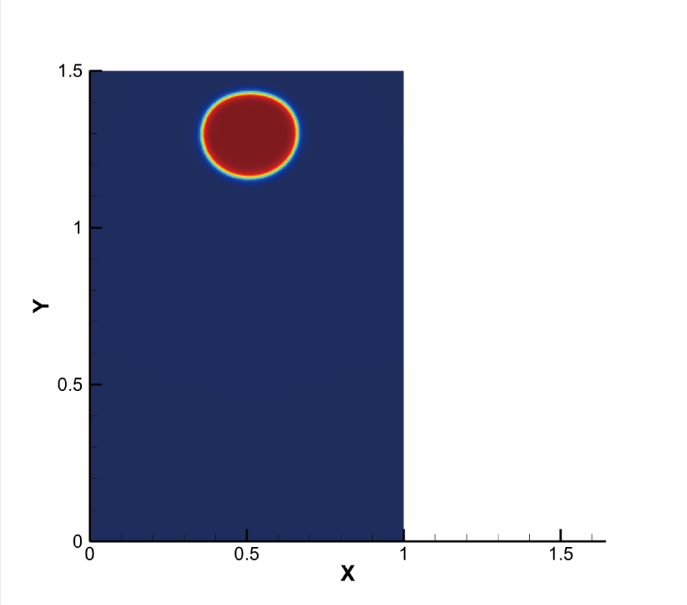}\\
\end{tabular}
\par\end{centering}
\caption{The phase evolution at t=0.1 (a), 1 (b), 2.5 (c), 3.5 (d) with R=0.15, $\frac{\rho_{2}}{\rho_{1}}$=9.}
\end{figure}
\begin{figure}
\begin{centering}
\begin{tabular}{cccc}
\includegraphics[scale=0.24, trim=50 50 50 50,clip]{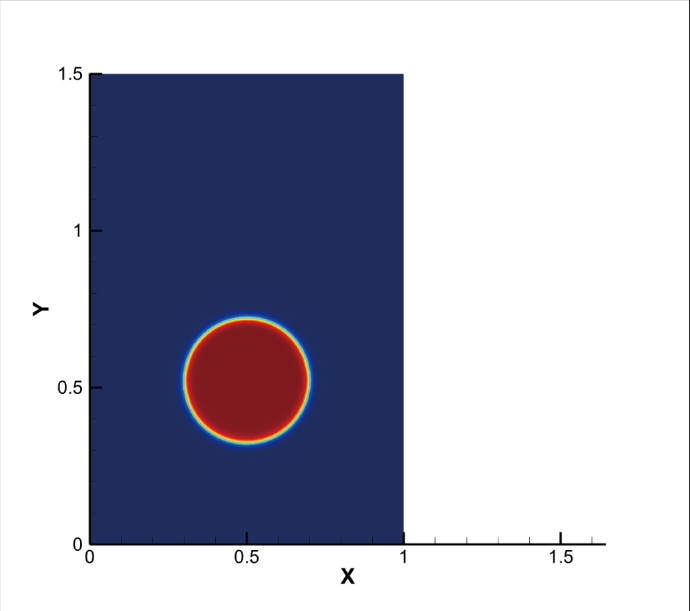}
&\includegraphics[scale=0.24, trim=50 50 50 50,clip]{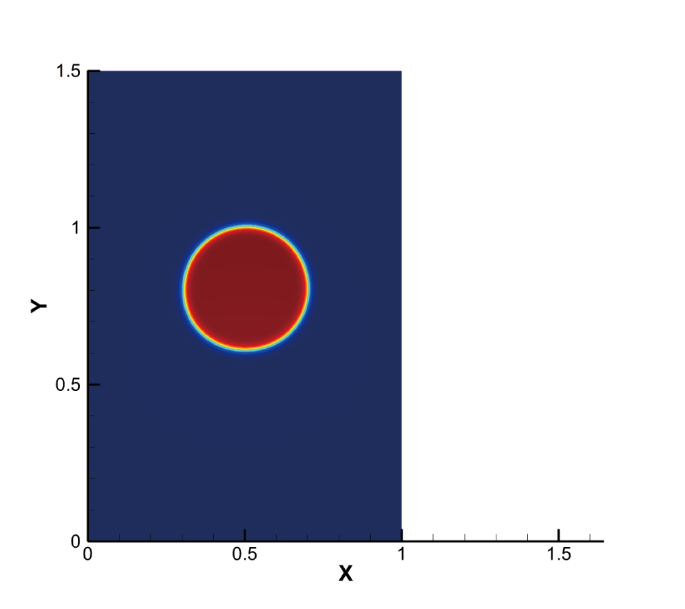}
&\includegraphics[scale=0.24, trim=50 50 50 50,clip]{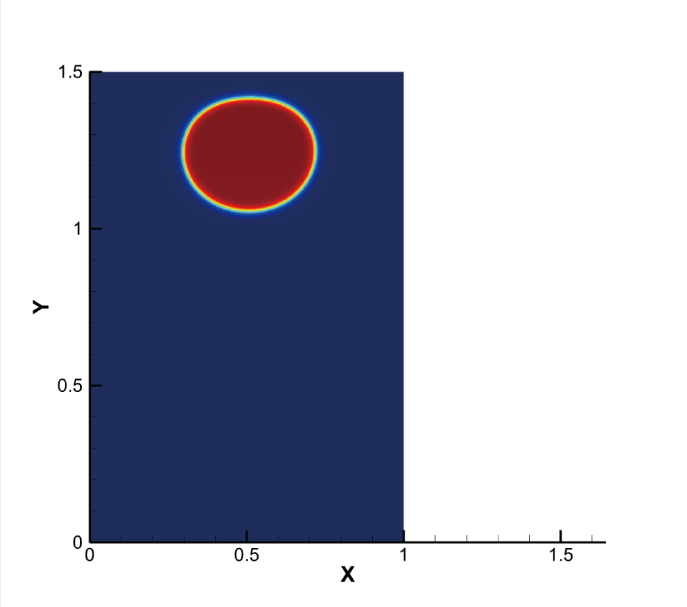}
&\includegraphics[scale=0.24, trim=50 50 50 50,clip]{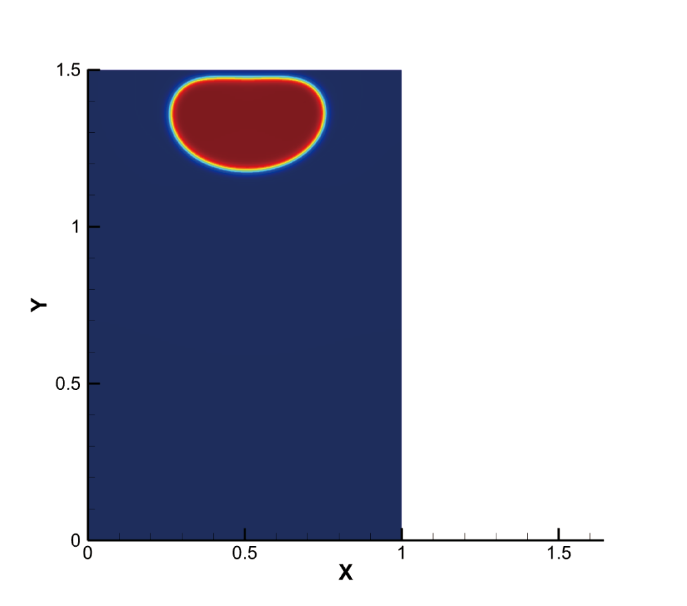}\\
\end{tabular}
\par\end{centering}
\caption{The phase evolution at t=0.1 (a), 1 (b), 2.5 (c), 3.5 (d) with R=0.2, $\frac{\rho_{2}}{\rho_{1}}$=9.}
\end{figure}
\begin{figure}
\begin{centering}
\begin{tabular}{cccc}
\includegraphics[scale=0.24, trim=50 50 50 50,clip]{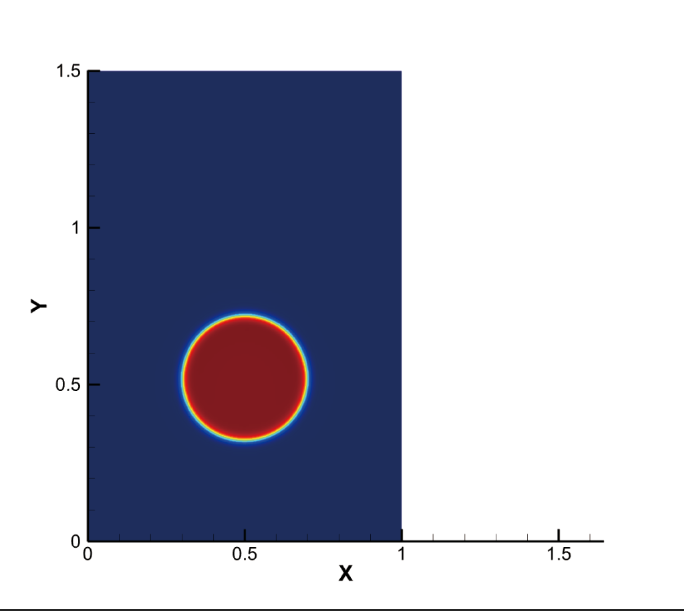}
&\includegraphics[scale=0.24, trim=50 50 50 50,clip]{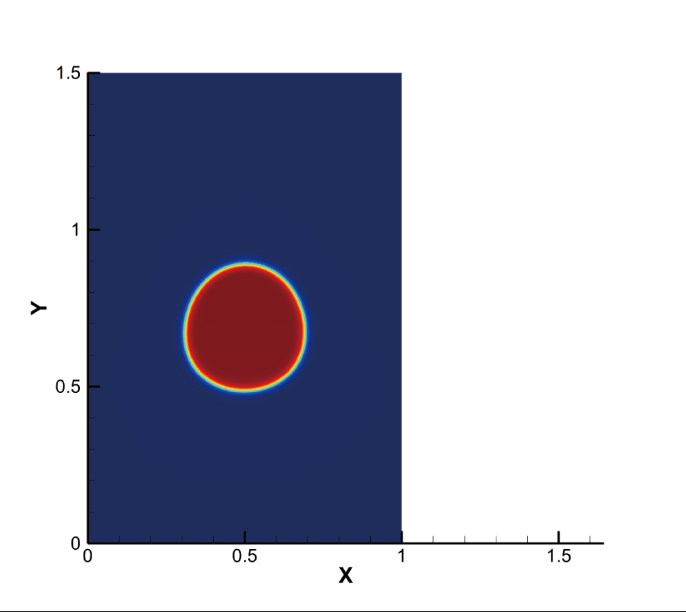}
&\includegraphics[scale=0.24, trim=50 50 50 50,clip]{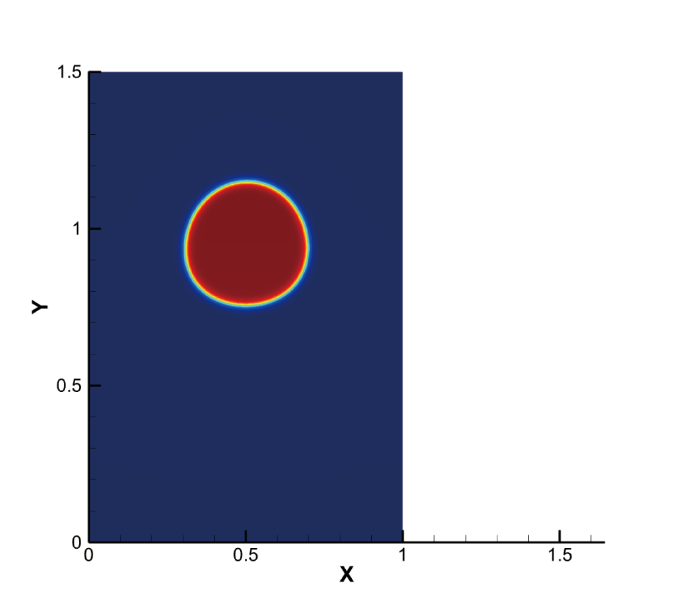}
&\includegraphics[scale=0.24, trim=50 50 50 50,clip]{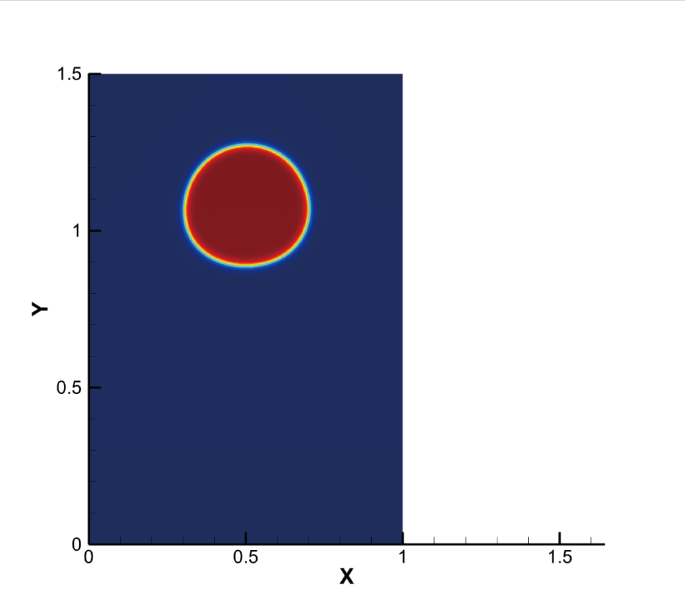}\\
\end{tabular}
\par\end{centering}
\caption{The phase evolution at t=0.1 (a), 1 (b), 2.5 (c), 3.5 (d) with R=0.2, $\frac{\rho_{2}}{\rho_{1}}$=9, $\sigma_{1}=\sigma_{2}$=1000, $\mu$=0.001.}
\end{figure}

\begin{figure}
\begin{centering}
\begin{tabular}{cccc}
\includegraphics[scale=0.24, trim=50 50 50 50,clip]{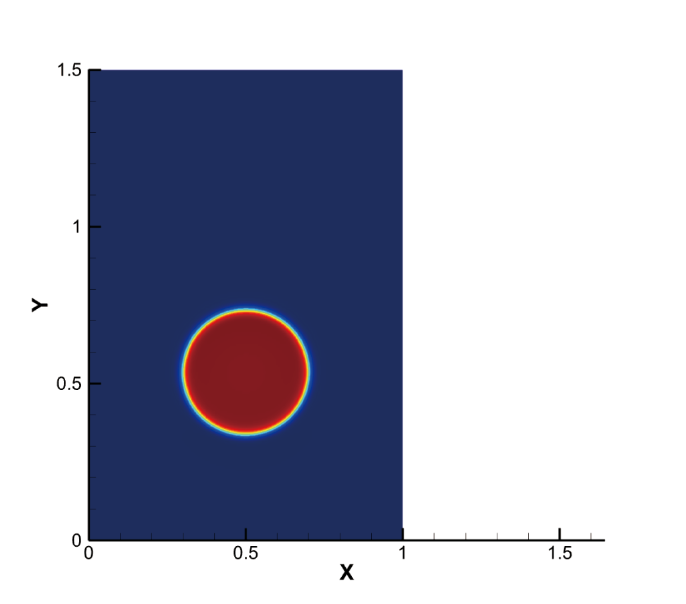}
&\includegraphics[scale=0.24, trim=50 50 50 50,clip]{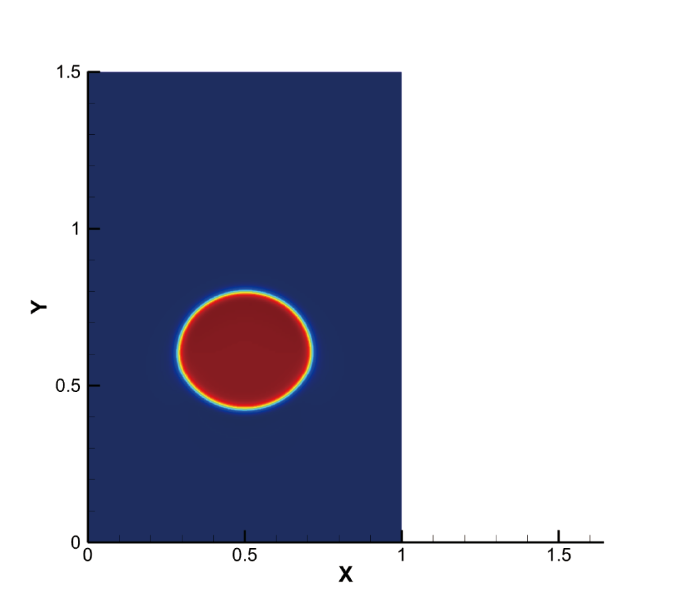}
&\includegraphics[scale=0.24, trim=50 50 50 50,clip]{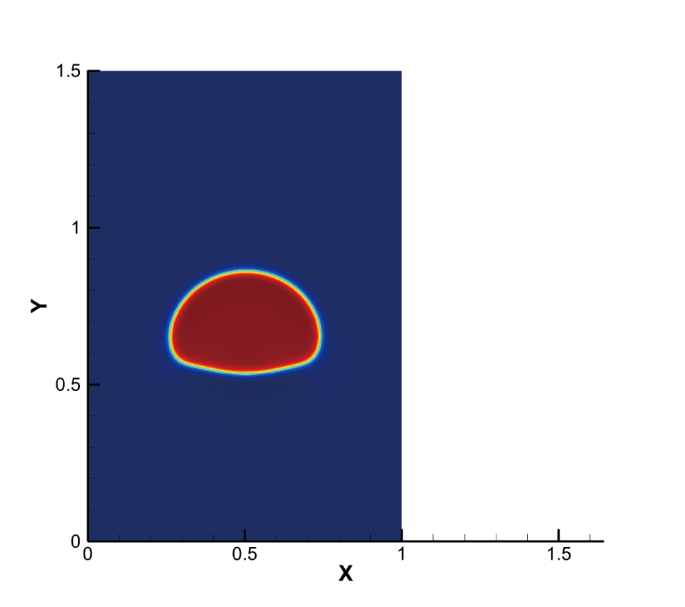}
&\includegraphics[scale=0.24, trim=50 50 50 50,clip]{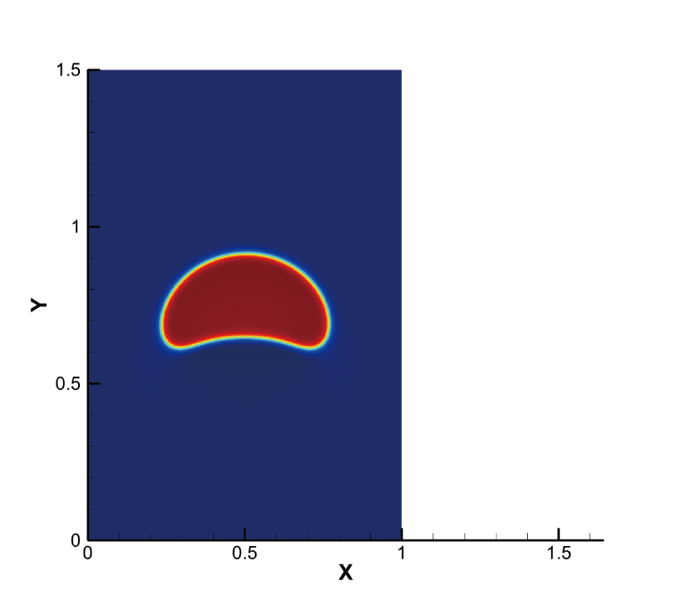}\\
\end{tabular}
\par\end{centering}
\caption{The phase evolution at t=0.1 (a), 0.2 (b), 0.3 (c), 0.4 (d) with R=0.2, $\frac{\rho_{2}}{\rho_{1}}$=100.}
\end{figure}
\begin{figure}
\begin{centering}
\begin{tabular}{cccc}
\includegraphics[scale=0.24, trim=50 50 50 50,clip]{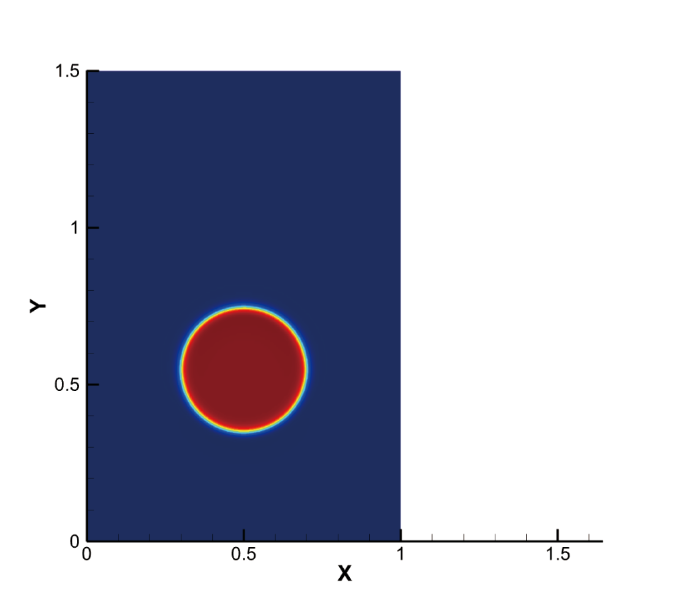}
&\includegraphics[scale=0.24, trim=50 50 50 50,clip]{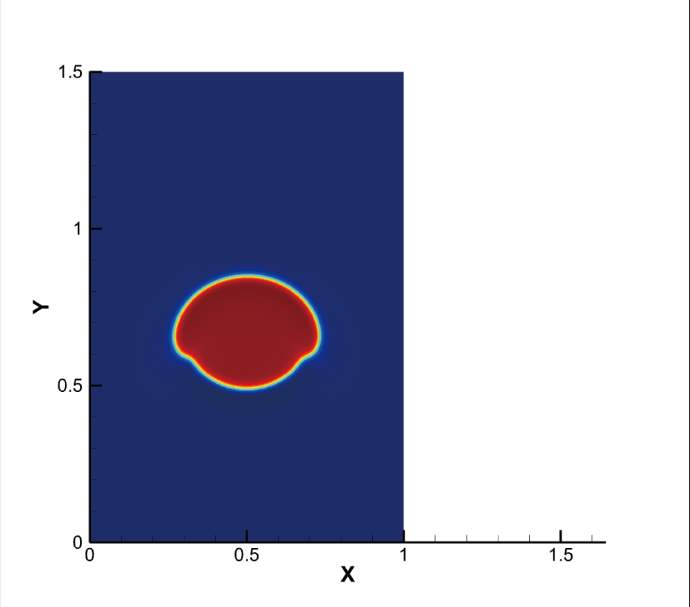}
&\includegraphics[scale=0.24, trim=50 50 50 50,clip]{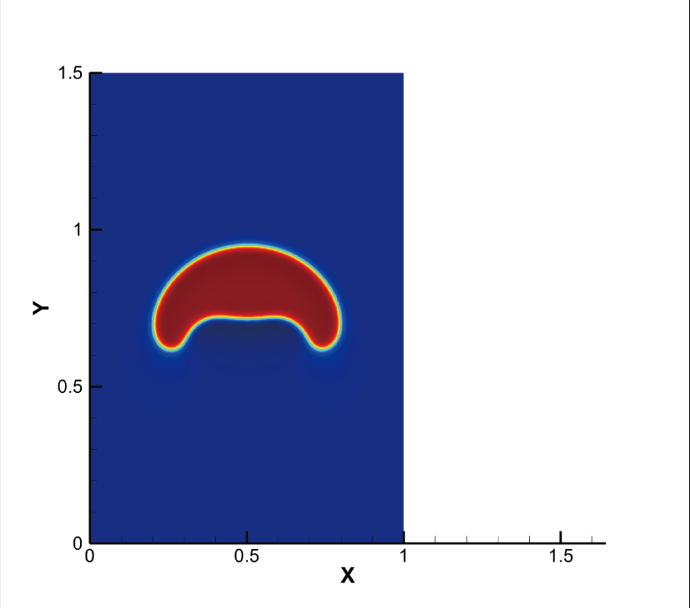}
&\includegraphics[scale=0.24, trim=50 50 50 50,clip]{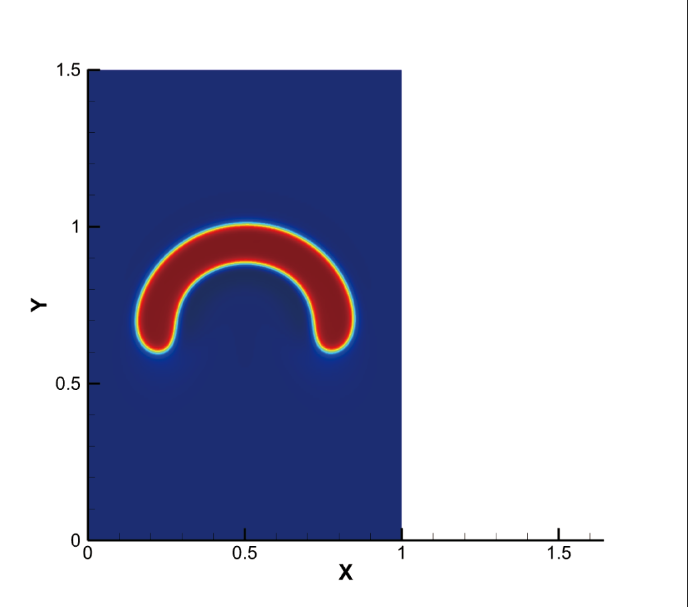}\\
\end{tabular}
\par\end{centering}
\caption{The phase evolution at t=0.1 (a), 0.2 (b), 0.3 (c), 0.4 (d) with R=0.2, $\frac{\rho_{2}}{\rho_{1}}$=1000.}
\end{figure}

\end{The}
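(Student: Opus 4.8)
The plan is to treat the discrete energy identity (\ref{energy-zong}) furnished by Theorem \ref{THEOREM-1} as the central engine, and to extract each asserted bound (\ref{theorem-fai})--(\ref{theorem-PRESSURE}) by testing the scheme (\ref{fully discrete scheme}) against carefully chosen discrete functions and summing in $k$. First I would take $\psi_h = 1$ in (\ref{eqn123}); since the convective and mobility fluxes vanish against a constant, this yields $(\phi_h^{k+1},1) = (\phi_h^k,1)$, and telescoping gives the exact mass conservation (\ref{theorem-fai}). Next, because every dissipation and time-jump term on the left of (\ref{energy-zong}) is nonnegative and the coefficients obey (\ref{canshu}) (so $\eta_h^k \ge \eta_1$, $\sigma_h^k \le \sigma_2$, $M_h^k \ge M_1$), summing (\ref{energy-zong}) from $0$ to $m$ and discarding the nonnegative jump terms yields at once the uniform energy bound (\ref{theorem-L2}), the summed dissipation bound (\ref{theorem-h1}), and the summed time-increment bounds (\ref{theorem-faichaH1})--(\ref{theorem-uchaL2}); the constants $C_{10}$--$C_{13}$ are controlled by the finite initial energy $E(\phi_h^0,\textbf{u}_h^0,\textbf{B}_h^0)$.

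The $H^1$-type bounds follow by supplementing the energy control with elementary estimates. For (\ref{theorem-faiH1}) I would use $(F(\phi_h^{k+1}),1) \ge \|\phi_h^{k+1}\|^2 - 2|\Omega|$, so that (\ref{theorem-L2}) bounds both $\|\nabla\phi_h^{k+1}\|$ and $\|\phi_h^{k+1}\|$, giving $C_{14}$; the affine dependence $\rho = \frac{\rho_2-\rho_1}{2}\phi + \frac{\rho_1+\rho_2}{2}$ then transfers this immediately to (\ref{rho-faiH1}). For the chemical-potential estimate (\ref{theorem-WH1}) I would choose $\chi_h = \Delta t\,\omega_h^{k+1}$ in (\ref{22}), apply Cauchy--Schwarz and Young's inequality to the right-hand side, control the cubic term via $\|\phi\|_{L^6}^3 \le C\|\phi\|_{H^1}^3$ using embedding and (\ref{theorem-faiH1}), and absorb the resulting $\frac{\Delta t}{2}\|\omega_h^{k+1}\|^2$ into the left side; summing and inserting the $\|\nabla\omega\|$ control from (\ref{theorem-h1}) produces the stated $C_{16}T + C_{11}/M_1$.

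For the dual-norm estimates on the time derivatives, the device is the $H^1$-stability of the $L^2$-projections (\ref{H1})--(\ref{H3}). Taking $\psi_h = P_{\phi h}\psi$ in (\ref{eqn123}) and using (\ref{H3}) bounds $(d_t\phi_h^{k+1},\psi)$ by $C_\phi(\|\phi_h^k\|_{H^1}\|\nabla\textbf{u}_h^{k+1}\| + M_2\|\nabla\omega_h^{k+1}\|)\|\nabla\psi\|$; squaring, summing, and invoking (\ref{theorem-h1}), (\ref{theorem-faiH1}), (\ref{theorem-WH1}) gives (\ref{theorem-faiCHAl2}), and the affine law again yields (\ref{theorem-rhoCHAl2}).

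The hard part is the compound estimate (\ref{theorem-PRESSURE}) for $d_t(\rho_h^{k+1}\textbf{u}_h^{k+1})$, the pressure, and $d_t\textbf{B}_h^{k+1}$, where the delicate point is matching the temporal summability exponents $\frac{12}{6+d}$ and $\frac{4}{d}$. My plan is to introduce the discrete inverse Stokes operator $I_h$ and test (\ref{equau}) with $\textbf{v}_h = I_h(d_t(\rho_h^{k+1}\textbf{u}_h^{k+1}))$, producing $\|\nabla I_h(d_t(\rho u))\|^2$ on the left and, on the right, the convection, mobility-transport, viscous, Lorentz, and capillary terms. The convective and Lorentz terms must be estimated by the Ladyzhenskaya/Gagliardo--Nirenberg inequalities ($C_3$ for $\|\textbf{u}\|_{L^3} \le C_3\|\textbf{u}\|^{(6-d)/6}\|\nabla\textbf{u}\|^{d/6}$ and $C_6$ for $\textbf{B}$), since only these distribute the $L^\infty(L^2)$ energy bound against the $L^2(H^1)$ dissipation bound so that, after raising to the power $\frac{12}{6+d}$, the factor $\|\nabla\textbf{u}\|$ appears to power exactly $2$ and the Hölder sum in time converges on $C_{11}$ and $T$. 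The equivalence $\|\textbf{u}_h\|_{(\textbf{H}_0^1)'} \le c\|\nabla I_h(\textbf{u}_h)\|$ for discretely divergence-free data then converts this into the dual-norm bound on $d_t(\rho u)$; the inf-sup condition (\ref{41}) extracts the pressure bound from the same momentum residual; and testing (\ref{B-1}) with $\textbf{C}_h = P_{Bh}\textbf{C}$, using (\ref{H2}) and the $C_4,C_7$ interpolation inequalities, yields the $\frac{4}{d}$-summable bound on $d_t\textbf{B}$. I expect the bookkeeping of these exponents --- choosing the Young splittings so that the time-summed products of $\|\nabla\textbf{u}\|^2$ (or $\|\textbf{B}\|_{\textbf{H}_n^1}^2$) against the uniformly bounded $L^2$ norms land exactly on $C_{11}$ and $T$ --- to be the principal technical obstacle.
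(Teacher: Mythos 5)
Your proposal follows essentially the same route as the paper's own proof at every stage: $\psi_h=1$ for mass conservation, summing the energy identity for the stability and increment bounds, the lower bound $(F(\phi_h^{k+1}),1)\geq \|\phi_h^{k+1}\|^{2}-2|\Omega|$ for the $H^{1}$ estimate, the test $\chi_h=\Delta t\,\omega_h^{k+1}$ for the chemical potential, the $H^{1}$-stable projections $P_{\phi h}$, $P_{Bh}$ for the dual-norm bounds, and the discrete inverse Stokes operator combined with the Gagliardo--Nirenberg constants $C_{3}$, $C_{4}$, $C_{6}$, $C_{7}$ and the inf-sup condition for the compound estimate on $d_t(\rho_h^{k+1}\textbf{u}_h^{k+1})$, $p_h^{k+1}$ and $d_t\textbf{B}_h^{k+1}$. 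The only detail you do not mention is the auxiliary elliptic regularity bound $\|\omega_h^{k+1}\|_{H^{2}}\leq C_{20}(\|\omega_h^{k+1}\|_{H^{1}}+\|f_h^{k+1}\|)$, which the paper invokes to control the factor $\|\nabla\omega_h^{k+1}\|_{L^{6}}$ arising from the mobility-transport term; otherwise the argument is the paper's.
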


\newpage
\bibliographystyle{Ieeetr}

\addcontentsline{toc}{section}{\refname}\bibliography{reference}

\end{document}